\tikzset{main node/.style={circle,draw,minimum size=0.3em,inner
sep=0.5pt}}
\tikzset{state node/.style={circle,draw,minimum size=2em,fill=blue!20,inner
sep=0pt}}
\tikzset{small node/.style={circle,draw,minimum size=0.5em,inner
sep=2pt,font=\sffamily\bfseries}}
\theoremstyle{plain}
\newtheorem{theorem}{Theorem}[section]
\newtheorem{lemma}[theorem]{Lemma}
\newtheorem{prop}[theorem]{Proposition}
\theoremstyle{definition}
\newtheorem{defn}[theorem]{Definition}
\newtheorem{exa}[theorem]{Example}
\theoremstyle{remark}
\newtheorem{rmk}[theorem]{Remark}
\numberwithin{equation}{section}
\newcommand{\al}{\alpha}
\newcommand{\be}{\beta}
\newcommand{\ep}{\varepsilon}
\newcommand{\Q}{{\mathbb Q}}
\newcommand{\R}{{\mathbb R}}
\newcommand{\Z}{{\mathbb Z}}
\newcommand{\F}{{\mathbb F}}
\newcommand{\ra}{\rightarrow}
\newcommand{\bfn}{\mathbf{n}}
\newcommand{\bfw}{\mathbf{w}}
\newcommand{\Stab}{\text{\rm Stab}}
\newcommand{\simproots}{\Pi}
\newcommand{\allroots}{\Phi}
\newcommand{\Span}{\text{\rm Span}}
\newcommand{\rootht}{\text{\rm ht}}
\newcommand{\calf}{\mathcal{F}}
\newcommand{\calr}{\mathcal{R}}
\newcommand{\bfu}{\mathbf{u}}
\newcommand{\bfx}{\mathbf{x}}
\newcommand{\bfy}{\mathbf{y}}
\newcommand{\bfz}{\mathbf{z}}
\newcommand{\six}{{\mathbf 6}}
\newcommand{\seven}{{\mathbf 7}}
\newcommand{\eight}{{\mathbf 8}}
\newcommand{\wt}{\text{\rm wt}}
\newcommand{\Alt}{\text{\rm Alt}}
\newcommand{\qpx}{\Omega_{240}}
\newcommand{\qp}{\Omega}
\newcommand{\leh}{\kappa}
\newcommand{\lamp}{\Lambda^+}
\newcommand{\slm}{\lambda^-}
\newcommand{\slp}{\lambda^+}
\newcommand{\lamm}{\Lambda^-}
\newcommand{\vsix}{\Lambda}
\newcommand{\exterior}[1]{\mathop{\mathpalette\exterior@{#1}}}
\newcommand{\exterior@}[2]{%
  \raisebox{\depth}{%
  \fontsize{\sf@size}{0}%
  \m@th
  $\ifx#1\displaystyle\textstyle\else#1\fi\bigwedge$}%
  ^{\mspace{-2mu}#2}%
  \kern-\scriptspace
}
\title{A partial order on the 240 packings of PG(3,2)}
\author{R.M. Green}
\address{Department of Mathematics, University of Colorado Boulder, Campus Box 395, Boulder, Colorado 80309-0395, USA}
\email{rmg@colorado.edu}
\date{\today}
\begin{document}

\begin{abstract}
It has long been known that the most symmetrical solutions of Kirkman's Schoolgirl Problem can be constructed
from the $240$ packings of the projective space $PG(3, 2)$, but it seems to have escaped notice that these packings 
have the structure of a partially ordered set. In this paper, we construct a shellable Bruhat-like graded partial order 
on the packings of $PG(3, 2)$ that refines the partial order on the product of four chains $[8]\times[5]\times[3]\times[2]$ 
and defines a Lehmer code on the packings. The partial order exists because the packings of $PG(3, 2)$ form a 
quasiparabolic set (in the sense of Rains--Vazirani) that is in bijective correspondence with a certain collection 
of maximal orthogonal subsets of the $E_8$ root system. The $E_8$ construction also induces transitive actions 
of the Weyl groups of type $D_n$ on the packings for $5 \leq n \leq 8$, and these actions are faithful for $n < 8$.
It is possible to define both the signed permutation action and the partial order using the combinatorics of labelled 
Fano planes.
\end{abstract}
\subjclass{05E18, 51E23, 06A07, 20F55}
\maketitle

\section{Introduction}\label{sec:intro}

Let $\F_2^4$ be a four dimensional vector space over the field $\F_2$ with two elements. The points, lines, and planes
of the smallest three dimensional projective space, $PG(3, 2)$, are the nonzero vectors of the subspaces of $\F_2^4$ of 
dimensions $1$, $2$, and $3$, respectively. The space $PG(3, 2)$ has 15 points, 35 lines, and 15 planes;
each line contains three points, and each plane contains seven points and seven lines. A {\it spread} of $PG(3, 2)$ is a 
partition of the points into five lines, and a {\it packing} of $PG(3, 2)$ is a partition of the lines into seven spreads. 
It is a classical result that each of the packings of $PG(3, 2)$ produces a solution to Kirkman's Schoolgirl 
Problem, and that the solutions with the largest possible automorphism groups all arise in this way (Remark \ref{rmk:ksp}).
The packings of $PG(3, 2)$ have been thoroughly studied from the point of view of design theory 
\cite[\S1]{mathon97overlarge} \cite[\S2]{mathon97seven} and finite geometry \cite[\S17]{hirschfeld85}. In this paper,
we will show that the packings also have an interesting structure as a partially ordered set, with points of contact
with the $E_8$ root system, combinatorial topology, Hecke algebras, and game theory.

There are $56$ spreads of $PG(3, 2)$, indexed by the 3-element subsets of the set $\eight = \{1, 2, \ldots, 8\}$, and 
there are $240$ packings, indexed by the $\eight$-labelled Fano planes, meaning the equivalence classes of labellings of
the vertices of the Fano plane by distinct elements of $\eight$. The $240$ packings form two orbits of size $120$ under 
the action of the alternating group $A_8 \cong PGL(4,2)$, and they form a single orbit under the action of $S_8$ 
\cite[Theorem 17.5.6 (ii)]{hirschfeld85}. Polster (\cite[\S5.1.3]{polster98}, \cite{polster99}) provides detailed 
illustrations and descriptions of the spreads and packings of $PG(3, 2)$. 

The main purposes of this paper are (a) to introduce and study a canonical Bruhat-like partial order on the packings,
and (b) to describe the relationship between the spreads and packings of $PG(3, 2)$ and the $E_8$ root system and 
thus to define explicit actions of Weyl groups of type $D$ on the packings. Each of these constructions can be
defined in an elementary way using the combinatorics of labelled Fano planes.

An important ingredient in these constructions is the multifaceted relationship between $\qpx$ and $\vsix = \F_2^6$
 which arises from the group isomorphism $A_8 \cong O_6^+(2)$. On the one 
hand, the set $\vsix$ is in bijection with the set $\lamp$ of all unordered partitions of $\eight$ into at 
most two sets of even size, which in turn is in bijection with the set of $64$ roots of type $E_8$ in which a certain 
simple root $\al_1$ appears with coefficient $1$. On the other hand, $\vsix$ is in bijection with $\lamm$, 
the set of partitions of $\eight$ into two subsets of odd size. Under this identification, $56$ of the elements of 
$\lamm$ correspond to the spreads of $PG(3, 2)$, and the other $8$ elements of $\lamm$ (which we call {\it basepoints}) 
correspond to the elements of $\eight$. The spreads (respectively, basepoints) are also in canonical bijection 
with the roots of type $E_8$ in which the simple root $\alpha_2$ appears with coefficient $1$ (respectively, $3$).

The $5$-dimensional projective space $PG(5, 2)$ arising from $\vsix\backslash \{0\}$ is related to $PG(3, 2)$ via 
the well-known {\it Klein correspondence}, which gives a bijection between the $35$ lines of $PG(3, 2)$ and the $35$ 
singular points of $PG(5, 2)$ with respect to a quadratic form $Q$ of Witt defect zero. There are $30$ projective 
planes in $PG(5, 2)$ that lie on the quadric, and we will prove in Theorem \ref{thm:corresp} that, for a particular 
choice of $Q$, the set of $240$ cosets $\qp^+_A$ of the corresponding $30$ totally singular solids (i.e., $3$-dimensional 
spaces) of $AG(6, 2)$ is in canonical bijection with the packings of $PG(3, 2)$. Under the identification between 
$\vsix$ and $\lamm$, each coset consists of seven spreads and one basepoint, where the seven spreads form a packing, 
and the basepoint corresponds to the unused label of a labelled Fano plane.

The identification of $\lamp$ with $\lamm$ induces a canonical bijection between $\qpx$ and the set 
$\qp_\Psi$ of $240$ maximal orthogonal subsets of $\Psi$. As described in \cite[\S6.3]{gx5},
the set $\qp_\Psi$ is a transitive quasiparabolic set in the sense of Rains--Vazirani \cite{rains13}, and this endows
$\qp_\Psi$ with a Bruhat-like graded partial order having a maximum and a minimum element. The partial order refines 
the order on the product of chains $[8]\times[5]\times[3]\times[2]$ (Theorem \ref{thm:refines}). This equips $\qpx$ with a
{\it Lehmer code} which shows that, despite their intricate structure, the packings of $PG(3,2)$ may be canonically 
labelled by a sequence of four small integers. The partial order can be defined concisely in terms of its rank function,
and this function may be defined in several equivalent ways, one of which uses the combinatorics of labelled Fano planes 
(Theorem \ref{thm:qppack}, Proposition \ref{prop:nonrec}), and two of which use the combinatorics of the $E_8$ root 
system (Remark \ref{rmk:residues}).

The set $\qp_\Psi$ supports transitive actions of the Weyl groups of types $D_5$ and $D_8$, as well as 
all the intermediate standard parabolic subgroups.
These induce canonically defined transitive actions of various groups of signed permutations on the set of
packings, where the sign changes act as translations from the $AG(6, 2)$ point of view. These actions can also be
defined solely in terms of the labelled Fano planes (Proposition \ref{prop:signact}). In the context of type $D_7$, 
we explain how the elements of $\qpx$ may be naturally indexed by $\seven$-labelled Fano planes whose edges are 
designated as positive or negative in such a way that each vertex is incident to evenly many negative edges. This 
notation makes it easier to prove (Theorem \ref{thm:faithful}) that the kernel of the action of $W(D_8)$ on the 
packings has order $2$, and that the other actions mentioned are faithful. In the context of types $D_6$ and 
$D_6 + A_1$, the elements of $\qpx$ are naturally indexed by $\six$-labelled {\it Pasch configurations} whose 
edges are designated as positive or negative in such a way that there are evenly many negative edges.

The paper is organized as follows. Section \ref{sec:prq} and Theorem \ref{thm:corresp} set up the bridges between 
three ways to parametrize the $240$ packings of $PG(3, 2)$: as translates of maximal totally singular subspaces 
of $\F_2^6$; as $\eight$-labelled Fano planes; and as certain maximal orthogonal sets of roots in the $E_8$ lattice. 
Section \ref{sec:lehmer} defines the main object of study, namely the Bruhat order on packings, and proves
(Theorem \ref{thm:refines}) that the order is a refinement of the partial order on a product of four chains
of sizes $2$, $3$, $5$, and $8$.

Section \ref{sec:signed} explains how the Weyl groups of type $D$, which are groups of signed permutations, 
act on the packings. Theorem \ref{thm:qppack} proves that the packings form a quasiparabolic set in the sense of
Rains--Vazirani, and that the Bruhat order on packings agrees with the partial order imposed by the
quasiparabolic structure.

Section \ref{sec:d7} concentrates on the action of the Weyl group of type $D_7$ on the packings and explains
how to parametrize the packings by a set of $\seven$-labelled Fano planes (or a set of $\six$-labelled Pasch
configurations) with signed edges. The main result of Section \ref{sec:d7} is Theorem \ref{thm:faithful}, 
which proves that $W(D_7)$ acts faithfully on the packings, and that $W(D_8)$ acts faithfully modulo its centre.

In Section \ref{sec:conc}, we give two combinatorial interpretations of the rank function of the Bruhat order
on packings in terms of the $E_8$ root system, and we outline how the partial order relates to combinatorial 
topology, Hecke algebras, and game theory. We also discuss the significance of the number of packings of 
$PG(3, 2)$ being the same as the number of roots of type $E_8$.

\section{Packings, root systems, and a quadratic form}\label{sec:prq}

Section \ref{sec:prq} introduces the basic definitions we need in the sequel. The main result is Theorem 
\ref{thm:corresp}, which explains how the packings of $PG(3, 2)$ may be identified in a canonical way with
certain orthogonal subsets in the $E_8$ root system, as well as with the cosets of the maximal totally singular
subspaces of a certain $6$-dimensional vector space over $\F_2$.

We write $\bfn$ for the set $\{1, 2, \ldots, n\}$, and $\binom{\bfn}{k}$ for the set of all size-$k$ subsets of 
$\bfn$. 

Let $\F_2^8 = \Span(\ep_1, \ep_2, \ldots, \ep_8)$ be an $8$-dimensional vector space over the field $\F_2$ with two 
elements, and let $V$ be the $7$-dimensional $\F_2$-vector space $\F_2^8/U$, where $U$ is the span of the all-ones 
vector. The vectors of $V$ are parametrized by the unordered partitions of $\eight$ into at most two parts, where
we identify the unordered partition $\{A, B\}$ of $\eight$ with the coset of $U$ represented by $$
v_A = \sum_{i \in A} \ep_i \text{\quad or \quad} v_B = \sum_{j \in B} \ep_j
.$$ If $A = \{a_1, a_2, \ldots, a_k\}$ then we write $v_{a_1, a_2, \ldots, a_k}$ for $v_A$. For $0 \leq i \leq 4$, 
we write $P_i$ for the set of unordered partitions of $\eight$ into two parts of size $i$ and $8-i$, and we write
$V_i$ for the subset of $V$ corresponding to $P_i$, so that we have $V = \dot\bigcup_{i = 0}^4 V_i$. Note that for 
$i = 0$, $1$, $2$, $3$, and $4$, we have $|V_i| = 1$, $8$, $28$, $56$, and $35$, respectively. The vector space 
$V = \lamp \oplus \lamm$ has a $\Z/2\Z$-grading induced by taking Hamming weight modulo $2$, where the $6$-dimensional 
subspaces $\lamp$ and $\lamm$ are defined by $\lamp = V_0 \cup V_2 \cup V_4$ and $\lamm = V_1 \cup V_3$. Every vector 
$v \in \lamm$ is uniquely of the form $v_i$ or $v_{ijk}$. For $1 \leq i \leq 8$, we define the operators 
$t_i : V \ra V$ by $t_i(v + U) = v + \ep_i + U$. It follows from the definitions that $t_i(\lamp) = \lamm$ and 
$t_i(\lamm) = \lamp$.

The well-known combinatorial model of $PG(3, 2)$ described in \cite[\S2]{hall80} and \cite[Appendix 1B]{wagner61}
gives a bijection $\tau$ from the set of $35$ lines of $PG(3, 2)$ to the set $\binom{\seven}{3}$. If $p$ is a point or a 
plane of $PG(3, 2)$, then $p$ is incident with seven lines, $l_1, l_2, \ldots, l_7$, and we identify $p$ with the subset 
$\tau(p) = \{\tau(l_1), \ldots, \tau(l_7)\}$ of $\binom{\seven}{3}$.

If $x$ and $x'$ are distinct points, then $\tau(x) \cap \tau(x') \subset \binom{\seven}{3}$ is the singleton corresponding 
to the unique line containing both $x$ and $x'$. Dually, if $y$ and $y'$ are distinct planes, then 
$\tau(y) \cap \tau(y') \subset \binom{\seven}{3}$ is the singleton corresponding to the unique line contained in both 
$y$ and $y'$.

Each point lies in seven planes, and each plane contains seven points. If the point $x$ does not lie in the plane $y$,
then $\tau(x)$ is disjoint from $\tau(y)$, whereas if $x$ lies in $y$, then $\tau(x) \cap \tau(y)$ is the $3$-element subset
of $\binom{\seven}{3}$ that corresponds to the three lines in the plane $y$ that contain the point $x$.

If $l$ and $l'$ are distinct lines, then the following are equivalent:
\begin{itemize}
\item[(i)]{$l$ and $l'$ are incident with a common point;}
\item[(ii)]{$l$ and $l'$ are incident with a common plane;}
\item[(iii)]{the set $l \cap l' \subset \seven$ is a singleton.}
\end{itemize}

\begin{exa}\label{exa:septuples}
Consider the $7$-tuples of lines $$
A = \{124, 136, 157, 235, 267, 347, 456\} \quad \text{and} \quad B = \{127, 136, 145, 235, 246, 347, 567\}
.$$ Each pair of triples in $A$ intersects in a singleton, and the same applies to the triples in $B$. This implies that 
the triples appearing in $A$ (or in $B$) either represent the seven lines incident with a given point, or the seven lines 
incident with a given plane. Since $A \cap B = \{136, 235, 347\}$ is nonempty and not a singleton, we conclude that $A$ 
represents a point that lies in the plane $B$, or vice versa, and that there are precisely three lines incident with both 
$A$ and $B$, namely $136$, $235$, and $347$.
\end{exa}

There is a bijection $\iota : P_4 \rightarrow \binom{\seven}{3}$ given by 
$\iota(v_{ijk8}) = \{i, j, k\}$. For example, we have $$
\iota(\{\{1, 4, 6, 7\}, \{2, 3, 5, 8\}\}) = \iota(v_{2358}) = \{2, 3, 5\}
,$$ and $$
\iota(\{\{1, 4, 6, 8\}, \{2, 3, 5, 7\}\}) = \iota(v_{1468}) = \{1, 4, 6\}
.$$

A {\it spread} of $PG(3, 2)$ is a partition of the $15$ points into five lines. The $56$ spreads of $PG(3, 2)$ are parametrized 
by the set $\binom{\eight}{3}$, where the spread indexed by the set $A = \{i, j, k\} \subset \eight$ corresponds to
the $5$-element subset $\sigma(A) \subset \binom{\seven}{3}$ given by $$
\sigma(A) = \left\{\iota(v_T) : A \subset T \in \binom{\eight}{4}\right\}
.$$ Such a set $\sigma(A)$ is a spread because any two distinct elements $z, z' \in \sigma(A)$ intersect in $0$ or $2$ 
elements, which implies that $z$ and $z'$ correspond to non-intersecting lines. In particular, if $8 \not\in A$ then 
$\sigma(A)$ consists of $A$ itself together with the four $3$-element subsets of $\seven \backslash A$. If, on the other 
hand, we have $A = \{i, j, 8\}$, then $\sigma(A)$ consists of the five elements of $\binom{\seven}{3}$ that contain 
both $i$ and $j$. 

\begin{exa}\label{exa:spreads}
The five lines in the spread indexed by the triple $368$ are $$
\{136, 236, 346, 356, 367\}
,$$ and the five lines in the spread indexed by the triple $257$ are $$
\{134, 136, 146, 257, 346\}
.$$
\end{exa}

A {\it packing} of $PG(3, 2)$ is a partition of the $35$ lines into seven spreads. The spreads indexed by the sets 
$A, B \subset \eight$ consist of disjoint sets of lines if and only if $A \cap B \subset \eight$ is a singleton, so a 
packing is indexed by a collection of seven elements of $\binom{\eight}{3}$, any two of which intersect in a singleton. 
The data that defines a packing may be concisely represented as a labelling of vertices of the Fano plane $PG(2, 2)$ 
by elements of $\eight$, up to automorphisms of $PG(2, 2)$. The automorphism group of $PG(2, 2)$ is $PGL(3, 2)$, which 
has order $168$, thus giving rise to $8!/168=240$ distinct packings. It is known that all spreads and all packings 
of $PG(3, 2)$ arise from the constructions just described (see \cite[4.2]{hall80} and \cite{magaud22}).

\begin{defn}\label{def:slm}
Given an $\eight$-labelled Fano plane $x$, we define $\slm(x) \in \lamm$ to be the set consisting of
the seven elements $v_{ijk}$ for each spread $ijk$ in $x$ together with the basepoint $v_l$, where
$l$ is the unused label in the Fano plane. 
\end{defn}

The $\eight$-labelled Fano planes $x_0$ and $x_1$ are depicted in Figure \ref{fig:fano1}. It will turn out that 
$x_0$ (respectively, $x_1$) indexes the minimum (respectively, maximum) element in the partial order on the packings 
of $PG(3, 2)$. In the notation of Definition \ref{def:slm}, we have $$
\slm(x_0) = \{ v_{127},
v_{136},
v_{145},
v_{235},
v_{246},
v_{347},
v_{567},
v_{8}\}
$$ and $$
\slm(x_1) = \{v_{234},
v_{256},
v_{278},
v_{357},
v_{368},
v_{458},
v_{467},
v_{1}\}
.$$

\begin{figure}
\begin{center}
\subfloat[$x_0$]{
\begin{tikzpicture}[
mydot/.style={
  draw,
  circle,
  fill=black,
  inner sep=1.5pt}
]
\draw
  (0,0) coordinate (A) --
  (3,0) coordinate (B) --
  ($ (A)!.5!(B) ! {sin(60)*2} ! 90:(B) $) coordinate (C) -- cycle;
\coordinate (O) at
  (barycentric cs:A=1,B=1,C=1);
\draw (O) circle [radius=3*1.717/6];
\draw (C) -- ($ (A)!.5!(B) $) coordinate (LC); 
\draw (A) -- ($ (B)!.5!(C) $) coordinate (LA); 
\draw (B) -- ($ (C)!.5!(A) $) coordinate (LB); 
\foreach \Nodo in {A,B,C,O,LC,LA,LB}
  \node[mydot] at (\Nodo) {};    
  \node [left=0.1cm of A] {$5$};
  \node [right=0.1cm of B] {$6$};
  \node [above=0.1cm of C] {$2$};
  \node [right=0.1cm of LA] {$4$};
  \node [left=0.1cm of LB] {$3$};
  \node [below=0.1cm of LC] {$7$};
  \node [above right=0.1cm and 0.001cm of O] {$1$};
\end{tikzpicture}}\quad\quad\quad\quad
\subfloat[$x_1$]{
\begin{tikzpicture}[
mydot/.style={
  draw,
  circle,
  fill=black,
  inner sep=1.5pt}
]
\draw
  (0,0) coordinate (A) --
  (3,0) coordinate (B) --
  ($ (A)!.5!(B) ! {sin(60)*2} ! 90:(B) $) coordinate (C) -- cycle;
\coordinate (O) at
  (barycentric cs:A=1,B=1,C=1);
\draw (O) circle [radius=3*1.717/6];
\draw (C) -- ($ (A)!.5!(B) $) coordinate (LC); 
\draw (A) -- ($ (B)!.5!(C) $) coordinate (LA); 
\draw (B) -- ($ (C)!.5!(A) $) coordinate (LB); 
\foreach \Nodo in {A,B,C,O,LC,LA,LB}
  \node[mydot] at (\Nodo) {};    
  \node [left=0.1cm of A] {$7$};
  \node [right=0.1cm of B] {$6$};
  \node [above=0.1cm of C] {$3$};
  \node [right=0.1cm of LA] {$8$};
  \node [left=0.1cm of LB] {$5$};
  \node [below=0.1cm of LC] {$4$};
  \node [above right=0.1cm and 0.001cm of O] {$2$};
\end{tikzpicture}}
       \caption{The inequivalent labellings of the Fano plane corresponding to $x_0$ and $x_1$}
\label{fig:fano1}
\end{center}
\end{figure}
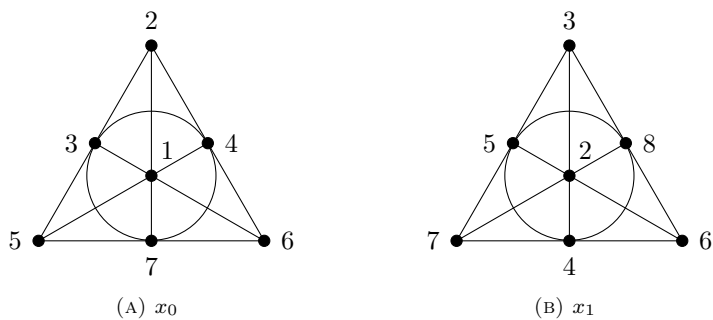

\begin{rmk}\label{rmk:ksp}
In {\it Kirkman's Schoolgirl Problem}, proposed by T.P. Kirkman in 1850, fifteen girls go on a walk in five groups 
of three on seven consecutive days. The problem is to schedule the walks in such a way that each pair of girls walks 
together precisely once. 

It is a classical result \cite{conwell1910} that each packing of $PG(3, 2)$ provides a solution to the problem,
where the $15$ points are the girls, the seven spreads are the days, and the five lines in a spread create a partition 
of the $15$ girls into five sets of three. This solves the problem because any two points of $PG(3, 2)$ lie on a 
unique line, which in turn lies in a unique spread.

Up to permutations of the girls and the days, this method produces all the solutions to Kirkman's Schoolgirl Problem 
having the largest possible automorphism group, namely the group $PGL(3, 2)$ of order $168$ \cite{cole1922}. 
\end{rmk}

\begin{defn}\label{def:e8}
The {\it $E_8$ lattice} $\Gamma_8$ is the rank $8$ free abelian subgroup of $\R^8$ consisting of the points $$
\left\{ (x_i) \in \Z^8 \cup \left( \Z + \tfrac{1}{2} \right)^8 : \ \sum_i x_i = 0 \mod 2\right\}
.$$ The set of {\it roots}, $\allroots$, of type $E_8$ is the set of $240$ elements of $\Gamma_8$ of norm $2$.
Let $\{e_1, e_2, \ldots, e_8\}$ be an orthonormal basis for $\R^8$. There are $112$ roots of the form
$\pm e_i \pm e_j$ where $1 \leq i < j \leq 8$ and the signs are chosen independently. The other $128$ roots are
of the form $\sum_{i = 1}^8 \pm \tfrac{1}{2}e_i$, where the signs are chosen so that there are evenly many minus 
signs. 
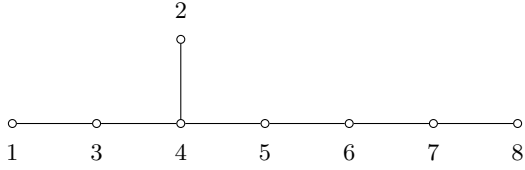
\begin{figure}
\begin{tikzpicture}
    \node[main node] (1) {};
            \node[main node] (3) [right=1cm of 1] {};
            \node[main node] (4) [right=1cm of 3] {};
            \node[main node] (5) [right=1cm of 4] {};
            \node[main node] (6) [right=1cm of 5] {};
            \node[main node] (7) [right=1cm of 6] {};
            \node[main node] (8) [right=1cm of 7] {};
            \node[main node] (2) [above=1cm of 4] {};

            \path[draw]
            (1)--(3)--(4)--(5)--(6)--(7)--(8)
            (2)--(4);

            \node (11) [below=0.1cm of 1] {\small{$1$}};
            \node (22) [above=0.1cm of 2] {\small{$2$}};
            \node (33) [below=0.1cm of 3] {\small{$3$}};
            \node (44) [below=0.1cm of 4] {\small{$4$}};
            \node (55) [below=0.1cm of 5] {\small{$5$}};
            \node (66) [below=0.1cm of 6] {\small{$6$}};
            \node (77) [below=0.1cm of 7] {\small{$7$}};
            \node (88) [below=0.1cm of 8] {\small{$8$}};

\end{tikzpicture}
\caption{Dynkin diagram of type $E_8$}
\label{fig:e8}
\end{figure}

Following \cite[\S2]{humphreys90}, 
we define the set of {\it simple roots}, $\simproots$, of type $E_8$ to be the vectors $\al_1, \ldots, \al_8$ by $$
\al_1 = \tfrac{1}{2}e_1 - \tfrac{1}{2}e_2 - \tfrac{1}{2}e_3 - \tfrac{1}{2}e_4 - \tfrac{1}{2}e_5 - \tfrac{1}{2}e_6 - 
\tfrac{1}{2}e_7 + \tfrac{1}{2}e_8 ,$$ $\al_2 = e_1 + e_2$, and $\al_i = e_{i-1} - e_{i-2}$ for all $3 \leq i \leq 8$.
Every root is an integer linear combination of simple roots with coefficients of like sign, and the root is called
{\it positive} (respectively, {\it negative}) if the coefficients are all nonnegative (respectively, nonpositive).
There is a partial order on roots in which $\al \leq \be$ if $\be - \al$ is a nonnegative linear combination of 
simple roots. The unique maximal element in this partial order is the highest root $\theta = e_7 + e_8$.
Each simple root $\al_i$ defines a {\it simple reflection} $s_{\al_i}$, which acts on the $E_8$-lattice by reflection in the
simple root $\al_i$. The {\it Weyl group} $W(E_8)$ is the group generated by the set $S$ of simple reflections.
We define $\allroots_{c, d} \subset \allroots$ to be the set of roots in which $\alpha_c$ appears with coefficient $d$.
\end{defn}

\begin{lemma}\label{lem:e8}
Suppose that $\al$ is a root such that $$
\al = \sum_i c_i \al_i = \sum_i d_i e_i
.$$ 
\begin{itemize}
\item[{\rm (i)}]{If $k$ is maximal such that $d_k$ is nonzero, then $\al$ is positive if and only if $d_k$ is positive.}
\item[{\rm (ii)}]{We have $2d_8 = c_1$.}
\item[{\rm (iii)}]{There are $14$ roots for which $c_1 = 2$, namely $\ep_8 \pm \ep_i$ for both choices of sign and all
$i$ such that $1 \leq i \leq 7$.}
\item[{\rm (iv)}]{There are $64$ roots for which $c_1 = 1$, namely those of the form $$
\tfrac{1}{2}e_8 + \sum_{i = 1}^7 \pm \tfrac{1}{2}e_i
,$$ where the signs are chosen so that there are evenly many minus signs.}
\end{itemize}
\end{lemma}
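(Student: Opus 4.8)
The plan is to treat the four parts in the order (ii), (iii), (iv), (i), since the first three are short computations that feed into one another, while (i) is best handled by a separate structural argument.

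For (ii), I would simply read off the coefficient of $e_8$ in the expression $\al = \sum_i c_i \al_i$. Among the simple roots, only $\al_1$ has a nonzero $e_8$-component, namely $+\tfrac{1}{2}$: the root $\al_2 = e_1 + e_2$ involves no $e_8$, and each $\al_i = e_{i-1} - e_{i-2}$ for $3 \le i \le 8$ has largest index $7$ (attained by $\al_8 = e_7 - e_6$). Hence $d_8 = \tfrac{1}{2} c_1$, which is exactly the assertion $2 d_8 = c_1$.

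Parts (iii) and (iv) then follow by combining (ii) with the explicit description of the $240$ roots in Definition \ref{def:e8}. Every root is either of the form $\pm e_i \pm e_j$ (so $d_8 \in \{-1, 0, 1\}$) or of the form $\tfrac{1}{2}\sum_i \pm e_i$ (so $d_8 \in \{-\tfrac{1}{2}, \tfrac{1}{2}\}$), so the value $c_1 = 2 d_8$ immediately restricts which roots can occur. For (iii), $c_1 = 2$ forces $d_8 = 1$, which can only happen for an integer root with $e_8$-coefficient $+1$; writing the other nonzero coordinate as $\pm e_i$ with $1 \le i \le 7$ gives exactly the $14$ roots $e_8 \pm e_i$. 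For (iv), $c_1 = 1$ forces $d_8 = \tfrac{1}{2}$, so the root must be a half-integer root whose $e_8$-sign is $+$; the parity condition (evenly many minus signs) then forces an even number of minus signs among $e_1, \dots, e_7$, and counting sign patterns on seven coordinates with an even number of minus signs gives $2^6 = 64$ roots. I would also record here the useful consequence that a root with $c_1 > 0$ cannot be negative and is therefore positive.

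For (i), the plan is to show that the positive system determined by $\simproots$ coincides with the lexicographic positive system for the ordering $e_8 \succ e_7 \succ \cdots \succ e_1$. Concretely, call a nonzero vector $v = \sum_i d_i e_i$ \emph{lex-positive} if $d_k > 0$, where $k = \max\{ i : d_i \neq 0\}$, and lex-negative otherwise. The set of lex-positive vectors is closed under addition and under positive scaling, and every nonzero root is lex-positive or lex-negative, with negation interchanging the two classes. It therefore suffices to check that each simple root is lex-positive, which is immediate: the top coordinate of $\al_1$ is $d_8 = +\tfrac{1}{2}$, while the top coordinate of each of $\al_2, \dots, \al_8$ equals $+1$. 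Granting this, every positive root---being a nonnegative (not identically zero) integer combination of simple roots---is a sum of lex-positive vectors, hence lex-positive, so its leading coordinate $d_k$ is positive; applying the same argument to $-\al$ handles the negative roots, and the dichotomy ``positive or negative'' gives the stated equivalence. The only real subtlety, and the step I would be most careful about, is the closure-under-addition property: one must check that adding two lex-positive vectors cannot push the surviving top index into a negative coefficient. This is settled by splitting into the cases where the two top indices differ and where they agree; in the equal case both leading coefficients are strictly positive, so their sum is strictly positive and no cancellation occurs. Everything else reduces to the change-of-basis bookkeeping underlying (ii) and the explicit list of roots from Definition \ref{def:e8}.
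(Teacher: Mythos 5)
Your proposal is correct and follows essentially the same route as the paper: verify (i) and (ii) on the simple roots and extend by linearity (your lex-positivity closure argument is just a careful spelling-out of the paper's ``follows by linearity''), then deduce (iii) and (iv) from (ii) together with the explicit list of roots in Definition \ref{def:e8}. No gaps.
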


\begin{proof}
Statements (i) and (ii) can be checked by inspection for the simple roots, and the general case follows by linearity.

If $c_1 = 2$ (respectively, $c_1 = 1$), then we must have $d_8 = 1$ (respectively, $d_8 = \tfrac{1}{2}$) by (ii). 
Parts (iii) and (iv) now follow from the description of the roots given in Definition \ref{def:e8}.
\end{proof}

\begin{defn}\label{def:psi}
We define $\Psi := \allroots_{1, 1}$ to be the set of $64$ roots of type $E_8$ in which $\al_1$ appears with 
coefficient $1$. We define $\qp_\Psi$ to be the set of $8$-element subsets of $\Psi$ that are orthogonal with respect 
to the usual scalar product on $\R^8$.
Given $\al \in \Psi$, we define $\slp(\al) \in \lamp$ by $\slp(\al) = \{A \ |\  B \}$, where we have $8 \in A$ and $$
\al = \sum_{i \in A} \tfrac{1}{2}e_i - \sum_{j \in B} \tfrac{1}{2}e_j
.$$
\end{defn}

\begin{rmk}\label{rmk:slp}
It is immediate from Lemma \ref{lem:e8} (iv) that $\slp$ is a bijection from $\Psi$ to $\lamp$.
\end{rmk}

The {\it Hamming weight} of a binary string is the number of occurrences of $1$, and the {\it Hamming distance}
between two binary strings is the number of positions at which the strings disagree. The subset $E \subset \F_2^8$ 
consisting of vectors of even weight is a subspace of $\F_2^8$.
The quadratic form $$
Q'\left(\sum_{i = 1}^8 x_i \ep_i\right) = \sum_{1 \leq i < j \leq 8} x_i x_j
$$ on $E$ satisfies $Q'(v) = \binom{\wt(v)}{2} \mod 2$, where $\wt(v)$ is the Hamming weight of $v$. 
The form $Q'$ is constant on the cosets $E/U$, and it induces a quadratic form $Q$ on $\lamp$ such that $Q(v) = 1$ if 
$v \in V_2$, and $Q(v) = 0$ if $v \in V_0 \cup V_4$. In
other words, we have $Q(v) = \wt(v)/2 \mod 2$, where $\wt(v)$ denotes the Hamming weight of either vector in the coset
corresponding to $v$. The form $Q$ is well known in the literature in the context of the orthogonal group 
$O_6^+(2) \cong A_8$. In particular, the entry for the group $A_8$ in the Atlas of Finite Simple Groups \cite{conway85}
states that $Q$ has Witt defect zero (and thus Witt index $3$).

If $v \in V = \F_2^8/U$, then the coset $v + U$ lies in $V_0$ (respectively, $V_1$, $V_2$, $V_3$, 
$V_4$) if we have $\wt(v) \in \{0, 8\}$ (respectively, $\wt(v) \in \{1, 7\}$, $\wt(v) \in \{2, 6\}$, 
$\wt(v) \in \{3, 7\}$, $\wt(v) = 4$). It is therefore well defined to call two cosets $v_1 + U$ and $v_2 + U$ in $V$ 
{\it separated} if the Hamming distance between $v_1$ and $v_2$ is $4$. We call a subset $\calr$ of $V$ {\it maximally 
separated} if $\calr$ is a subset of maximal cardinality such that any two elements of $\calr$ are separated.

The next result relates the packings of $PG(3, 2)$ to the subset $\Psi$ of the $E_8$ root system and 
the quadratic form $Q$ on $\lamp$. 

\begin{theorem}\label{thm:corresp}
Let $Q$ be the quadratic form on $\lamp$ defined above, where $V = \F_2^8/U = \lamp \oplus \lamm$ as vector spaces.
Let $\qp_A$ be the set of maximally separated subsets of $V$, and define $\qp^+_A := \qp_A \cap \lamp$ and
$\qp^-_A := \qp_A \cap \lamm$.
\begin{itemize}
\item[{\rm (i)}]{The elements of $\qp_A$ are the cosets in $V$ of the $30$ maximal totally singular subspaces of 
$AG(6, 2) \cong \lamp$ with respect to $Q$. In particular, every element of $\qp_A$ is a set of size $8$.}
\item[{\rm (ii)}]{We have $|\qp_A|=480$, $|\qp^+_A| = |\qp^-_A| = 240$, and $\qp_A = \qp^+_A \ \dot\cup \ \qp^-_A$.}
\item[{\rm (iii)}]{The function $\slm$ is a bijection from the set $\qpx$ of packings of $PG(3, 2)$ to $\qp^-_A$.}
\item[{\rm (iv)}]{The function $\slp$ is a bijection from the set $\qp_\Psi$ of orthogonal $8$-tuples in $\Psi$ to
$\qp^+_A$.}
\item[{\rm (v)}]{The involution $t_8 : V \ra V$ given by $t_8(v + U) = v + \ep_8 + U$ induces a 
bijection $$
\slm \circ t_8 \circ (\slp)^{-1} : \qp_\Psi \ra \qpx
$$ from the set of $240$ orthogonal $8$-tuples in $\Psi$ to the set of packings of $PG(3, 2)$.}
\end{itemize}
\end{theorem}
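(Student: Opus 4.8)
The plan is to establish parts (i)--(v) in order, reducing everything to one geometric dictionary entry: inside $\lamp$, two distinct vectors are \emph{separated} exactly when $Q$ vanishes on their difference. Indeed, for distinct $u,v \in \lamp$ the sum $u+v$ lies in $V_2 \cup V_4$ (it is nonzero, and $V_0 = \{0\}$), and since $Q(w) = \wt(w)/2 \bmod 2$ we get $\wt(u+v) = 4$ (separation) iff $u+v \in V_4$ iff $Q(u+v) = 0$. This converts the metric condition into a statement about the form.

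For (i), first note that separation is invariant under every translation of $V$, since the distance between representatives depends only on the difference and ``$\wt = 4$'' is a well-defined coset condition. Given a separated set $\calr$, translate by one of its elements so that $0 \in \calr$; then $\calr \subseteq \lamp$. Each $r \in \calr$ is separated from $0$, so $Q(r) = 0$, and each pair is separated, so $Q(r+r') = 0$; writing $B(u,v) = Q(u) + Q(v) + Q(u+v)$ for the associated bilinear form, this forces $B \equiv 0$ on $\calr$. Expanding an arbitrary $\F_2$-combination via $Q\bigl(\sum_i r_i\bigr) = \sum_i Q(r_i) + \sum_{i<j} B(r_i,r_j)$ shows $Q$ vanishes on $\Span(\calr)$, so $\Span(\calr)$ is totally singular. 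Conversely every totally singular subspace is itself separated, since its nonzero elements all lie in $V_4$. Hence a maximal separated set containing $0$ equals its span, a totally singular subspace; as $Q$ has Witt index $3$, these have dimension at most $3$, and the (existing) $3$-dimensional ones attain the maximal size $8$. Undoing the translation gives (i). Part (ii) is then a count: there are $30$ maximal totally singular subspaces $W \subseteq \lamp$ (the generators of the $O_6^+(2)$ quadric), each with $2^{7-3} = 16$ cosets in $V$, of which the $2^{6-3} = 8$ with a representative in $\lamp$ lie wholly in $\lamp$ and the other $8$ lie wholly in $\lamm$. Since a coset recovers $W$ as its difference set, these $480$ cosets are distinct, giving $|\qp_A| = 480$ and $|\qp^+_A| = |\qp^-_A| = 240$ with $\qp_A = \qp^+_A \,\dot\cup\, \qp^-_A$.

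For (iii) and (iv) I would check that each construction yields a separated $8$-set and is injective, then upgrade to a bijection using equality of cardinalities ($240$ on both sides). For $\slm$: the seven spreads of a packing are triples in $\binom{\eight}{3}$ meeting pairwise in a singleton, so any two corresponding weight-$3$ vectors differ in $3+3-2 = 4$ coordinates, and each triple is disjoint from the unused label, so each weight-$3$ vector differs from the weight-$1$ basepoint in $4$ coordinates; thus $\slm(x)$ is a separated $8$-subset of $\lamm$, lies in $\qp^-_A$, and determines $x$ (its weight-$3$ vectors are the spreads), so $\slm$ is injective, hence bijective. For $\slp$: writing $\al,\be \in \Psi$ with sign vectors $\epsilon,\delta \in \{\pm1\}^7$ on the coordinates $e_1,\dots,e_7$, a direct computation gives $\ip{\al,\be} = 2 - k/2$, where $k$ is the number of coordinates in which $\epsilon$ and $\delta$ disagree, while the Hamming distance between $\slp(\al)$ and $\slp(\be)$ equals that same $k$. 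Hence $\al \perp \be$ iff $k = 4$ iff $\slp(\al),\slp(\be)$ are separated, so the relation-preserving bijection $\slp : \Psi \to \lamp$ carries orthogonal $8$-subsets onto separated $8$-subsets, i.e.\ $\qp_\Psi$ onto $\qp^+_A$.

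Finally, (v) is pure composition. The map $t_8$ is translation by the odd-weight vector $\ep_8$, hence an involution that interchanges $\lamp$ and $\lamm$ while preserving separation, so it restricts to a bijection $\qp^+_A \to \qp^-_A$. The three bijections of (iv), of $t_8$, and of (iii) are mutually compatible, so chaining them (applying $\slp$, then $t_8$, then the inverse of $\slm$) produces the asserted bijection $\qp_\Psi \to \qpx$. I expect the main obstacle to be part (i): carefully justifying the translation-invariance of separation and the identity $Q\bigl(\sum r_i\bigr) = \sum Q(r_i) + \sum_{i<j} B(r_i,r_j)$ that forces $\Span(\calr)$ to be totally singular. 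Once that structural description is secured, (ii)--(v) are a count, two short verifications, and a composition.
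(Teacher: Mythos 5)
Your proof is correct, and parts (ii)--(v) run essentially parallel to the paper's: the same coset count $30 \times 16 = 480$ split evenly between $\lamp$ and $\lamm$, the same verification that $\slm(x)$ is a separated $8$-subset of $\lamm$ (pairwise symmetric differences of size $4$) and that orthogonality in $\Psi$ corresponds to Hamming distance $4$ under $\slp$, followed by injectivity plus equality of cardinalities, and composition for (v). The genuine difference is in part (i). The paper translates a maximally separated set to one containing $0$ and then invokes the known classification of the maximal cliques of the strongly regular graph on $P_4$ with parameters $(35,18,9,9)$, citing Brouwer--Van Maldeghem, to identify the translated set with a maximal totally isotropic subspace. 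You instead prove this directly: for distinct $u, v \in \lamp$ separation is equivalent to $Q(u+v)=0$, so a separated set $\calr$ through $0$ consists of singular vectors that pairwise polarize to zero, and the identity $Q\bigl(\sum_i r_i\bigr) = \sum_i Q(r_i) + \sum_{i<j} B(r_i,r_j)$ forces $\Span(\calr)$ to be totally singular; conversely a totally singular subspace is separated because its nonzero vectors lie in $V_4$, so maximality together with Witt index $3$ pins down dimension $3$ and size $8$. This is more self-contained, replacing an external clique classification by elementary quadratic-form algebra, although both arguments still import the standard count of $30$ generators of the $O_6^+(2)$ quadric. One small point worth making explicit in your write-up: after translating so that $0 \in \calr$, the reason that $\calr \subseteq \lamp$ (indeed $\calr \setminus \{0\} \subseteq V_4$) is that every element is separated from $0$ and therefore has a representative of Hamming weight $4$; you use this but state it only in passing.
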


\begin{proof}
Let $\Gamma$ be the graph with vertex set $P_4$ in which $x$ is adjacent to $y$ if and only if $x$ and $y$ are
separated. It is known \cite[\S10.13]{brouwer22} that $\Gamma$ is a strongly regular graph with parameters 
$(35, 18, 9, 9)$, and that $\Gamma$ is the complement of the unique rank $3$ strongly regular graph with parameters 
$(35, 16, 6, 8)$. Furthermore, the maximal cliques in $\Gamma$ have size $7$, and correspond (when the zero vector is 
included) to the $30$ maximal totally isotropic subspaces of $\lamp$, meaning the maximal subspaces on which $Q$
vanishes. These $30$ subspaces are maximally separated subsets of $\lamp \subset V$ because they have $8$ elements. The 
cosets of the $30$ subspaces in $V$ are also maximally separated, because $v_1 + U$ and $v_2 + U$ are separated if
and only if $v_1 + a + U$ and $v_2 + a + U$ are separated for any given $a$.

It remains to show that there are no additional maximally separated subsets of $V$.
Suppose that $A \subset V$ is a maximally separated subset of $V$, and let $a \in A$. This implies that the translate
$A - a$ is a maximally separated subset of $V$ that contains the zero vector, and that $(A - a) \backslash \{0\}$ is
a maximal clique in $\Gamma$. The classification of the cliques of $\Gamma$ given in the previous paragraph implies
that $A - a$ is a totally isotropic subspace, which completes the proof of (i).

Two binary strings at Hamming distance $4$ have the same Hamming weight modulo $2$, which implies that any two
separated elements of $V$ must both lie in $\lamp$ or in $\lamm$, and therefore that we have a disjoint union 
$\qp_A = \qp^+_A \ \dot\cup \ \qp^-_A$. The function $t_8 : \lamp \ra \lamm$ defined in the statement of (v)
induces a bijection between $\qp^+_A$ and $\qp^-_A$, which implies that $|\qp^+_A| = |\qp^-_A|$ and that
$|\qp_A| = |\qp^+_A| + |\qp^-_A|$. Recall that we have $\dim(V)=7$, so each totally isotropic subspace has 
$2^{7-3}=16$ cosets in $V$. It follows from (i) that $|\qp_A|=480$, and this completes the proof of (ii). 

Let $x \in \qpx$. Definition \ref{def:slm} implies that if $v$ and $v'$ are two elements of 
$\slm(x) \subset \lamm$, then either (a) we have $v = v_{ijk}$, $v' = v_{ilm}$ and $|\{i, j, k, l, m\}| = 5$, 
or (b) $v = v_{ijk}$, $v' = v_l$, and $|\{i, j, k, l\}| = 4$. In either case, $v$ and $v'$ are separated.
Definition \ref{def:slm} implies that $|\slm(x)| = 8$, from which it follows that $\slm(x)$ is maximally
separated. The function $\slm$ is injective, and we have $|\qpx| = 240$, so (ii) implies that $\slm$ is a
bijection, which completes the proof of (iii).

Two elements of $\Psi$ are orthogonal in $\R^8$ if and only if their coordinates in terms of the $e_i$-basis
agree in precisely four positions. This implies that $\slp$ induces a bijection between the mutually orthogonal 
size $8$ subsets of $\Psi$ and the maximally separated subsets of $\lamp$, proving (iv).

Part (v) is a consequence of parts (iii) and (iv).   
\end{proof}

The specific choice of bijection $t_8$ in part (v) of Theorem \ref{thm:corresp} is made for compatibility with 
Lemma \ref{lem:d8act}.

The following equivalence relation on packings is already implicit in the literature.

\begin{defn}\label{def:coseteq}
We call two elements of $\qp^+_A$ {\it coset equivalent} if they are cosets of the same subspace. By extension,
this induces a notion of coset equivalence on $\qpx$ (respectively, $\qp_\Psi$) via $\slm$ (respectively, $\slp$).
\end{defn}

\begin{exa}\label{exa:thcthn}
The motivation for studying $\qp_\Psi$ is that it has the structure of a partially ordered set, as described in
\cite[\S6.3]{gx5}. If we denote the minimum and maximum elements of $\qp_\Psi$ by $y_0$ and $y_1$ respectively,
then it follows from \cite[Remark 6.10 (ii)]{gx5} that we have $$
\slp(y_0) = \{v_{1278}, v_{1368}, v_{1458}, v_{2358}, v_{2468}, v_{3478}, v_{5678}, v_\emptyset\}
$$ and $$
\slp(y_1) = \{v_{18}, v_{2348}, v_{2568}, v_{3578}, v_{4678}, v_{45}, v_{36}, v_{27}\}
.$$ A routine verification shows that we have $\slp(y_0) = t_8 \slm(x_0)$ and $\slp(y_1) = t_8 \slm(x_1)$,
which is the motivation for the definition of $x_0$ and $x_1$. The vectors $v_0, v_1 \in \qp^+_A$ that correspond 
to $y_0$ and $y_1$ satisfy $v_0 + U = v_1 + \ep_1 + \ep_8 + U$, which proves that the minimum and maximum elements 
are coset equivalent.
\end{exa}

\begin{rmk}\label{rmk:hhz}
The $6$-dimensional vector space underlying $AG(6, 2)$ may be identified with $\Alt(4, 2)$, the set of all
skew-symmetric $4 \times 4$ matrices over $\F_2$, i.e., the symmetric matrices with zeros
on the diagonal. The determinant of a skew symmetric matrix $U$ is the square of the Pfaffian, so that we have $$
\det(U) = (u_{12}u_{34} - u_{13}u_{24} + u_{14}u_{23})^2
,$$ where $u_{ij}$ is the $(i, j)$-entry of $U$. Any skew symmetric matrix has even rank, even in characteristic $2$,
so any nonzero singular matrix in $\Alt(4, 2)$ has rank $2$ and is a zero of the quadratic form 
$u_{12}u_{34} + u_{13}u_{24} + u_{14}u_{23}$ of Witt index 3.

From this point of view, the $35$ singular points of $PG(5, 2)$  correspond to the $35$ nonzero singular matrices in
$\Alt(4, 2)$, and the elements of $\qp^+_A$ correspond to the $30$ $3$-dimensional subspaces of $\Alt(4, 2)$ that 
consist entirely of singular matrices, together with all of their translates.

The elements of $\Alt(4, 2)$ can be made into a graph, where two vertices are adjacent if their difference has rank
$2$. For more details, see \cite{huang15} and \cite[\S3.4.2]{brouwer22}.
\end{rmk}

\begin{rmk}\label{rmk:otherpsi}
The spreads and packings of $PG(3, 2)$ may also be modelled by the set of $64$ roots 
$\Psi' := \allroots_{2, 1} \ \dot\cup \ \allroots_{2, 3}$ in which $\alpha_2$ appears with odd positive 
coefficient. Each of the sets $\allroots_{2, 1}$ and $\allroots_{2, 3}$ forms a distributive lattice under the usual
ordering on roots. The poset $\allroots_{2, 1}$ is isomorphic (via a unique isomorphism) to $J([3] \times [5])$ 
and is in canonical bijection with the $56$ spreads, where the lowest element $\alpha_2$ corresponds to the spread 
$123$, and the highest element corresponds to the spread $678$. The poset $\allroots_{2, 3}$ is a chain which is in 
canonical bijection with the set of
basepoints, where the lowest element corresponds to $1 \in \eight$ and the highest element $\theta$ corresponds
to $8 \in \eight$. Each of the $240$ maximal orthogonal subsets of $\Psi'$ contains seven elements of $\allroots_{2, 1}$
and one element of $\allroots_{2, 3}$, and each such subset corresponds bijectively to a packing of $PG(3, 2)$, as in
Definition \ref{def:slm}.

One advantage of using $\Psi'$ instead of $\Psi$ is that the copy of $S_8$ that acts on $\Psi'$ is the standard parabolic
subgroup of $W(E_8)$ generated by $S \backslash \{s_{\al_2}\}$. On the other hand, the set $\Psi$ has the 
advantages that (a) it has an additive structure, (b) it is compatible with the standard coordinate system of the $E_8$ 
lattice, and (c) it supports natural actions by signed permutations. We will primarily work with $\Psi$ rather than 
$\Psi'$ in this paper.
\end{rmk}

\section{Lehmer codes and the Bruhat order on packings}\label{sec:lehmer}

In Section \ref{sec:lehmer}, we define the Bruhat order on packings and its associated Lehmer code. The key to the 
definition is a new integer valued statistic on packings analogous to the length of a permutation, which we 
call the ``height" of a packing. Applying a transposition to a packing always changes the height by an odd number, 
and we define the resulting packing to be higher (respectively, lower) in the partial order if the height increases 
(respectively, decreases). 

The definitions in Section \ref{sec:lehmer} are as combinatorial as possible and do not involve Lie theory, which has 
the advantage of making the definitions straightforward, but has the disadvantage of making the definitions look 
arbitrary. Later, we will relate the partial order on packings to the aforementioned partial order on $\qp_\Psi$ 
(Theorem \ref{thm:qppack}), and we will give two combinatorial interpretations of the height function in terms of the
root system of type $E_8$ (Remark \ref{rmk:residues}).

Let $S_n$ be the symmetric group on $n$ letters. A set of generators for $S_n$ is $S = \{s_1, s_2, \ldots, s_{n-1}\}$,
where $s_i$ is the simple transposition $(i, i+1)$. If $k$ is the smallest number such that 
$w = s_{i_1} s_{i_2} \cdots s_{i_k}$, then we call the word $s_{i_1} s_{i_2} \cdots s_{i_k} \in S^*$ a {\it reduced
expression} for $w$, and we call $k$ the {\it length}, $\ell(w)$ of $w$. An {\it inversion} of an element $w \in S_n$ 
is an ordered pair $(i, j)$ such that $1 \leq i < j \leq n$ and $w(i) > w(j)$. The number of inversions of $w$ is equal 
to $\ell(w)$.

We abbreviate the two-line notation $$
\left(
\begin{matrix}
1 & 2 & \cdots & n\\
a_1 & a_2 & \cdots & a_n\\
\end{matrix}
\right)
$$ for a permutation $w \in S_n$ to $a_1 a_2 \cdots a_n$. With this notation, the unique {\it longest element} 
$w_0 \in S_n$ of maximal length is $n\  n-1\ \cdots\ 2 \ 1$.

Manivel \cite[\S2.1]{manivel01} defines the code $c(w)$ of a permutation $w \in S_n$ to be the sequence 
$(c_1, c_2, \ldots, c_{n-1})$, where $$
c_i = \left| (i, j) : 1 \leq i < j \leq n : w(i) > w(j) \right| 
$$ is the number of inversions of $w$ to the right of $i$. This induces a bijection between $S_n$ and the set of
integer sequences $(c_1, c_2, \ldots, c_{n-1})$ such that $0 \leq c_i \leq n-i$.
Manivel \cite[Remark 2.1.9]{manivel01} proves that if we have $c(w) = (c_1, c_2, \ldots, c_{n-1})$, then a reduced
expression for $w$ given by $$
w = 
(s_{c_1} s_{c_1 - 1} \cdots s_1)
\cdots 
(s_{c_k + k - 1} s_{c_k + k - 2} \cdots s_k)
\cdots
(s_{c_{n-1} + n - 2} s_{c_{n-1} + n - 3} \cdots s_{n-1})
,$$ where there are $n-1$ factors (some of which may be empty) of lengths $c_1, c_2, \ldots, c_{n-1}$.

The code described above is often called a {\it Lehmer code} after D.H. Lehmer, although the idea appears in much
older work of Laisant \cite{laisant1888}. For the purposes of this paper, we need to modify the definition of Lehmer
code as follows.

\begin{defn}\label{def:lehmer}
We define the {\it dual Lehmer code} $\leh(w)$ of a permutation $w \in S_n$ to be the sequence 
$(\leh_n, \leh_{n-1}, \ldots, \leh_2)$, where $$
\leh_j = \leh_j(w) = \left| (i, j) : 1 \leq i < j \leq n : w(i) > w(j) \right| 
$$ is the number of inversions of $w$ to the left of $j$.
\end{defn}

\begin{rmk}\label{rmk:redexp}
The longest element $w_0$ satisfies $w_0^2 = 1$, $w_0(i)=n+1-i$ and $w_0 s_i w_0 = s_{n-i}$. When combined with
the discussion above, this has the following immediate consequences for an element $w \in S_n$.
\begin{itemize}
\item[(i)]{We have $\leh(w) = c(w_0 w w_0)$, and vice versa.}
\item[(ii)]{The function $\leh$ is a bijection from $S_n$ to the set of integer sequences 
$(\leh_n, \leh_{n-1}, \ldots, \leh_2)$ for which $0 \leq \leh_i < i$.}
\item[(iii)]{A reduced expression for 
the permutation $w \in S_n$ with code $\leh(w) = (\leh_n, \leh_{n-1}, \ldots, \leh_2)$ is $$
w = w_n w_{n-1} \cdots w_2
,$$ where $w_j$ is the (possibly empty) length-$\leh_j$ word $w_j = s_{j - \leh_j} \cdots s_{j-2} s_{j-1}$ 
for $1 < j \leq n$.}
\item[(iv)]{If $c(w) = (c_i)$ and $\leh(w) = (\leh_j)$ then $\sum_i c_i = \sum_j \leh_j = \ell(w)$.}
\end{itemize}
\end{rmk}

From now on, ``the code of $w$" refers to the dual Lehmer code $\leh(w)$ unless otherwise stated.

\begin{exa}\label{exa:code}
If $w = 3741652 \in S_7$, then we have $w_0 w w_0 = 6327415$, $\ell(w) = 12$, $c(w) = (2, 5, 2, 0, 2, 1)$ 
and $\leh(w) = (5, 2, 1, 3, 1, 0)$. The reduced expression of $w$ induced by $c(w)$ is $$
(s_2 s_1)(s_6 s_5 s_4 s_3 s_2)(s_4 s_3)()(s_6 s_5)(s_6)
,$$ and the reduced expression of $w$ induced by $\leh(w)$ is $$
(w_7)(w_6)(w_5)(w_4)(w_3)(w_2) = (s_2 s_3 s_4 s_5 s_6)(s_4 s_5)(s_4)(s_1 s_2 s_3)(s_2)()
.$$
\end{exa}

We are now ready to define the Bruhat order on packings and relate it to the dual Lehmer code $\leh$
of $S_8$. As a byproduct, we obtain a standard set of coset representatives for the transitive action of
$S_8$ on $\qpx$. The Bruhat order has some other remarkable properties, several of which we will discuss
in Section \ref{sec:conc}.

\begin{defn}\label{def:cset}
Define $C$ to be the set of 240 permutations given by $$
C := \{w \in S_8 : \leh_7(w) = \leh_6(w) = \leh_4(w) = 0\}
.$$ Given $w \in C$, we define the {\it reduced code} of $w$ to be the sequence 
$\leh'(w) = (\leh_8(w), \leh_5(w), \leh_3(w), \leh_2(w))$.
\end{defn}

The set $C$ may also be characterized as the set of elements $w \in S_8$ that have no inversions of the form
$(i, j)$ where $1 \leq i < j \in \{4, 6, 7\}$.

Recall that the set $\qpx$ of packings of $PG(3, 2)$ is indexed by equivalence classes of Fano planes with
vertices labelled by distinct elements of $\eight$, and that $S_8$ acts transitively on $\qpx$ by permuting
the vertex labels. If $1 \leq k < 8$, we write $S_k$ for the subgroup of $S_8$ that fixes the subset 
$\{k+1, k+2, \ldots, 8\} \subset \eight$ pointwise.

The packing $x_0 \in \qpx$ is the one that corresponds to the set of triples $$
\{127, 136, 145, 235, 246, 347, 567\}
$$ depicted in Figure \ref{fig:fano1}, where we have written $\{i, j, k\}$ as $ijk$ for brevity. 
(The choice of $x_0$ is not arbitrary, and is determined by the theory of quasiparabolic sets \cite{rains13},
\cite[Proposition 6.5]{gx5}.) If $w \in S_8$ and $x' =  wx_0$,
we call $w x_0$ is a reduced expression for $x'$ if $w$ has minimal length among all elements $w' \in S_8$ such that 
$wx_0 = w'x_0$.

\begin{lemma}\label{lem:x0}
Let $x_0$ be the packing defined above and let $ux_0$ be a reduced expression for $x' \in \qpx$.
\begin{itemize}
\item[{\rm (i)}]{We have $s_5 x_0 = s_1 x_0$, $s_5 s_4 x_0 = s_3 s_4 x_0$, and $s_6 x_0 = s_2 x_0$.}
\item[{\rm (ii)}]{If $w \in \{(12)(34), (13)(24), (14)(23)\}$ and $y \in S_3$, then we have $wy x_0 = yx_0$.}
\item[{\rm (iii)}]{If $u \in S_4$, then there exists $v \in S_3$ such that
$vx_0$ is a reduced expression for $x'$.}
\item[{\rm (iv)}]{If $u \in S_6$, then there exists $v \in S_5$ such that
$v x_0$ is a reduced expression for $x'$.}
\item[{\rm (v)}]{If $u \in S_7$, then there exists $v \in S_5$ such that
$vx_0$ is a reduced expression for $x'$ and $\leh_4(v)=\leh_6(v)=\leh_7(v)=0$.}
\end{itemize}
\end{lemma}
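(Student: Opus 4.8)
The plan is to settle (i) and (ii) by direct computation and then to derive (iii)--(v) from them by an induction on length. For (i), I would rewrite each identity as the statement that a double transposition fixes the seven triples $\{127,136,145,235,246,347,567\}$ defining $x_0$: since each $s_i$ is an involution, $s_5x_0=s_1x_0$ is equivalent to $s_1s_5=(12)(56)\in\Stab(x_0)$, likewise $s_6x_0=s_2x_0$ to $(23)(67)\in\Stab(x_0)$, and $s_5s_4x_0=s_3s_4x_0$ to $s_4s_3s_5s_4=(35)(46)\in\Stab(x_0)$; each of these three double transpositions is then checked to permute the seven triples of $x_0$ among themselves, a finite verification. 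For (ii), I would check in the same way that $(12)(34)$, $(13)(24)$, $(14)(23)$ each lie in $\Stab(x_0)$. These are exactly the nonidentity elements of the Klein four-group $V\trianglelefteq S_4$, and they form a single conjugacy class of $S_4$ that is stable under conjugation by $S_3$; hence for $y\in S_3$ and $w$ one of them, $y^{-1}wy$ is again one of the three, so $y^{-1}wy\in\Stab(x_0)$ and $wyx_0=y(y^{-1}wy)x_0=yx_0$.

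For (iii), observe that $V\subseteq\Stab(x_0)$ by (ii) and that $S_4=S_3V$ with $S_3\cap V=\{e\}$. Thus any $u\in S_4$ factors as $u=v\gamma$ with $v\in S_3$ and $\gamma\in V$, giving $ux_0=v\gamma x_0=vx_0$ with $v\in S_3$. It remains only to see that this $v$ has minimal length in $S_8$ among representatives of $x'$; since $\ell(v)\le 3$ and $S_3\cdot x_0$ has just six elements, this follows from the length induction below (or by direct inspection of the six packings).

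The heart of the matter is the following claim, proved by induction on $\ell_8(x'):=\min\{\ell(w):wx_0=x'\}$: every packing $x'$ with basepoint $8$ admits a minimal-length representative in $S_5$, and (for (v)) one may further require $\leh_4=0$. This yields (iv) and (v) at once, because any $u\in S_6$ or $u\in S_7$ fixes $8$, so $x'=ux_0$ has basepoint $u(8)=8$, while $\leh_6(v)=\leh_7(v)=0$ holds automatically for $v\in S_5$. The governing observation is that the basepoint of $wx_0$ is $w(8)$, so every representative of a basepoint-$8$ packing fixes $8$, lies in $S_7$, and in particular has no left descent $s_7$. In the inductive step I take a minimal representative $w$ with a left descent $s_i$ (necessarily $i\le 6$), pass to the strictly shorter packing $s_ix'$ (still of basepoint $8$, since $s_i$ fixes $8$ for $i\le 6$), apply the inductive hypothesis to obtain a minimal representative $v''\in S_5$ of $s_ix'$ with $\ell(v'')=\ell(w)-1$, and recover $x'=s_iv''x_0$. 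If $i\le 4$ then $s_iv''\in S_5$ already has length $\ell(w)=\ell_8(x')$ and is done. If $i=6$, then $s_6$ commutes with all of $S_5=\langle s_1,s_2,s_3,s_4\rangle$, so sliding it to the right and applying $s_6x_0=s_2x_0$ yields a reduced word $v''s_2\in S_5$; the same works for $i=5$ whenever $v''$ avoids $s_4$, via $s_5x_0=s_1x_0$.

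The principal obstacle is the single remaining case $i=5$ with $v''$ genuinely involving $s_4$: here $s_5$ cannot be moved across $v''$ to act on $x_0$ directly, and one must instead isolate a trailing $s_4$ and invoke the two-letter relation $s_5s_4x_0=s_3s_4x_0$, checking at each stage that no length is gained so that the resulting $S_5$-word is reduced (this is the delicate bookkeeping that makes the argument work). For the extra condition in (v), once a minimal $v\in S_5$ is in hand its $S_5$-representatives form the coset $vV$, and right multiplication by $V$ merely rearranges the entries $v(1),\dots,v(4)$ among the first four positions, leaving all inversions across $\{1,2,3,4\}$ and $\{5,\dots,8\}$ unchanged; hence the member of $vV$ placing the largest of $v(1),\dots,v(4)$ in position $4$ both realizes $\leh_4=0$ and minimizes the inversions within the first four positions, so it is still reduced. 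I would confirm this final minimization by the short computation comparing the four arrangements, which also shows $\Stab_{S_5}(x_0)=V$ and hence that $S_5\cdot x_0$ is exactly the set of $30$ basepoint-$8$ packings.
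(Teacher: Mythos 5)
Your proposal is correct in substance but organizes the argument differently from the paper. For (i)--(iii) the two routes essentially coincide: the paper also verifies (i) and the case $y=1$ of (ii) by direct computation and derives the general case of (ii) from the normality of the Klein four-group $V=\{1,(12)(34),(13)(24),(14)(23)\}$ in $S_4$; your reformulation of (i) as membership of $(12)(56)$, $(23)(67)$, $(35)(46)$ in $\Stab(x_0)$, and your factorization $S_4=S_3V$ for (iii), are cleaner packagings of the paper's ``replace the rightmost $s_3$ in $u_4$ by $s_1$'' step. (For the minimality of $v$ in (iii), the efficient observation is that the unique $S_3$-element of each coset $uV$ has minimal length within that coset --- a six-coset check --- which together with the global minimality of $u$ forces $\ell(v)=\ell(u)$; your appeal to ``direct inspection'' amounts to this.) The real divergence is in (iv)--(v): the paper works directly with the factorization $u=u_8u_7u_6u_5u_4u_3u_2$ coming from the code $\leh(u)$ and successively eliminates $u_4$, $u_6$, $u_7$ by explicit rewriting, whereas you induct on $\min\{\ell(w):wx_0=x'\}$, peel off a left descent $s_i$ (necessarily $i\leq 6$, since every representative of a basepoint-$8$ packing lies in $S_7$), and push $s_i$ back through the inductively obtained $v''\in S_5$. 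Your version is conceptually tidier and proves the statement for all basepoint-$8$ packings at once; the paper's is more explicit. Your normalization to $\leh_4=0$ via the coset $vV$ (the member placing the largest of $v(1),\dots,v(4)$ in position $4$ minimizes the inversions among the first four positions, verifiable coset by coset) is a legitimate alternative to the paper's repeated invocation of (iii).

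The one place you must be explicit is the case $i=5$ with $v''\notin S_4$, which you flag but do not execute. It does go through, but only because the inductive hypothesis supplies $\leh_4(v'')=0$: that is exactly what guarantees a factorization $v''=a\,s_4\,b$ with $a\in\langle s_1,s_2,s_3\rangle$, $b\in\langle s_1,s_2\rangle$ and $\ell(v'')=\ell(a)+1+\ell(b)$ (take $a$ to be the factor $v_5$ of $v''$ with its terminal $s_4$ removed, and $b=v_3v_2$), whence $s_5v''x_0=a\,b\,s_5s_4x_0=a\,b\,s_3s_4x_0$ by the commutations and (i), with no length gained. Without $\leh_4(v'')=0$ the trailing $s_4$ cannot in general be isolated this way (e.g.\ $v''=s_4s_3$ has no reduced factorization of the required shape), so carrying the $\leh_4=0$ condition through the induction --- or, as the paper does, first killing $u_4$ via (iii) --- is genuinely needed rather than a cosmetic convenience. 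With that point made explicit, your argument is complete.
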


\begin{proof}
Part (i) follows by direct calculation. 

The case $y = 1$ of (ii) also follows by direct calculation. The general case of (ii) follows from the 
case $y = 1$ and the fact that $\{1, (12)(34), (13)(24), (14)(23)\}$ is a normal subgroup of $S_4$.

The hypothesis $u \in S_4$ of (iii) implies that $u_8=u_7=u_6=u_5=1$, so that $u = u_4 u_3 u_2$ and
$u_3 u_2 \in S_3$. If $u_4 = 1$ then (iii) follows by taking $v=u$. Otherwise, we deduce from (ii) that 
$s_3 u_3 u_2 x_0 = s_1 u_3 u_2 x_0$, and we obtain $v$ by replacing the rightmost $s_3$ in $u_4$ by
$s_1$. The hypothesis that $ux_0$ is reduced implies that $vx_0$ is reduced, and we have $v \in S_3$ because
$v$ contains no occurrences of $s_3$. This completes the proof of (iii).

The hypothesis $u \in S_6$ of (iv) implies that $u_8=u_7=1$. By applying (iii) to the reduced expression
$u_4 u_3 u_2 x_0$, we may assume without loss of generality that $u_4=1$. If we have $u_6=1$, then (iv) 
follows by taking $v=u$. If not, we have $u_6 = u' s_5$, where $\ell(u') = \ell(u_6)-1$ and $u' \in S_5$.
There are two cases to consider.

In the first case, we have $u_6 \ne 1$ and $u_5=1$. By (iii), we may assume that $u_4=1$, which implies 
by (i) that $$
s_5 u_5 u_4 u_3 u_2 x_0 = s_5 u_3 u_2 x_0 = u_3 u_2 s_5 x_0 =u_3 u_2 s_1 x_0
.$$ The assertion of (iv) follows by setting $v = u' u_3 u_2 s_1$.

In the second case, we have $u_6 \ne 1$ and $u_5 \ne 1$, so the reduced expression $u_5$ is of the form
$u'' s_4$ where $\ell(u'') = \ell(u_5)-1$ and $u'' \in S_4$. By (iii), we may assume that $u_4=1$, which 
implies by (i) that $$
s_5 u_5 u_4 u_3 u_2 x_0 = s_5 u'' s_4 u_3 u_2 x_0 = u'' u_3 u_2 s_5 s_4 x_0 = u'' u_3 u_2 s_3 s_4 x_0
.$$ The proof of (iv) is completed by setting $v' = u' u'' u_3 u_2 s_3 s_4$.

The hypothesis $u \in S_7$ of (v) implies that $u_8=1$. Suppose first that $u_7 \ne 1$, so that we have 
$u_7 = y s_6$ for some $y \in S_6$ such that $\ell(y) = \ell(u_7)-1$. By applying (iv) to the reduced 
expression $u_6 u_5 u_4 u_3 u_2 x_0$, we may assume that $u_6=1$. It follows from (i) that we have $$
u x_0 = y s_6 u_5 u_4 u_3 u_2 x_0 = y u_5 u_4 u_3 u_2 s_6 x_0 = y u_5 u_4 u_3 u_2 s_2 x_0
.$$ This reduces the proof of (v) to the case $u \in S_6$.

Suppose from now on that $u \in S_6$, so that $u_7=1$ and $u_6 u_5 u_4 u_3 u_2$ is the reduced expression 
for $u$ corresponding to $\leh(u)$. We may assume by (iv) that $u_6=1$, and by applying (iii) to the 
reduced expression $u_4 u_3 u_2 x_0$, we may assume that $u_4=1$, which completes the proof of (v).
\end{proof}

\begin{defn}\label{def:theorder}
If $wx_0$ is a reduced expression for the packing $x \in \qpx$, then we define the {\it height} $\rootht(x)$ 
to be $\ell(w)$. If $x \in \qpx$ is a packing and $t \in S_8$ is a transposition, we write $x \prec tx$ if 
$\rootht(x) < \rootht(tx)$.
We define the {\it Bruhat order} $(\qpx, \leq)$ on packings to be the reflexive, transitive closure of $\prec$.
\end{defn}

The quantum integers $[d]_q$ in the next result are defined as $$
[d]_q = \frac{q^d-1}{q-1} = 1 + q + q^2 + \cdots + q^{d-1}
.$$

\begin{prop}\label{prop:combo}
Let $x_0$ be the packing defined earlier.
\item[{\rm (i)}]{If $wx_0$ is a reduced expression for $x' \in \qpx$, then there exists a reduced expression 
$y x_0$ for $x'$ such that $y \in C$ and $\ell(y) \leq \ell(w)$.}
\item[{\rm (ii)}]{The set $\{cx_0 : c \in C\}$ is a complete, irredundantly described set of reduced expressions 
for the packings of $PG(3, 2)$.}
\item[{\rm (iii)}]{The generating function $\sum_{c \in C} q^{\rootht(cx_0)}$ is 
$[2]_q [3]_q [5]_q [8]_q$.}
\item[{\rm (iv)}]{If $t \in S_8$ is a transposition and $x \in \qpx$ is a packing, then we have 
$\rootht(tx) = \rootht(x) + 1 \mod 2$.}
\item[{\rm (v)}]{The Bruhat order $(\qpx, \leq)$ is a partial order on $\qpx$.}
\end{prop}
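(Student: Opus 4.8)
The plan is to derive everything from Lemma \ref{lem:x0}(v), which already reduces representatives lying in $S_7$ to $C \cap S_5$; the only new feature in $S_8$ is the coefficient $\leh_8$, which $C$ permits to be arbitrary. For (i), I would factor a reduced $w$ by its dual Lehmer code as $w = w_8 u$, where $w_8 = s_{8 - \leh_8(w)} \cdots s_7$ is the canonical length-$\leh_8(w)$ factor and $u = w_7 \cdots w_2 \in S_7$; since the canonical factorization is reduced (Remark \ref{rmk:redexp}(iii)), $\ell(w) = \ell(w_8) + \ell(u)$. First I would note that $ux_0$ is itself reduced: a shorter $u'$ with $u'x_0 = ux_0$ would make $w_8 u'$ a shorter representative of $x' = wx_0$, contradicting reducedness. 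Then Lemma \ref{lem:x0}(v) supplies $v \in S_5$ with $vx_0 = ux_0$ reduced and $\leh_4(v) = \leh_6(v) = \leh_7(v) = 0$, so $v \in C$ and $\ell(v) = \ell(u)$. Writing the canonical factorization $v = v_5 v_3 v_2$ (with $v_4 = 1$, as $\leh_4(v) = 0$) and prepending $w_8$ displays $y := w_8 v = w_8 v_5 v_3 v_2$ in the canonical shape for the code $(\leh_8(w), 0, 0, \leh_5(v), 0, \leh_3(v), \leh_2(v))$; by Remark \ref{rmk:redexp}(iii) this expression is reduced, so $y \in C$ with $yx_0 = x'$ and $\ell(y) = \ell(w_8) + \ell(v) = \ell(w)$.

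Parts (ii) and (iii) are then counting. Via the dual Lehmer code, $C$ corresponds to the sequences $(\leh_8, 0, 0, \leh_5, 0, \leh_3, \leh_2)$ with $0 \le \leh_8 \le 7$, $0 \le \leh_5 \le 4$, $0 \le \leh_3 \le 2$, $0 \le \leh_2 \le 1$, so $|C| = 8 \cdot 5 \cdot 3 \cdot 2 = 240 = |\qpx|$. By (i) the map $c \mapsto cx_0$ is onto $\qpx$, and a surjection between equal finite sets is a bijection, which gives the irredundance. For the reducedness claim in (ii), given $c \in C$ I would apply (i) to any reduced representative of $cx_0$ to obtain $y \in C$ with $yx_0 = cx_0$ reduced, and injectivity forces $y = c$; thus $cx_0$ is reduced and $\rootht(cx_0) = \ell(c) = \leh_8(c) + \leh_5(c) + \leh_3(c) + \leh_2(c)$. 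Summing $q^{\ell(c)}$ over $C$ then factors as $[8]_q [5]_q [3]_q [2]_q$, proving (iii).

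For (iv) the crucial input is $\Stab_{S_8}(x_0) \subseteq A_8$. This follows from the fact recorded in the introduction that the $240$ packings form two $A_8$-orbits of size $120$ yet a single $S_8$-orbit: an $S_8$-orbit splits into exactly two $A_8$-orbits precisely when the point stabilizer lies in $A_8$. (Alternatively, $\Stab_{S_8}(x_0) \cong PGL(3,2)$ is simple nonabelian, so the sign character restricts trivially to it.) Hence for every $w$ with $wx_0 = x$ the parity of $\ell(w)$ records the sign of $w$ and is independent of the representative, so $\rootht(x) \equiv \ell(w) \pmod 2$. A transposition $t$ reverses the sign, so $\rootht(tx) \equiv \ell(tw) \equiv \ell(w) + 1 \equiv \rootht(x) + 1 \pmod 2$.

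Finally, (v) is immediate from the existence of the strictly monotone statistic $\rootht$: each $\prec$-step strictly increases height, so a chain of $\prec$-steps from $x$ to $y \neq x$ forces $\rootht(x) < \rootht(y)$. Reflexivity and transitivity hold by construction as the reflexive transitive closure, and antisymmetry follows since $x \le y$ and $y \le x$ with $x \neq y$ would give $\rootht(x) < \rootht(y) < \rootht(x)$. The main obstacle in the whole argument is the code bookkeeping in (i)—in particular confirming that $w_8 v$ is simultaneously reduced and a member of $C$—which rests on the canonical-form description of Remark \ref{rmk:redexp}(iii) together with the reduction already carried out in Lemma \ref{lem:x0}(v); once (i) is secured, parts (ii)–(v) reduce to counting and a parity computation.
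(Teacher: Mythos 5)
Your proof is correct and follows essentially the same route as the paper's: part (i) uses the identical factorization $w = w_8 u$ with $u \in S_7$ followed by Lemma \ref{lem:x0}(v), parts (ii) and (iii) rest on the same counting of codes against $|\qpx| = 240$, part (iv) uses the same observation that the simple nonabelian stabilizer of $x_0$ lies in $A_8$, and part (v) is the same monotonicity argument. The extra bookkeeping you supply (verifying that $w_8 v$ is reduced via the canonical form of Remark \ref{rmk:redexp}(iii), and noting $\ell(y) = \ell(w)$ rather than merely $\leq$) is a legitimate filling-in of details the paper leaves implicit.
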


\begin{proof}
In the context of (i), let $w_8 w_7 w_6 w_5 w_4 w_3 w_2$ be the factorization of $w$ corresponding to the code 
$\leh(w)$, and define $u = w_7 w_6 w_5 w_4 w_3 w_2 \in S_7$. Because $w x_0$ is reduced, it follows that 
$u x_0$ is reduced. By Lemma \ref{lem:x0} (v), there exists a reduced expression $v x_0$ for $u x_0$ such that 
$\leh_4(v) = \leh_6(v) = \leh_7(v) = 0$, which implies that $v \in C$. Part (i) follows by taking
$y = w_8 v$.

The group $S_8$ acts transitively on the packings, and it follows from (i) that every 
packing $w x_0$ for $w \in S_8$ can be expressed as $c x_0$ for some $c \in C$. We conclude that the list in
the statement of (ii) is complete, and the irredundancy follows from the fact that $|C| = 240$ is equal to the number
of packings.

Suppose that $cx_0$ is not a reduced expression for some $c \in C$. Let $w x_0$ be
a reduced expression for $c x_0$. By (i), there exists a reduced expression $c' x_0$
for $w x_0$ such that $c' \in C$ and $\ell(c') = \ell(w) < \ell(c)$. It follows that
$c$ and $c'$ are distinct elements of $C$ for which $c x_0 = c' x_0$, which is a contradiction and completes
the proof of (ii).

Part (iii) follows from (ii) and the definition of the set $C$.

The stabilizer $H$ of $x_0$ in $S_8$ is isomorphic to the nonabelian simple group $GL(3, 2)$. It follows that
$H$ consists entirely of even elements, or else $H \cap A_8$ would be a normal subgroup of $H$ of index $2$.
If $c \in C$, it follows that the left coset $cH$ consists entirely of even (respectively, odd) elements 
if $\rootht(cx_0)$ is even (respectively, odd). The conclusion of (iv) now follows.

The fact that $\rootht(tx) \ne \rootht(x)$ in the conclusion of (iv) implies that we have $\rootht(x) < \rootht(y)$
whenever $x < y$ in $(\qpx, \leq)$. It follows that the Bruhat order on packings is antisymmetric, which completes
the proof of (v).
\end{proof}

For $d > 0$, we define the chain $[d]$ to be the set $\{0, 1, \ldots, d-1\}$, ordered in the obvious way. 
The product of chains $[d_1] \times [d_2] \times \cdots \times [d_k]$ is the Cartesian product of the
chains $[d_1], [d_2], \ldots, [d_k]$ in which $(a_1, a_2, \ldots, a_k) \leq (b_1, b_2, \ldots, b_k)$ if and
only if we have $a_i \leq b_i$ for all $i$. Such a relation is a covering relation if and only if we
have $b_j = a_j + 1$ for some $j$, and $b_i = a_i$ for all $i \ne j$.

The set $C$ and the function $\leh'$ in the following result were defined in Definition \ref{def:cset}.

\begin{theorem}\label{thm:refines}
The function $L : \qpx \ra [8] \times [5] \times [3] \times [2]$ defined by $L(cx_0) = \leh'(c)$ for $c \in C$
is a bijection whose inverse is a morphism of partially ordered sets between the product order on chains and\
the Bruhat order on $\qpx$. 
\end{theorem}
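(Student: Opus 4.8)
The statement claims that $L : \qpx \to [8] \times [5] \times [3] \times [2]$, defined by $L(cx_0) = \leh'(c)$, is a bijection whose inverse respects the two partial orders. Let me think about what needs to be proven and sketch an approach.

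First, bijectivity. We have $|C| = 240$ by Proposition \ref{prop:combo}(ii), and the product of chains has $8 \cdot 5 \cdot 3 \cdot 2 = 240$ elements. The reduced code $\leh'(c) = (\leh_8(c), \leh_5(c), \leh_3(c), \leh_2(c))$ takes values where $0 \le \leh_j < j$. For $j = 8, 5, 3, 2$ this gives ranges $\{0,\ldots,7\}$, $\{0,\ldots,4\}$, $\{0,1,2\}$, $\{0,1\}$, matching $[8] \times [5] \times [3] \times [2]$ exactly. So I need to show $L$ is well-defined (i.e., $cx_0 = c'x_0$ with $c, c' \in C$ forces $\leh'(c) = \leh'(c')$, which follows from irredundancy in Prop \ref{prop:combo}(ii)) and that $\leh'$ restricted to $C$ hits all $240$ tuples. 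Since $\leh$ is a bijection from $S_8$ to its full code range (Remark \ref{rmk:redexp}(ii)), and $C$ is cut out by $\leh_7 = \leh_6 = \leh_4 = 0$, the map $c \mapsto (\leh_8, \leh_5, \leh_3, \leh_2)$ is a bijection from $C$ onto the product of chains by a counting/cardinality argument.

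**The order-theoretic content.** The real work is showing the inverse $L^{-1}$ is order-preserving: if $\mathbf{a} \le \mathbf{b}$ in $[8] \times [5] \times [3] \times [2]$, then $L^{-1}(\mathbf{a}) \le L^{-1}(\mathbf{b})$ in the Bruhat order.

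Here is my plan. Let me examine how the Bruhat order is built. By Definition \ref{def:theorder}, the Bruhat order is the transitive closure of the relation $x \prec tx$ whenever $\rootht(x) < \rootht(tx)$, where $t$ ranges over transpositions. Since covering relations in a product of chains increment a single coordinate by $1$, it suffices to handle this case: given $c \in C$ with $\leh'(c) = \mathbf{a}$ and $\mathbf{b}$ obtained from $\mathbf{a}$ by adding $1$ to one coordinate, I want to show $L^{-1}(\mathbf{a}) \le L^{-1}(\mathbf{b})$. So the key lemma is:

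For each of the four coordinate positions $j \in \{8, 5, 3, 2\}$, if $c, c' \in C$ satisfy $\leh_j(c') = \leh_j(c) + 1$ and agree in the other three reduced-code entries, then there is a transposition $t$ with $c'x_0 = t \cdot cx_0$ and $\rootht(cx_0) < \rootht(c'x_0)$.

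The natural approach: using the reduced-expression recipe from Remark \ref{rmk:redexp}(iii), incrementing $\leh_j$ by one corresponds to prepending a simple reflection $s_{j - \leh_j - 1}$ to the $j$-th factor $w_j$. I would verify that left-multiplying $c$ by the appropriate simple reflection (or a transposition descending to it via the coset relations in Lemma \ref{lem:x0}) produces an element of $C$ with the incremented code, and increases $\rootht$ by $1$. The subtlety is that $cx_0$ is a reduced expression for a packing, not for a permutation, so I must track that the length genuinely increases on the packing side and that the result still lies in $C$ after re-reducing via Proposition \ref{prop:combo}(i). The height increases by $1$ (hence changes parity, consistent with Prop \ref{prop:combo}(iv)), so $x \prec tx$ holds.

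**The main obstacle.** The hard part will be the interaction between the coset structure (the stabilizer $H \cong GL(3,2)$ forces many permutations to give the same packing, as catalogued in Lemma \ref{lem:x0}) and the combinatorics of the dual Lehmer code. Specifically, I must confirm that when I increment a single reduced-code coordinate, the minimal-length representative in $C$ for the new packing differs from the old one by exactly one transposition applied \emph{on the left} to the packing (i.e., $c'x_0 = tcx_0$), and that this transposition raises height. Concretely, I expect to realize each coordinate-increment as left multiplication $c \mapsto s_i c$ for a suitable simple reflection, then use Lemma \ref{lem:x0}(i)--(v) to re-express $s_i c$ as $c''$ with $c'' \in C$, and finally check $\leh'(c'') = \mathbf{b}$ and $\ell(c'') = \ell(c) + 1$. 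Managing these rewrites simultaneously across all four positions, while respecting the reduced-code constraints $\leh_7 = \leh_6 = \leh_4 = 0$ that define $C$, is where the bookkeeping concentrates; the parity result Prop \ref{prop:combo}(iv) will serve as a useful consistency check throughout. Once all four covering cases are established, transitivity of the Bruhat order yields that $L^{-1}$ preserves all comparabilities, completing the proof.
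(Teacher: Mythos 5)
Your overall skeleton matches the paper's proof: bijectivity comes from Proposition \ref{prop:combo} (ii) together with the count $8\cdot 5\cdot 3\cdot 2 = 240$, and the order-theoretic content is reduced to covering relations in the product of chains, for each of which one must exhibit a transposition $t$ with $c'x_0 = t(cx_0)$ and $\rootht(cx_0)<\rootht(c'x_0)$. That key lemma is exactly what the paper establishes. However, your proposed route to it has a genuine gap. Incrementing $\leh_j$ inserts the generator $s_{j-\leh_j-1}$ at the left of the \emph{factor} $w_j$, which sits in the middle of the word $w_8\cdots w_2$; this is not left multiplication of $c$ by a simple reflection, and the machinery you invoke to repair this --- the coset relations of Lemma \ref{lem:x0} and a ``re-reduction'' via Proposition \ref{prop:combo} (i) --- is both unnecessary and insufficient: the stabilizer of $x_0$ plays no role here, and you never actually produce the required transposition. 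The missing idea is purely a fact about the Bruhat order on $S_8$: since a reduced expression for $c$ is obtained from one for $c'$ by deleting a single letter, the exchange/subword property (\cite[Theorem 5.10, Proposition 5.11]{humphreys90}) supplies a reflection $t\in S_8$ (the inserted generator conjugated by the prefix to its left) with $tc'=c$ and $\ell(c')=\ell(c)+1$, so that $c<c'$ is a covering relation in the Bruhat order on $S_8$.

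Once that is in hand, nothing further needs to be checked on the packing side: both $c$ and $c'$ already lie in $C$, so by Proposition \ref{prop:combo} (ii) both $cx_0$ and $c'x_0$ are reduced expressions, giving $\rootht(cx_0)=\ell(c)<\ell(c')=\rootht(c'x_0)$, and $t(cx_0)=c'x_0$; Definition \ref{def:theorder} then yields $cx_0 \prec c'x_0$ directly. Your worry about whether the minimal-length representative in $C$ of the new packing ``differs from the old one by exactly one transposition'' dissolves because $c'$ is by construction the element of $C$ carrying the incremented code --- there is no re-reduction step. The gap is therefore concentrated in one place: replace the rewriting bookkeeping by a single appeal to the strong exchange condition in $S_8$.
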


\begin{proof}
It follows from Proposition \ref{prop:combo} (ii) that $L$ is a bijection.
A morphism of posets is a function $f$ from the poset $(A, \leq_A)$ to $(B, \leq_B)$ with the property that 
$a_1 \leq_A a_2$ implies $f(a_1) \leq_B f(a_2)$. In order to check that $L^{-1}$ is a morphism of posets,
it suffices to verify the property for the covering relations in the product of chains.

If $a < a'$ is a covering relation in $[8] \times [5] \times [3] \times [2]$, then the sequence $a$ can be
obtained from the sequence $a'$ by decreasing one of the entries by $1$ and keeping all other entries
the same. If $cx_0$ and $c'x_0$ are the packings in $\qpx$ corresponding to $a$ and $a'$, and we have
$c, c' \in C$, then the definition of $\leh$ implies that a reduced expression for $c$ can be obtained from a
reduced expression for $c'$ by deleting a single generator. It then follows from 
\cite[Theorem 5.10, Proposition 5.11]{humphreys90} that there exists a reflection (i.e., a transposition) 
$t \in S_8$ such that $tc' = c$, and that $c < c'$ is a covering relation in the Bruhat order on $S_8$.
It now follows from Definition \ref{def:theorder} that $cx_0 < c'x_0$ in the Bruhat order on $\qpx$.
\end{proof}

\begin{rmk}\label{rmk:refines}
\begin{itemize}
\item[(i)]{A property analogous to that of Theorem \ref{thm:refines} is used in \cite[Definition 4.2]{bolognini25} 
to define the notion of a Lehmer code for the Bruhat order of an arbitrary finite Coxeter group.}
\item[(ii)]{The six packings $$
\{x_0, \ s_1 x_0, \ s_2 x_0, \ s_2 s_1 x_0, \ s_1 s_2 x_0, \ s_1 s_2 s_1 x_0\}
$$ form a convex subposet of $(\qpx, \leq)$ that is isomorphic to the strong Bruhat order on $S_3$. The elements
$s_2 s_1 x_0$ and $s_1 s_2 x_0$ have no greatest lower bound, so $(\qpx, \leq)$ is not a lattice, even though 
$[8] \times [5] \times [3] \times [2]$ is a distributive lattice.}
\item[(iii)]{Define the packings $$
x = s_1 x_0 = (12)x_0
= \{127, 135, 146, 236, 245, 347, 567\}
$$ and $$
x' = s_1 s_2 x_0 = (123)x_0
= \{126, 135, 147, 237, 245, 346, 567\}
.$$ The reduced codes of $x$ and $x'$ are $(0, 0, 0, 1)$ and $(0, 0, 2, 0)$, respectively, so we have
$\rootht(x)=1$ and $\rootht(x')=2$. The relation $x < x'$ is a covering relation in $(\qpx, \leq)$, because
the heights differ by $1$ and we have $(13)x=x'$. Despite this, $L(x)$ and $L(x')$ are not comparable in the
order on the product of chains.}
\end{itemize}
\end{rmk}

\section{Signed permutations and quasiparabolic sets}\label{sec:signed}

The Weyl group $W(D_n)$ of type $D_n$ may be thought of as a group of signed permutations of $n$ objects of order 
$2^{n-1}n!$ (see \cite[\S2]{humphreys90}). The objects may be permuted arbitrarily, and the signs of the objects 
may be changed two at a time. The reflections in $W(D_n)$ consist of the transpositions
$\{(i, j) : 1 \leq i < j \leq n\}$ and the signed transpositions $\{\overline{(i, j)} : 1 \leq i < j \leq n\}$.
The signed transposition $t = \overline{(i, j})$ acts by exchanging object $i$ with object $j$ and then changing
the sign of each object, so that $t(i) = \overline{j}$, $t(j) = \overline{i}$, $t(\overline{i}) = j$, and
$t(\overline{j}) = i$. A set $S$ of Weyl group generators is given by the $n$ elements 
$(1, 2), (2, 3), \ldots, (n-1, n)$ together with $\overline{(1, 2)}$. We call an element $w \in W(D_n)$ 
a {\it pure sign change} if we have $w(i) \in \{i, \overline{i}\}$ for all $1 \leq i \leq n$.

Recall that $\Psi = \allroots_{1, 1}$ is the subset of roots of type $E_8$ in which $\al_1$ occurs with coefficient 
$1$. If we denote the reflection in the root $\al_i$ by $s_{\al_i}$, then $s_{\al_i}$ acts on $\Psi$ by place 
permutation as the transposition $(i-2, i-1)$ for $3 \leq i \leq 8$, and $s_{\al_2}$ acts on $\lamp$ by the signed
transposition $\overline{(1, 2)}$. In addition, the reflection in the highest root of type $E_8$, $s_\theta$,
sends elements of $\Psi$ to elements of $\pm \Psi$, but we can regard this as an action on $\Psi$ by identifying
each element $\be \in \Psi$ with the pair $\{\be, -\be\}$. Under this identification, the reflection $s_\theta$ acts
by the signed permutation $\overline{(7, 8)}$.

The action in the previous paragraph may also be regarded as an action on $\lamp$. The bijection 
$t_8 : \lamp \ra \lamm$ then induces an action of these generators on $\lamm$. Here, the generators $s_{\al_i}$ for
$2 \leq i \leq 8$ act by the same signed permutations as before, because they commute with the action of
$t_8$. In contrast, the reflection $s_\theta$ acts as $t_8 s_\theta t_8$, namely the unsigned transposition
$(7, 8)$. 

If we use Theorem \ref{thm:corresp} to describe this action in terms of $AG(6, 2)$ then the permutations
in $S_8$ act as linear transformations, and the sign changes act as translations.

This discussion may be summarized as follows.

\begin{lemma}\label{lem:d8act}
Identify the Weyl group $W(D_8)$ of type $D_8$ with the subgroup of $W(E_8)$ generated by all simple reflections
other than $s_{\al_1}$, together with the reflection $s_\theta$ corresponding to the highest root $\theta$.
\begin{itemize}
\item[{\rm (i)}]{The bijection 
$t_8 \slp : \Psi \ra \lamm$ induces an action of $W(D_8)$ on $\lamm$ via signed place permutations, in which
$s_{\al_i}$ acts as the transposition $(i-2, i-1)$ for $3 \leq i \leq 8$, $s_\theta$ acts as $(7, 8)$, and $s_{\al_2}$ 
acts as $\overline{(1, 2)}$.}
\item[{\rm (ii)}]{If we identify $\lamm \cong \lamp$ with $AG(6, 2)$, then the unsigned permutations of $W(D_8)$
act as linear transformations, and the sign changes act as translations.}
\end{itemize}\qed\end{lemma}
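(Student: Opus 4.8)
The plan is to reduce everything to explicit computations in the coordinate realization of $\Psi$ provided by Lemma \ref{lem:e8}(iv) and the bijection $\slp$, supplemented by one structural observation identifying the relevant subgroup as $W(D_8)$. Throughout I would write a root of $\Psi$ as $\al = \tfrac12 e_8 + \sum_{i=1}^7 \pm\tfrac12 e_i$ and record its image $\slp(\al) = \{A \mid B\}$, so that each coordinate sign records which side of the partition the corresponding label lies on.

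First I would verify the stated signed permutations directly. For $3 \le i \le 8$ the root $\al_i = e_{i-1}-e_{i-2}$ satisfies $s_{\al_i}(v) = v - \ip{v, \al_i}\al_i$, which interchanges the coefficients of $e_{i-2}$ and $e_{i-1}$; since neither index is $8$, this preserves $\Psi$ and, under $\slp$, transposes the labels $i-2$ and $i-1$, i.e.\ acts as $(i-2, i-1)$. For $\al_2 = e_1 + e_2$ one computes $s_{\al_2}(e_1) = -e_2$ and $s_{\al_2}(e_2) = -e_1$, which is precisely the signed transposition $\overline{(1,2)}$; this again preserves $\Psi$ because it fixes the $e_8$-coefficient and preserves the parity of minus signs. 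The same computation for $\theta = e_7 + e_8$ gives $s_\theta(e_7) = -e_8$, $s_\theta(e_8) = -e_7$, i.e.\ $\overline{(7,8)}$. To see that the subgroup generated is $W(D_8)$, I would note that $\al_2, \ldots, \al_8$ are the simple roots of the standard parabolic $W(D_7) \le W(E_8)$ obtained by deleting the node $\al_1$ (the resulting diagram is $D_7$), that these seven roots together with $-\theta$ span $\R^8$ and are therefore linearly independent, and that $\ip{-\theta, \al_8} = -1$ while $-\theta$ is orthogonal to $\al_2, \ldots, \al_7$; hence $\{\al_2, \ldots, \al_8, -\theta\}$ is a simple system of type $D_8$ and the reflections generate $W(D_8)$.

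The one genuinely delicate point is $s_\theta$, which does not preserve $\Psi$: applying it to a root with a $+$ sign in position $7$ produces a root with $e_8$-coefficient $-\tfrac12$, lying in $-\Psi$. I would first check that $W(D_8)$ preserves the set of $128$ half-integer roots $\Psi \cup (-\Psi)$ and commutes with $-1 \in W(E_8)$, so that the action descends to the $64$ pairs $\{\be, -\be\}$ and hence, after choosing the representative with positive $e_8$-coefficient, to $\Psi$ itself; under this convention $s_\theta$ acts as $\overline{(7,8)}$. Transporting the action through $t_8$ then requires the commutation relations with $t_8 = T_{\ep_8}$ (translation by $\ep_8$ on $V$). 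Writing each signed transposition as $T_{\ep_p + \ep_q}\circ(p,q)$, I would observe that $s_{\al_i}$ for $3 \le i \le 8$ and the transposition part of $s_{\al_2}$ fix position $8$, and that translations commute, so $s_{\al_2}, \ldots, s_{\al_8}$ all commute with $t_8$ and act on $\lamm$ exactly as on $\lamp$. For $s_\theta = T_{\ep_7 + \ep_8}\circ(7,8)$ the key one-line computation is $t_8\, s_\theta\, t_8 = T_{\ep_8}T_{\ep_7+\ep_8}T_{\ep_7}(7,8) = (7,8)$, using $(7,8)\,T_{\ep_8} = T_{\ep_7}(7,8)$; this yields the \emph{unsigned} transposition $(7,8)$ on $\lamm$, proving (i).

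Finally, for (ii) I would use the decomposition of any $w \in W(D_8)$ as a signed permutation: the underlying permutation acts on $V = \F_2^8/U$ by permuting coordinates, which is $\F_2$-linear and preserves the subspace $\lamp$, hence acts linearly on $AG(6,2)$; an even sign change flipping a set $T$ of coordinates acts under $\slp$ by adding $\sum_{i \in T}\ep_i$ to the chosen representative, i.e.\ as translation by the even-weight vector $v_T \in \lamp$. I expect the main obstacle to be the bookkeeping around $s_\theta$ — tracking the $\pm$-identification consistently and verifying the conjugation $t_8 s_\theta t_8 = (7,8)$ — since everything else is a routine coordinate calculation once the Dynkin-diagram identification of $W(D_8)$ is in place.
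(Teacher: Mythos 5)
Your proposal is correct and follows essentially the same route as the paper's own (informal) argument in the discussion preceding the lemma: explicit coordinate computation of each generating reflection, the identification of $\be$ with $\{\be,-\be\}$ to handle the fact that $s_\theta$ maps $\Psi$ into $\pm\Psi$, and the conjugation $t_8 s_\theta t_8 = (7,8)$ together with the commutation of the remaining generators with $t_8$. The only material you add is the verification that $\{\al_2,\dots,\al_8,-\theta\}$ forms a simple system of type $D_8$, a standard (Borel--de Siebenthal) fact that the paper absorbs into the statement of the lemma rather than proving.
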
  

It follows from Theorem \ref{thm:corresp} that the action of $W(D_8)$ on roots induces an action on $\qpx$. The 
next result shows how to define this action using only the combinatorics of labelled Fano planes, by describing 
the action of a given signed transposition on a given packing in terms of the action of a particular unsigned
transposition on the same packing.

\begin{prop}\label{prop:signact}
Let $x \in \qpx$ be a packing of $PG(3, 2)$ identified with an $\eight$-labelled Fano plane. Let $l \in \eight$ 
be the basepoint of $x$, let $e$ be one of the labels in $x$, and denote the three triples containing
$e$ by $\{e, a_1, b_1\}$, $\{e, a_2, b_2\}$, and $\{e, a_3, b_3\}$. 
\begin{itemize}
\item[{\rm (i)}]{We have $\overline{(l, e)}x = (a_1, b_1)x$ and $\overline{(a_1, b_1)}x = (l, e)x$.}
\item[{\rm (ii)}]{We have $(a_1, b_1)x = (a_2, b_2)x = (a_3, b_3)x$ and
$\overline{(a_1, b_1)}x = \overline{(a_2, b_2)}x = \overline{(a_3, b_3)}x$.}
\item[{\rm (iii)}]{For any $1 \leq i < j \leq 8$, there exist $1 \leq c < d \leq 8$ such that
$\overline{(i, j)}x = (c, d)x$.}
\item[{\rm (iv)}]{For any $1 \leq i < j \leq 8$, there exist $1 \leq c < d \leq 5$ such that either
$(i, j)x = (c, d)x$ or $(i, j)x = \overline{(c, d)}x$.}
\end{itemize}
\end{prop}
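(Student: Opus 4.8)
The plan is to transport the whole question to $\lamm$ via the bijection $\slm$ of Theorem~\ref{thm:corresp}(iii) and to compute with the explicit affine action of $W(D_8)$. The formula that drives all four parts is: for any $i,j\in\eight$, the signed transposition $\overline{(i,j)}$ acts on $\lamm$ by $v\mapsto (i,j)\cdot v+\ep_i+\ep_j$. This follows from Lemma~\ref{lem:d8act}, since $(i,j)$ acts linearly by permuting coordinates while the pure sign change of $\{i,j\}$ acts as a translation: by Definition~\ref{def:psi}, flipping the signs of the $i$th and $j$th coordinates of a root in $\Psi$ toggles membership of $i,j$ in the set $A$, and $t_8$ is itself a translation, so the net effect on $\lamm=t_8\slp(\Psi)$ is translation by $\ep_i+\ep_j$. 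As $(i,j)$ fixes $\ep_i+\ep_j$, the order of the two factors is irrelevant. More generally I write $\sigma_T$ for the pure sign change of a subset $T\subseteq\eight$, acting on $\lamm$ as translation by $\sum_{i\in T}\ep_i$.

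For (i), write $\slm(x)$ as the basepoint $v_l$, the three triples $v_{ea_tb_t}$ ($t=1,2,3$), and four further triples $T_1,\dots,T_4$ avoiding $e$. Since two lines of a Fano plane meet in a point, each $T_k$ meets each line $\{e,a_t,b_t\}$ in a vertex $\neq e$ and so contains exactly one of $a_t,b_t$; encode $T_k$ by the vector in $\{\pm\}^3$ recording these choices. Any two of the four triples share exactly one vertex, so their code vectors are pairwise at Hamming distance $2$, which forces the four codes to be a complete parity class in $\{\pm\}^3$. Using the displayed formula I would check that $(a_1,b_1)$ fixes $v_l$ and the three $e$-triples and flips the first coordinate of each $T_k$, while $\overline{(l,e)}$ (translating by $\ep_l+\ep_e$) fixes $v_l$ and the $e$-triples and complements each $T_k$ within the six vertices $\eight\setminus\{e,l\}$, i.e.\ flips all three coordinates. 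Flipping one coordinate and flipping all three each carry the parity class bijectively onto the opposite class, so both operations yield the same set of four triples; hence $\overline{(l,e)}x=(a_1,b_1)x$, and the identical computation with $l,e$ interchanged gives $\overline{(a_1,b_1)}x=(l,e)x$. Part (ii) is then immediate, since the argument used only that $\{e,a_t,b_t\}$ is a line through $e$ and so applies verbatim to each pair, all values coinciding with $\overline{(l,e)}x$ (respectively $(l,e)x$).

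For (iii) I would use the coset structure of Theorem~\ref{thm:corresp}(i): $\slm(x)=u+W$ for a maximal totally singular $W\subseteq\lamp$, and translation by $w$ fixes this coset exactly when $w\in W$. The key observation is that for any spread $\{i,j,k\}$ of $x$ with basepoint $l$, the difference $v_{ijk}+v_l=v_{ijkl}$ lies in $W$, so $\sigma_{ijkl}$ fixes $x$. If one index is the basepoint, (iii) is exactly (i). If $i,j$ are vertices on the line $\{i,j,k\}$, apply (i) to the packing $y=(k,l)x$, whose basepoint is $k$ and which contains the line $\{l,i,j\}$ through its vertex $l$: this gives $\overline{(k,l)}y=(i,j)y$, i.e.\ $\sigma_{kl}x=\overline{(k,l)}(k,l)x=(i,j)(k,l)x$. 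Combining with $\sigma_{ijkl}x=x$ yields $\sigma_{ij}x=\sigma_{kl}\sigma_{ijkl}x=\sigma_{kl}x=(i,j)(k,l)x$, whence $\overline{(i,j)}x=(i,j)\sigma_{ij}x=(k,l)x$.

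The bulk of the work is (iv), which I would deduce from (i)--(iii) by a support-reduction argument; this bookkeeping is the main obstacle. Parts (i)--(iii) show that every reflection acts on $x$ as one of the packings $p_e:=(a,b)x=\overline{(l,e)}x$ or $q_e:=(l,e)x=\overline{(a,b)}x$ (for a line $\{e,a,b\}$), indexed by the seven vertices $e$; an unsigned $(i,j)$ gives $p_e$ when $i,j$ are vertices on a line $\{i,j,e\}$, and $q_i$ when $j=l$. It therefore suffices to rewrite each $p_e,q_e$ using a reflection supported on $\{1,\dots,5\}$. If both $e,l\in\{1,\dots,5\}$, use the representatives $\overline{(l,e)}x$ and $(l,e)x$ directly. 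Otherwise at least one of $e,l$ lies in $\{6,7,8\}$, so at most two of the six vertices $\eight\setminus\{e,l\}$ lie in $\{6,7,8\}$, i.e.\ at least four lie in $\{1,\dots,5\}$; as the three lines through $e$ split these six vertices into three pairs, the pigeonhole principle produces a pair $\{a,b\}\subseteq\{1,\dots,5\}$, giving $p_e=(a,b)x$ and $q_e=\overline{(a,b)}x$ with $a,b\le 5$. The case split on the locations of $e$ and $l$ matches each unsigned transposition to a reflection on $\{1,\dots,5\}$ of the correct signed or unsigned type, completing (iv).
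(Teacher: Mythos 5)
Your proof is correct, but it reaches part (i) by a genuinely different route from the paper. The paper disposes of (i) and (ii) by a symmetry reduction: since $S_8$ is transitive on the relevant configurations, it suffices to verify the two identities for the single case $x=x_0$, $l=8$, $e=1$, which is done by listing the eight components of $\slm(x_0)$ and their images. You instead give a uniform structural argument: working with the explicit affine action $\overline{(i,j)}\colon v\mapsto (i,j)v+\ep_i+\ep_j$ on $\lamm$ (which is indeed the action the paper's Lemma \ref{lem:d8act} sets up, as confirmed by its explicit computations), you encode the four triples avoiding $e$ by vectors in $\{\pm\}^3$, observe that they form a full parity class, and note that $(a_1,b_1)$ flips one coordinate while $\overline{(l,e)}$ flips all three, each carrying the class onto its complement. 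This buys a conceptual explanation of \emph{why} the identity holds for every packing at once, at the cost of more setup; the paper's version is shorter but opaque. One spot to tighten: the second identity of (i) does not follow from ``the identical computation with $l,e$ interchanged'' --- for $\overline{(a_1,b_1)}$ the four triples $T_k$ are fixed and it is instead the basepoint and the three $e$-triples that move (to $v_e$ and the $v_{l a_t b_t}$), so a separate, though equally routine, verification in the same framework is needed (alternatively it follows from the first identity together with your own observation in (iii) that $\sigma_{lea_1b_1}$ fixes $x$). Your (iii) is more roundabout than necessary --- the case where $i,j$ are both vertices is already the second identity of (i) applied to the line $\{i,j,k\}$ --- but it is valid; your (iv) is essentially the paper's pigeonhole argument, reorganized around the seven values $p_e,q_e$ rather than around the location of the basepoint.
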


\begin{proof}
By symmetry, it suffices to verify (i) and (ii) for a specific choice of $x$, $l$, and $e$. We will consider 
the case $l = 8$, $x = x_0$, and $e = 1$.

We have reduced (i) to the case $\overline{(1, 8)}x_0 = (2, 7)x_0$ and $\overline{(2, 7)}x_0 = (1, 8)x_0$.
The action of $\overline{(1, 8)}$ on $\lamm$ sends $$
\{ v_{127},
v_{136},
v_{145},
v_{235},
v_{246},
v_{347},
v_{567},
v_{8}\} \quad \text{to} \quad 
\{ v_{127},
v_{136},
v_{145},
v_{467},
v_{357},
v_{256},
v_{234},
v_{8}\}
,$$ where we have listed the eight components in the corresponding order. By inspection, the action of $(2, 7)$ on
$x_0$ has the same effect. Similarly, the action of $\overline{(2, 7)}$ on $\lamm$ sends $$
\{ v_{127},
v_{136},
v_{145},
v_{235},
v_{246},
v_{347},
v_{567},
v_{8}\} \quad \text{to} \quad
\{ v_{1},
v_{458},
v_{368},
v_{235},
v_{246},
v_{347},
v_{567},
v_{278}\}
,$$ which by inspection agrees with the action of $(1, 8)$ on $x_0$. This completes the proof of (i).

The reason that $(a_1, b_1)$, $(a_2, b_2)$, and $(a_3, b_3)$ have the same effect on $x$ in (ii) is that they
all have the same effect as the action of $\overline{(l, e)}$ on $x$, by (i). Similarly,
$\overline{(a_1, b_1)}$, $\overline{(a_2, b_2)}$, and $\overline{(a_3, b_3)}$ all have the same effect on $x$
as $(l, e)$, which proves (ii).

If we have $l \in \{i, j\}$ then (iii) follows from the first assertion of (i); if not, then (iii) follows from
the second assertion of (i).

To prove (iv), let us first suppose that $l \in \{i, j\}$. Define $e$ so that $\{l, e\} = \{i, j\}$.
Part (iii) provides three signed transpositions, $\overline{(a_1, b_1)}$, $\overline{(a_2, b_2)}$, and 
$\overline{(a_3, b_3)}$, that have the same effect on $x$ as $(l, e)$. The set $\{l, e, a_1, b_1, a_2, b_2, a_3, b_3\}$
has size $8$, so at least one of the transpositions $(l, e)$, $\overline{(a_1, b_1)}$, $\overline{(a_2, b_2)}$, and 
$\overline{(a_3, b_3)}$ is of the form $(c, d)$ or $\overline{(c, d)}$ such that $\{c, d\} \cap \{6, 7, 8\} = \emptyset$, 
as required.

It remains to deal with the case where $l \not\in \{i, j\}$. Let $e$ be the label for which $\{e, i, j\}$
is a triple of $x$, and let $\{e, a, b\}$ and $\{e, a', b'\}$ be the other triples that contain $e$.
If at least one of the pairs $\{i, j\}$, $\{a, b\}$ and $\{a', b'\}$ is disjoint from $\{6, 7, 8\}$, we define
$\{c, d\}$ to be this pair, and it follows from (iii) that $(c, d)$ and $(i, j)$ have the same effect on $x$,
proving (iv). Otherwise, the pair $\{l, e\}$ is disjoint from $\{6, 7, 8\}$, and we define $\{c, d\} = \{l, e\}$.
It follows from (i) that $\overline{(c, d)}$ and $(i, j)$ have the same effect on $x$, and this completes the proof.
\end{proof}

There is a second way to define the action of $W(D_8)$ on packings, which will be useful later. This is
defined as follows.

Let $R$ be the ring $\Q[x_1, x_2, \ldots, x_8]/I$, where $I$ is the ideal $$
I = \langle x_1^2 - 1, \  x_2^2 - 1, \  \ldots, \ x_8^2 - 1, \ x_1x_2x_3x_4x_5x_6x_7x_8 + 1 \rangle
,$$ so that the image of each $x_i$ in $R$ squares to $1$, and the product of all eight $x_i$ is $-1$.
As a $\Q$-vector space, $R$ is $128$ dimensional, and is the direct sum of five subspaces $R_0, \ldots, R_4$ 
of dimensions $1$, $8$, $28$, $56$, and $35$, where $R_i$ is the image of the span of the 
monomials of degree $i$. The summand $R_4$ is spanned by the $70$ products of four distinct monomials, 
subject to the relations $x_A = -x_B$, where $\{A\ | \ B\} \in P_4$ and we write $x_{\{a_1, a_2, a_3, a_4\}}$ for 
$x_{a_1} x_{a_2} x_{a_3} x_{a_4}$. 

The group $W(D_8)$ acts on the set of $35$ pairs of signed monomials of $R_4$. We will represent each signed 
monomial by its positive expression, so that $x_C$ is represented as $x_C$, and $-x_C$ is represented as $x_D$, 
where $\{C\ | \ D\} \in P_4$.

\begin{defn}\label{def:splitrep}
Let $x$ be an $\eight$-labelled Fano plane with basepoint $l$ and with triples $A_1, A_2, \ldots, A_7$, where
$A_i \in \binom{\eight}{3}$. We define $\mu(x)$ to be the set of $7$ positive monomials in $R_4$ given by $$
\mu(x) = \{x_{A_1 \cup \{l\}}, x_{A_2 \cup \{l\}}, \ldots, x_{A_7 \cup \{l\}}\} 
.$$
\end{defn}

The spreads in the packing $x$ may be recovered from the set $\mu_x$ by ignoring the indeterminate that is 
common to all seven monomials.

\begin{lemma}\label{lem:sevenact}
Let $R$ be the ring defined above.
\begin{itemize}
\item[{\rm (i)}]{The action of the group $W(D_8)$ on the indeterminates $x_1, x_2, \ldots, x_8$ by signed 
permutations induces an action of $W(D_8)$ on $R$ by $\Q$-algebra automorphisms.}
\item[{\rm (ii)}]{The group $W(D_8)$ permutes the set $\{\mu(x) : x \in \qpx\}$, and the induced action 
on $\qpx$ agrees with the action of Proposition \ref{prop:signact}.}
\end{itemize}
\end{lemma}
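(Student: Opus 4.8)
The plan is to prove Lemma~\ref{lem:sevenact} by first establishing that the $W(D_8)$-action on the indeterminates respects the defining relations of $R$ (part (i)), and then verifying that this action permutes the sets $\mu(x)$ and that the resulting permutation of $\qpx$ matches the one already characterized in Proposition~\ref{prop:signact} (part (ii)).

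For part (i), I would start from the fact that $W(D_8)$ acts on the monomials $x_1, \ldots, x_8$ by signed permutations, where a sign change on $x_i$ means sending $x_i \mapsto -x_i$ (equivalently, multiplying by $-1$). Because signed permutations are defined by their action on the generators of the polynomial ring $\Q[x_1, \ldots, x_8]$, each extends uniquely to a $\Q$-algebra automorphism of that ring. To descend to $R$, it suffices to check that the ideal $I$ is preserved. The relations $x_i^2 - 1$ are individually permuted among themselves (and a sign change fixes each $x_i^2 - 1$ since $(-x_i)^2 = x_i^2$), so their span is stable. The subtle relation is $x_1 x_2 \cdots x_8 + 1$: an arbitrary permutation fixes the product $x_1 \cdots x_8$, while a sign change multiplies it by $(-1)^{(\text{number of signs flipped})}$. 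Since $W(D_8)$ only allows an \emph{even} number of sign changes, the product $x_1 \cdots x_8$ is genuinely invariant, so $x_1 \cdots x_8 + 1 \mapsto x_1 \cdots x_8 + 1$ and the ideal is preserved. This even-sign-change condition is exactly where the $D_8$ (rather than $B_8$) structure enters, and I expect this to be the conceptual crux of part (i).

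For part (ii), the key observation is that the signed transposition $\overline{(a,b)}$ acts on the degree-$4$ monomial $x_{T}$ (for $T \in \binom{\eight}{4}$) by permuting the indeterminates $x_a, x_b$ within $T$ and flipping the signs of $x_a$ and $x_b$; using the relation $x_C = -x_D$ for $\{C \mid D\} \in P_4$, one can then track how the positive representatives of the $35$ signed monomial pairs transform under each Weyl-group generator. I would verify directly that $s_{\al_i}$ for $3 \le i \le 8$ acts on $\mu(x)$ as the place permutation $(i-2, i-1)$ on the labels, that $s_{\al_2}$ acts as $\overline{(1,2)}$, and that $s_\theta$ acts as $(7,8)$, thereby matching the generator actions recorded in Lemma~\ref{lem:d8act}. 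The cleanest way to organize this is to compare the action on $\mu(x)$ with the action on $\slm(x) \in \lamm$: under the identification $x_{A_i \cup \{l\}} \leftrightarrow v_{A_i}$ (ignoring the common basepoint indeterminate $x_l$, as noted after Definition~\ref{def:splitrep}), the degree-$4$ monomials of $R_4$ correspond precisely to the vectors of $V_4 \cong P_4$, and the sign relation $x_C = -x_D$ mirrors the coset identification $v_A = v_B$ for $\{A \mid B\} \in P_4$.

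Indeed, the main technical point is that $\mu$ factors through $\slm$ up to the explicit bijection between signed monomials and elements of $V_4$, so that the $W(D_8)$-action defined on $R_4$ is carried by this bijection to the $W(D_8)$-action on $\lamm$ from Lemma~\ref{lem:d8act}, which Theorem~\ref{thm:corresp} already identifies with an action on $\qpx$. Since Proposition~\ref{prop:signact} was itself derived from the same underlying $\lamm$-action, the two actions on $\qpx$ must coincide. The main obstacle I anticipate is bookkeeping: one must be careful that the \emph{positive-representative} convention for signed monomials is compatible with the \emph{basepoint-augmentation} convention in Definition~\ref{def:splitrep}, i.e., that adjoining the basepoint $l$ to each triple $A_i$ and then applying a generator yields the same sign-normalized monomial as applying the generator to $v_{A_i}$ and re-augmenting. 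Checking this compatibility on the three types of generators (unsigned transpositions fixing $l$, the signed transposition $\overline{(1,2)}$, and $s_\theta$) is the routine-but-delicate verification, and once it is in place both parts of the lemma follow.
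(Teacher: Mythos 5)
Your part (i) is correct and is exactly the paper's (one-line) argument: the relations $x_i^2-1$ are permuted among themselves, and $x_1\cdots x_8+1$ is fixed because only evenly many signs change.

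Part (ii) is where the proposal breaks down. The central device you propose --- that $\mu$ ``factors through $\slm$'' via the identification of the degree-$4$ monomial pairs $\{x_C,-x_C\}$ with $V_4\cong P_4$, so that the $W(D_8)$-action on $R_4$ is carried to the action on $\lamm$ of Lemma \ref{lem:d8act} --- does not exist as a $W(D_8)$-equivariant correspondence. There are $70$ signed monomials $\pm x_C$ in $R_4$ but only $64$ elements of $\lamm$; passing to the $35$ unordered pairs discards the sign, and the sign is precisely the data that distinguishes the eight packings within a coset-equivalence class (this is the whole point of Section \ref{sec:d7}). Moreover the $W(D_8)$-action of Lemma \ref{lem:d8act} does not even preserve $V_4\subset\lamp$ (sign changes act as translations, which change Hamming weight), and the augmentation $A_i\mapsto A_i\cup\{l\}$ depends on the packing through its basepoint $l$, so it is not induced by any single map of ambient spaces. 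A concrete symptom is your claim that one can ``verify that $s_\theta$ acts as $(7,8)$'' on $\mu(x)$: the algebra automorphism $\overline{(7,8)}$ ($x_7\mapsto -x_8$, $x_8\mapsto -x_7$) sends $\mu(x_0)$ to $\{x_{1278},x_{2458},x_{2368},x_{1468},x_{1358},x_{3478},x_{5678}\}$, whose common index is still $8$; this is $\mu((1,2)x_0)=\mu(\overline{(7,8)}x_0)$ in the sense of Proposition \ref{prop:signact}, and is \emph{not} $\mu((7,8)x_0)$, whose basepoint is $7$. So the verification you set as your target would fail, and the chain ``$R_4$-action $=$ Lemma \ref{lem:d8act} action $=$ Proposition \ref{prop:signact} action'' collapses at its first link. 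What actually has to be shown --- and what the paper does --- is that $S_8$ acts compatibly by definition, and that each signed transposition $\overline{(i,j)}$, acting as an algebra automorphism, reproduces the rules of Proposition \ref{prop:signact}(i); by the same $S_8$-symmetry reduction used there, this comes down to two explicit computations, namely $\overline{(1,8)}\mu(x_0)=(2,7)\mu(x_0)$ and $\overline{(2,7)}\mu(x_0)=(1,8)\mu(x_0)$. Your closing paragraph gestures at the right sign-bookkeeping check, but the framework you propose for organizing it is not sound, so the proof as written has a genuine gap.
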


\begin{proof}
Part (i) holds because the generators of the ideal $I$ are permuted among themselves by signed 
permutations that involve evenly many sign changes.

It follows from the definitions that permutations of $S_8$ act on the set $\{\mu(x) : x \in \qpx\}$ in a 
way that is compatible with the action on $\eight$-labelled Fano planes. The second assertion thus reduces
to verifying that signed transpositions in $W(D_8)$ act according to the rules of Proposition \ref{prop:signact} (i). 
As in the proof of Proposition \ref{prop:signact} (i), the verification can be reduced to the actions of 
$\overline{(1, 8)}$, $(1, 8)$, $\overline{(2, 7)}$, and $(2, 7)$ on $\mu(x_0)$.

We have $$
\mu(x_0) = \{
x_{1278}, \ x_{1368}, \ x_{1458}, \ x_{2358}, \ x_{2468}, \ x_{3478}, \ x_{5678}
\}
.$$ Writing the components in the corresponding order, we have \begin{align*}
\overline{(1, 8)} \mu(x_0) &= \{
x_{1278}, \ x_{1368}, \ x_{1458}, \ -x_{1235}, \ -x_{1246}, \ -x_{1347}, \ -x_{1567} 
\}\\ 
&= \{
x_{1278}, \ x_{1368}, \ x_{1458}, \ x_{4678}, \ x_{3578}, \ x_{2568}, \ x_{2348} 
\} \\
&= (2, 7) \mu(x_0)
,\end{align*} and \begin{align*}
\overline{(2, 7)} \mu(x_0) &= \{
x_{1278}, \ x_{1368}, \ x_{1458}, \ -x_{3578}, \ -x_{4678}, \ -x_{2348}, \ -x_{2568} 
\}\\ 
&= \{
x_{1278}, \ x_{1368}, \ x_{1458}, \ x_{1246}, \ x_{1235}, \ x_{1567}, \ x_{1347} 
\} \\
&= (1, 8) \mu(x_0)
.\end{align*} It follows that the action of $W(D_8)$ on the signed monomials in $R_4$ does induce an action
on the set $\{\mu(x) : x \in \qpx\}$, and that this induced action on packings agrees with the action in
Proposition \ref{prop:signact}. This completes the proof of (ii).
\end{proof}

A key feature of the packings of $PG(3, 2)$ that seems not to have been noticed before is that they form a quasiparabolic
set, in the sense of Rains--Vazirani \cite{rains13}. In \cite[\S6.3]{gx5}, T. Xu and the author proved that the set
$\qp_\Psi$ is a quasiparabolic set for the Weyl group $W(D_7)$. The correspondence of Theorem \ref{thm:corresp} then induces
a partial order on $\qpx$, and we will show (Theorem \ref{thm:qppack}) that this agrees with the partial order that we 
defined combinatorially in Section \ref{sec:lehmer}.

Quasiparabolic sets can be defined for any Coxeter group $W$ with generating set $S$ and set of reflections $T$. However, 
the only examples we study in this paper are associated with the Weyl group $W(D_8)$ and its parabolic subgroups, meaning 
subgroups generated by subsets of $S$. In the case of $W(D_8)$, we have $$
S = \{(i, i+1) : 1 \leq i < 8\} \cup \{\overline{(1, 2)}\} = \{s_2, s_3, \ldots, s_8, s_\theta\}
,$$ and $$
T = \{(i, j) : 1 \leq i < j \leq 8\} \cup 
\{\overline{(i, j)} : 1 \leq i < j \leq 8\} 
.$$ 

\begin{defn}\label{def:qpset}
Let $W$ be a Weyl group with generating set $S$ and set of reflections $T$. A {\it scaled $W$-set} is a pair $(X, \lambda)$, 
where $X$ is a $W$-set and $\lambda : X \rightarrow \Z$ is a function satisfying $|\lambda(sx) - \lambda(x)| \leq 1$ for all $s \in S$.
An element $x \in X$ is {\it $W$-minimal} (respectively, {\it $W$-maximal}) if $\lambda(sx) \geq \lambda(x)$ (respectively, 
$\lambda(sx) \leq \lambda(x)$) for all $s \in S$. We call $\lambda(x)$ the {\it height} of $x$.

A {\it quasiparabolic $W$-set} is a scaled $W$-set $X$ satisfying the following
two properties:
\begin{itemize}
\item[(QP1)]{for any $r \in T$ and $x \in X$, if $\lambda(rx) = \lambda(x)$, then $rx = x$;}
\item[(QP2)]{for any $r \in T$, $x \in X$, and $s \in S$, if $\lambda(rx) > \lambda(x)$ and $\lambda(srx) < \lambda(sx)$, then $rx = sx$.}
\end{itemize}

A quasiparabolic $W$-set is endowed with a partial order $\leq_Q$, called the {\it Bruhat order}. This is the smallest partial order 
such that $x \leq_Q rx$ whenever $x \in X$, $r \in T$, and $\lambda(x) \leq \lambda(rx)$.

We call a subgroup $H \leq W$ a {\it quasiparabolic subgroup} of $W$ if $H$ is the stabilizer of a minimal element in a
transitive quasiparabolic $W$-set.
\end{defn}


The following properties of quasiparabolic sets will be useful in the sequel.

\begin{theorem}[Rains--Vazirani \cite{rains13}]\label{thm:summary}
Let $W$ be a Coxeter group with generating set $S$, and let $X$ be a finite transitive quasiparabolic $W$-set.
\begin{itemize}
\item[{\rm (i)}]{The partially ordered set $(X, \leq_Q)$ has a unique minimal element, $x_0$, and a unique maximal
element, $x_1$.}
\item[{\rm (ii)}]{If $\bfw = s_{i_1} s_{i_2} \cdots s_{i_k}$ is a reduced expression for $w \in W$, then some 
subexpression $\bfu$ of $\bfw$ is a reduced expression for $u \in W$ such that $ux_0$ is a reduced expression
for $wx_0$.}
\item[{\rm (iii)}]{The height, $\lambda(x)$, of any $x \in X$ is equal to the smallest $k \geq 0$ such that we
have $x = w x_0$, where $w \in W$ satisfies $\ell(w) = k$.}
\item[{\rm (iv)}]{The height function $\lambda$ can be reconstructed from knowing the minimal element $x_0 \in X$,
together with its height $\lambda(x_0)$, and its stabilizer $\Stab_W(x_0)$.}
\item[{\rm (v)}]{Suppose that $W_I = \langle I \rangle$ is the standard parabolic subgroup of $W$ generated by
the subset $I \subset S$. If $H$ is a quasiparabolic subgroup of $W_I$, then $H$ is a quasiparabolic subgroup of $W$.}
\end{itemize}
\end{theorem}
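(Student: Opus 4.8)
The plan is to obtain each of (i)--(v) from the general structure theory of quasiparabolic sets in \cite{rains13}, supplying the short self-contained derivations where a statement follows immediately from Definition \ref{def:qpset} and citing the two genuinely structural inputs. The natural order is to prove the height formula first, since (ii), (iii), (iv) and half of (i) all rest on it.

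The height formula is the assertion that, once $x_0$ is the minimal element, $\lambda(x) - \lambda(x_0) = \min\{\ell(w) : wx_0 = x\}$, which is (iii). The inequality $\leq$ is automatic: since $|\lambda(sx) - \lambda(x)| \leq 1$ for each $s \in S$, any word $x = s_{i_1} \cdots s_{i_k} x_0$ forces $\lambda(x) \leq \lambda(x_0) + k$. For the reverse inequality one takes a minimal-length $w$ with $wx_0 = x$ and argues inductively, using (QP2) as an exchange condition, that along a reduced word for $w$ the height increases at every step; this is exactly the mechanism by which (QP1) and (QP2) force a quasiparabolic set to behave like a quotient of $W$ by length. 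With (iii) in hand, (iv) is immediate: the fibre $\{w : wx_0 = x\}$ is the coset $w\,\Stab_W(x_0)$, so $\lambda(x) = \lambda(x_0) + \min_{h \in \Stab_W(x_0)} \ell(wh)$ is determined by $x_0$, $\lambda(x_0)$ and $\Stab_W(x_0)$ alone.

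For (ii) I would run the standard subword-property induction on $\ell(w)$: given a reduced expression $s_{i_1} \cdots s_{i_k}$ for $w$, either left multiplication by $s_{i_1}$ raises the height of $s_{i_2} \cdots s_{i_k} x_0$, in which case I keep the letter, or it does not, in which case (QP2) identifies the offending reflection and lets me delete a letter, exactly paralleling the proof of the strong exchange condition for Coxeter groups. For (i), I would cite the uniqueness of the minimal element of a transitive quasiparabolic set from \cite{rains13}: the point is that a second minimal element $x'$ would equal $wx$ for some $w \neq 1$ of minimal length, and writing $w = su$ and comparing heights through (QP1) and (QP2) contradicts that minimality; finiteness, together with the symmetry that reflects $\lambda$ through the top of $X$, then yields the unique maximal element $x_1$.

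The main obstacle, and the part I would simply quote from \cite{rains13}, is (v). Here one must construct, from a transitive quasiparabolic $W_I$-set $Y$ with minimal element $y_0$, an induced $W$-set --- morally $W \times_{W_I} Y$, with height extended by the length of the minimal $W/W_I$-coset representative --- and then verify (QP1) and (QP2) for it. The difficulty is that a reflection $r \in T$ need not lie in $W_I$, so checking the axioms requires tracking how $r$ moves points across distinct $W_I$-cosets and how this interacts with the extended height; this compatibility is the technical heart of Rains--Vazirani's parabolic induction. Once it is granted, $H = \Stab_{W_I}(y_0)$ is visibly the stabilizer of the minimal element of the induced set, hence a quasiparabolic subgroup of $W$.
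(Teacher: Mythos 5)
Your proposal is correct, but it takes a more self-contained route than the paper, whose entire proof consists of pointing to five specific results in \cite{rains13} (Corollary 2.10, Theorem 2.8(ii), Corollary 2.13, Proposition 2.15, and Corollary 3.10, for parts (i)--(v) respectively). You instead reconstruct the arguments for (i)--(iv) in outline --- the scaled-set inequality plus a (QP2)-driven exchange argument for the height formula (iii), the coset observation for (iv), the subword/deletion induction for (ii), and minimal-length-plus-reversal for (i) --- and quote \cite{rains13} only for the parabolic induction underlying (v). These reconstructions do track the actual Rains--Vazirani proofs, so nothing is wrong; what the paper's approach buys is brevity and a precise map from each part to its source, while yours buys the reader a sense of \emph{why} the statements hold without opening the reference. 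The one place where your sketch leans on an unstated lemma is the maximal element in (i): the ``symmetry that reflects $\lambda$ through the top of $X$'' requires knowing that negating the height function of a quasiparabolic set again yields a quasiparabolic set, which is not obvious from the form of (QP2) and is itself a lemma in \cite{rains13}; if you were writing this out in full you would need to prove or cite that reversal property explicitly before invoking finiteness.
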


\begin{proof}
The five parts of the theorem follow immediately from 
\cite[Corollary 2.10]{rains13}, \cite[Theorem 2.8 (ii)]{rains13}, \cite[Corollary 2.13]{rains13},
\cite[Proposition 2.15]{rains13}, and \cite[Corollary 3.10]{rains13}, respectively.
\end{proof}

\begin{lemma}\label{lem:s8d7}
Identify $S_8$ with the subgroup of $W(D_8)$ generated by $S \backslash \{\overline{(1, 2)}\}$, and identify $W(D_7)$
with the subgroup of $W(D_8)$ generated by $S \backslash \{(7, 8)\}$.
\begin{itemize}
\item[{\rm (i)}]{The action of $S_8$ on $\eight$-labelled Fano planes makes the set $\qpx$ into a transitive 
quasiparabolic  $S_8 \cong W(A_7)$-set with minimal element $x_0$. The height function $\lambda(x)$ is equal to 
$\rootht(x)$, as defined in Definition \ref{def:theorder}.}
\item[{\rm (ii)}]{The action of $W(D_7)$ on $\qpx$ induced by its action the $E_8$ root system makes $\qpx$
into a transitive quasiparabolic $W(D_7)$-set with minimal element $x_0$ and height function $\rootht(x)$.}
\end{itemize}
\end{lemma}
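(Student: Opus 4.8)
The plan is to prove the two parts by different routes. For part (ii) I would transport the quasiparabolic structure already established in \cite[\S6.3]{gx5} across the bijection of Theorem \ref{thm:corresp}(v); for part (i) I would verify the Rains--Vazirani axioms of Definition \ref{def:qpset} directly for the pair $(\qpx, \rootht)$, using the combinatorial results of Section \ref{sec:lehmer}. The height claims then have to be reconciled, which is where the real work lies.

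For part (ii), the first step is to record that the bijection $\qp_\Psi \to \qpx$ of Theorem \ref{thm:corresp}(v) is $W(D_7)$-equivariant. By Lemma \ref{lem:d8act} the action of $W(D_8)$ on $\lamm \cong \qp^-_A$ is precisely the signed place-permutation action pulled back from $\Psi$ through $t_8 \circ \slp$, and by Lemma \ref{lem:sevenact} this agrees with the Fano-plane action on $\qpx$; hence the bijection intertwines the two $W(D_7)$-actions. Transport of structure then makes $\qpx$ a transitive quasiparabolic $W(D_7)$-set. Its minimal element is the image of the minimum $y_0$ of $\qp_\Psi$, which is $x_0$ by Example \ref{exa:thcthn} (there $t_8\slp(y_0) = \slm(x_0)$), and by Theorem \ref{thm:summary}(iii) its height is $x \mapsto \min\{\ell(w) : w \in W(D_7),\ wx_0 = x\}$.

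For part (i), transitivity of the $S_8$-action and $\rootht(x_0)=0$ give a transitive $S_8$-set with $x_0$ minimal. That $(\qpx,\rootht)$ is scaled is immediate: if $wx_0$ is reduced and $s \in S$, then $sx = (sw)x_0$ with $\ell(sw) = \ell(w)\pm 1$, so $|\rootht(sx)-\rootht(x)| \le 1$. Axiom (QP1) holds vacuously, since Proposition \ref{prop:combo}(iv) shows every transposition changes $\rootht$ by an odd number, so $\rootht(rx)=\rootht(x)$ never occurs. Once (QP2) is in hand, the assertion $\lambda = \rootht$ is automatic from Theorem \ref{thm:summary}(iii) together with Definition \ref{def:theorder}. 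I would attack (QP2) through the signed/unsigned dictionary of Proposition \ref{prop:signact}: for a fixed packing every transposition acts as some reflection of $W(D_7)$ (indeed of $W(D_5)$), and conversely, so that the reflection-neighbourhood $\{rx : r \in T\}$ is \emph{the same} set whether computed in $S_8$ or in $W(D_7)$. For an instance of (QP2) whose generator $s$ lies in $S\cap W(D_7)$ this reduces the claim to the (QP2) instance furnished by part (ii).

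The main obstacle is therefore twofold: the single generator $s = s_\theta = (7,8)$, which lies in $S_8$ but not in $W(D_7)$ and so is not covered by the transported structure, and the matching of the two height functions. I expect to dispose of the $(7,8)$ case either by a direct verification after using transitivity to move the relevant configuration into a normal form based at $x_0$, or by exploiting the symmetry of $\qpx$ interchanging the basepoint with the label $8$ (the same symmetry that underlies the use of $t_8$ in Theorem \ref{thm:corresp}(v)). Once both quasiparabolic structures are established, I would close the loop by observing that the coincidence of reflection-neighbourhoods forces the $S_8$- and $W(D_7)$-Bruhat orders to have the same covering relations; being graded orders with the common minimum $x_0$, they have a common rank function, which identifies the transported $W(D_7)$-height of part (ii) with $\rootht$. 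Establishing this height compatibility cleanly, without circular dependence between the two (QP2) verifications, is the step I anticipate being the most delicate.
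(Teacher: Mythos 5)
Your plan for part (ii) begins the same way as the paper (transport the quasiparabolic structure on $\qp_\Psi$ from \cite[\S6.3]{gx5} across Theorem \ref{thm:corresp}), and your observation that (QP1) is vacuous by Proposition \ref{prop:combo}(iv) is correct. But the proposal has a genuine gap at exactly the point you flag as ``the most delicate'': neither the (QP2) verification for part (i) nor the identification $\lambda_D = \rootht$ in part (ii) is actually carried out, and the two are entangled in a circle you do not break. Your reduction of the $S_8$-instance of (QP2) to the $W(D_7)$-instance via Proposition \ref{prop:signact} presupposes that the transported $W(D_7)$-height agrees with $\rootht$, while your proposed proof of that agreement (``same reflection-neighbourhoods, hence same covering relations, hence same rank function'') presupposes that both quasiparabolic structures, and in particular the $S_8$ one, are already in place. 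The leftover generator $s=(7,8)\notin W(D_7)$ is likewise only ``expected to be disposed of.'' As written, part (i) is not proved.

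The paper avoids all of this. For part (i) it does not verify the axioms at all: the stabilizer $H=\Stab_{S_8}(x_0)$ lies in $S_7$ (the basepoint of $x_0$ is $8$), is a quasiparabolic subgroup of $S_7$ by \cite[\S6.2]{gx5}, and is therefore a quasiparabolic subgroup of $S_8$ by Theorem \ref{thm:summary}(v); the height function is then forced to be $\min\{\ell(w):wx_0=x\}=\rootht(x)$ by Theorem \ref{thm:summary}(iii) and Definition \ref{def:theorder}. For the height comparison in part (ii) it gives a direct reduced-expression argument: writing $x=w_8w_5w_3w_2x_0$ via the code $\leh$, the single occurrence of $(7,8)$ can be commuted to the right and replaced using $(7,8)x_0=\overline{(1,2)}x_0$ to produce a $W(D_7)$-expression of length $\rootht(x)$, giving $\lambda_D(x)\le\rootht(x)$; the reverse inequality follows from the deletion property (Theorem \ref{thm:summary}(ii)) applied to that expression, swapping $\overline{(1,2)}$ back for $(7,8)$ to contradict minimality of the $S_8$-expression. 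If you want to salvage your outline, you should either adopt the paper's route for part (i), or prove $\lambda_D=\rootht$ first by the explicit expression-manipulation above and only then run your reduction of (QP2); the covering-relation argument alone will not close the circle.
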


\begin{proof}
Let $H$ be the stabilizer in $S_8$ of $x_0$. Because the basepoint of $x_0$ is $8$, it follows that we have 
$H \leq S_7$, where we identify $S_7$ with the stabilizer of $8 \in \eight$. It follows from 
\cite[\S6.2]{gx5} that $H$ is a quasiparabolic subgroup of $S_7$, and Theorem \ref{thm:summary} (v) then 
implies that $H$ is a quasiparabolic subgroup of $S_8$. By Theorem \ref{thm:summary} (iii), this endows
$\qpx$ with the structure of a transitive quasiparabolic $S_8$-set with height function given by $$
\lambda(y) = \lambda(x_0) + \min_{wx_0 = y} \left\{ \ell(w) \right\}
.$$ We now have $\lambda(x) = \rootht(x)$ by definition of the function $\rootht$ (Definition \ref{def:theorder}),
which proves the assertion of (ii)(a).

By \cite[\S6.3]{gx5}, the set $\qp_\Psi$ is a transitive quasiparabolic $W(D_7)$-set with respect to the action
of $W(D_7) \cong \langle S \backslash \{(7, 8\} \rangle$ on the $E_8$ root system. Theorem \ref{thm:corresp} then
implies that $\qpx$ is a quasiparabolic $W(D_7)$-set with minimal element $x_0$. (The motivation for the 
definitions of $x_0$ and $x_1$ is that they correspond to the minimal and maximal elements of this $W(D_7)$-set.) 
Let $\lambda_D$ denote the height function of this $W(D_7)$-set. It remains to show that we have 
$\lambda_D(x) = \rootht(x)$.

Proposition \ref{prop:signact} (i) implies that we have $\overline{(1, 2)}x_0 = (7, 8)x_0$. Given $x \in \qpx$, we
have a reduced expression $w x_0 = w_8 w_5 w_3 w_2 x_0$ corresponding to the code $\leh$, so that $\rootht(x) = 
\ell(w_8 w_5 w_3 w_2)$. 

We first prove that $\lambda_D(x) \leq \rootht(x)$; in other words, that there is a $W(D_7)$-expression $\bfx$ for 
$x$ of length at most $\rootht(x)$. If $w_8 = 1$ then the expression $\bfx = w_5 w_3 w_2$ satisfies the hypothesis.
If not, then $w_8$ contains the only appearance of the generator $(7, 8)$ in the expression, and we can write
$w_8 = u (7, 8)$, where $\ell(u) = \ell(w_8) - 1$ and $u \in S_8 \cap W(D_7)$. This implies that $$
w_8 w_5 w_3 w_2 x_0 = u (7, 8) w_5 w_3 w_2 x_0 = u w_5 w_3 w_2 (7, 8) x_0 = u w_5 w_3 w_2 \overline{(1, 2)} x_0
,$$ which proves that $\lambda_D(x) \leq \rootht(x)$ because we have 
$\bfx = uw_5w_3w_2 \overline{(1, 2)} \in W(D_7)$.

Suppose for a contradiction that $\bfx$ is not reduced. By Theorem \ref{thm:summary} (ii), we can convert $\bfx$ to 
a reduced expression $\bfy$ by deleting some of the generators. If the single occurrence of $\overline{(1, 2)}$
survives the deletion process, we replace it by an occurrence of $(7, 8)$ and call the resulting expression $\bfz$; 
if not, we define $\bfz = \bfy$. In any case, the element $z \in S_8$ expressed by $\bfz$ gives an $S_8$-expression 
for $zx_0 = yx_0 = x$ that is shorter than the reduced $S_8$-expression $wx_0$, which is a contradiction. We conclude 
that there exists an $S_8$-expression for $x$ has the same length as a $W(D_7)$-reduced expression
for $x$, which proves that $\rootht(x) \leq \lambda_D(x)$ by Theorem \ref{thm:summary} (iii) and completes the proof. 
\end{proof}

Using Lemma \ref{lem:s8d7}, we can make $\qpx$ into a quasiparabolic $W(D_8)$-set by patching together the 
$S_8$ and $W(D_7)$ actions. A key observation is that each element of the generating set $S$ of $W(D_8)$ is
either a generator of $S_8$, or a generator of $W(D_7)$, or both.

\begin{theorem}\label{thm:qppack}
Let $W(D_8)$ be a Weyl group of type $D_8$ with the usual generating set $S$, acting on the set $\qpx$ of packings
of $PG(3, 2)$ as in Proposition \ref{prop:signact} or Lemma \ref{lem:sevenact}, and let $\rootht$ be the
statistic on packings of Definition \ref{def:theorder}.
\begin{itemize}
\item[{\rm (i)}]{The set $\qpx$ is a transitive quasiparabolic $W(D_8)$-set, with minimal element $x_0$ and height 
function $\lambda(x) = \rootht(x)$.}
\item[{\rm (ii)}]{For each of the following parabolic subgroups $W_I \leq W(D_8)$, the set $\qpx$ is a transitive 
quasiparabolic $W_I$-set with minimal element $x_0$ and height function $\lambda(x) = \rootht(x)$, and the 
quasiparabolic order in each case agrees with $(\qpx, \leq)$:
\begin{itemize}
\item[{\rm (a)}]{$S_8 \cong W(A_7)$, generated by $I = S \backslash \{\overline{(1, 2)}\}$;}
\item[{\rm (b)}]{$W(D_5)$, generated by $I = S \backslash \{(5, 6), (6, 7), (7, 8)\}$;}
\item[{\rm (c)}]{any standard parabolic subgroup $W_I$ satisfying $W(D_5) \leq W_I \leq W(D_8)$, including $W(D_6)$,
generated by $I = S \backslash \{(6, 7), (7, 8)\}$, and $W(D_7)$, generated by $I = S \backslash \{(7, 8)\}$.}
\end{itemize}}
\end{itemize}
\end{theorem}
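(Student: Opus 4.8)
The plan is to obtain part (i) by verifying the axioms of Definition \ref{def:qpset} directly for $W(D_8)$ with height function $\rootht$, leveraging the two quasiparabolic structures already established in Lemma \ref{lem:s8d7} — one for $S_8 \cong W(A_7)$ and one for $W(D_7)$, both with the \emph{same} height $\rootht$. The decisive tool is the pointwise reduction of reflections in Proposition \ref{prop:signact}: for every reflection $r \in T$ and every $x \in \qpx$, part (iii) produces an unsigned transposition $r' \in S_8$ with $r'x = rx$, while parts (iii) and (iv) together produce a reflection $r'' \in W(D_5) \subseteq W(D_7)$ with $r''x = rx$. I would record this as a preliminary observation, as it is used throughout. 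The scaling condition is then immediate, since each generator $s \in S$ lies in $S_8$ or in $W(D_7)$ (the key observation preceding the theorem) and $\rootht$ is scaled for each subgroup; that $x_0$ is $W(D_8)$-minimal follows from $\rootht \geq 0$ with $\rootht(x_0) = 0$; and transitivity is inherited from the transitive action of $S_8 \leq W(D_8)$.

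For (QP1), given $r \in T$ and $x$ with $\lambda(rx) = \lambda(x)$, I would replace $r$ by the unsigned $r' \in S_8$ with $r'x = rx$; then $\lambda(r'x) = \lambda(x)$ forces $r'x = x$ by (QP1) for the $S_8$-structure, hence $rx = x$. The crux is (QP2). The point I would exploit is that the hypotheses and conclusion of (QP2) for a triple $(r, s, x)$ depend on $r$ only through the points $rx$ and $srx = s(rx)$, so replacing $r$ by any reflection acting identically on $x$ yields an equivalent instance. I then split on $s$: if $s \in S_8$, replace $r$ by the unsigned $r' \in S_8$ with $r'x = rx$ and invoke (QP2) for $S_8$; if $s \in W(D_7)$, replace $r$ by $r'' \in W(D_7)$ with $r''x = rx$ and invoke (QP2) for $W(D_7)$. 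Since every generator of $W(D_8)$ lies in one of these two subgroups (and is a Coxeter generator there), this settles (QP2) in all cases and completes (i); Theorem \ref{thm:summary}(iii) then shows the verified height $\rootht$ automatically equals the minimal $W(D_8)$-word length to $x_0$.

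For part (ii) I would first prove that $W(D_5)$ acts transitively by a telescoping argument: writing an arbitrary $y \in \qpx$ as $t_k \cdots t_1 x_0$ with the $t_i$ transpositions (possible since $S_8$ is transitive), Proposition \ref{prop:signact}(iv) lets me replace each $t_i$, \emph{as it acts on the intermediate packing}, by a $W(D_5)$-reflection, exhibiting $y$ in the $W(D_5)$-orbit of $x_0$. Hence every $W_I$ with $W(D_5) \leq W_I \leq W(D_8)$ is transitive, and $S_8$ is transitive by hypothesis. Quasiparabolicity for each such $W_I$ is then purely formal by restriction: the reflection set $T_I = T \cap W_I$ and generators $I$ are subsets of $T$ and $S$, so the axioms verified in part (i) specialize to $W_I$, and $x_0$ remains minimal because $I \subseteq S$. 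This makes $\qpx$ a transitive quasiparabolic $W_I$-set with height $\rootht$, with no appeal needed to the lifting result Theorem \ref{thm:summary}(v).

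It remains to match the orders with $(\qpx, \leq)$. Writing $\leq_Q^{W_I}$ for the quasiparabolic order of $W_I$, I would argue that for $W(D_8)$ the generating relations coincide with those of Definition \ref{def:theorder}: by Proposition \ref{prop:signact}(iii) every pair $(x, rx)$ with $\rootht(rx) \geq \rootht(x)$ equals a pair $(x, tx)$ for a transposition $t$, and conversely every transposition is a reflection, so the reflexive–transitive closures agree. For an intermediate $W_I \supseteq W(D_5)$, the inclusion $T_I \subseteq T$ gives $\leq_Q^{W_I} \subseteq (\qpx, \leq)$, while Proposition \ref{prop:signact}(iv) realizes each generating transposition relation of $(\qpx, \leq)$ through a $W(D_5)$-reflection in $T_I$, giving the reverse inclusion; the case $W_I = S_8$ is direct, since the reflections of $S_8$ are exactly the transpositions of Definition \ref{def:theorder}. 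I expect the main obstacle to be the verification of (QP2) — specifically, the recognition that an instance is unchanged when $r$ is replaced by any reflection acting identically on $x$, which is precisely what lets each instance be pushed into whichever of $S_8$ or $W(D_7)$ contains $s$. A secondary subtlety is the transitivity of $W(D_5)$, where Proposition \ref{prop:signact}(iv) must be applied afresh at each intermediate packing rather than once globally.
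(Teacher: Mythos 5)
Your proposal is correct and follows essentially the same route as the paper: both verify the scaled-set axioms and (QP1)/(QP2) for $W(D_8)$ by splitting on whether the generator $s$ lies in $S_8$ or in $W(D_7)$, using Proposition \ref{prop:signact} to replace the reflection $r$ by one of the appropriate subgroup acting identically on $x$ (so that Lemma \ref{lem:s8d7} applies), and both obtain transitivity of $W(D_5)$ by converting each transposition, packing by packing, into a $W(D_5)$-reflection via Proposition \ref{prop:signact}(iv). Your explicit observation that a (QP2) instance depends on $r$ only through $rx$, and your spelled-out matching of the quasiparabolic orders with $(\qpx,\leq)$, are details the paper leaves implicit but are exactly the right justifications.
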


\begin{proof}
Let $S$ be usual set of generators for $W(D_8)$. Choose a generator $s \in S$, a (possibly signed) transposition 
$r \in T$, and a packing $x \in \qpx$. 

Suppose first that $s \in S_8$, meaning that $s$ is an unsigned simple transposition. By Proposition \ref{prop:signact} (i), 
there exists a transposition $r' \in S_8$ such that $rx = r'x$. By Lemma \ref{lem:s8d7} (i), the definition of
scaled $W$-set and conditions (QP1) and (QP2) are all satisfied with respect to $W = S_8$, $s$, $x$, $r'x$
and the function $\rootht$. It follows that the definition is also satisfied with respect to $W = W(D_8)$, $s$, $x$,
$rx$, and the function $\rootht$. On the other hand, if $s \in W(D_7)$, we can apply a similar argument based on
Lemma \ref{lem:s8d7} (ii) and Proposition \ref{prop:signact} (ii), which completes the proof of (i).

Part (ii)(a) follows from Lemma \ref{lem:s8d7} (i), and the statements (b)--(d) of (ii) all follow by restriction 
from (i) except for the claims about transitivity. Proposition \ref{prop:signact} (iv) implies that whenever $t \in S_8$ 
is a transposition and $x \in \qpx$ is a packing, there exists a reflection $r \in W(D_5)$ such that $tx = rx$.
The transitivity in (ii)(b) now follows from the transitivity of (ii)(a), and (ii)(c) is immediate from (ii)(b).
\end{proof}

\begin{rmk}\label{rmk:d5stab}
The group $W(D_5)$ has order $1920$, making it considerably smaller than $A_8$ or $S_8$. The stabilizer of $x_0$ in 
$W(D_5)$ is elementary abelian of order $8$, generated by $(1\ 2)(3\ 4)$, $(1\ 3)(2\ 4)$, and 
$\overline{(1\ 2)}\,\overline{(3\ 4)}$. 
\end{rmk}

\begin{defn}\label{def:oddeven}
Let $x \in \qpx$ be a packing, regarded as an $\eight$-labelled Fano plane. Let $e \in \eight$ be the highest 
label appearing in $x$, and let $$
\{\{e, a_1, b_1\},\ \{e, a_2, b_2\}, \ \{e, a_3, b_3\}\}
,$$ be the three triples of $x$ that contain $e$, ordered so that we have $a_i < b_i$ for all $i$. Let $f$ be the
point of intersection of the line through $\{a_1, a_2\}$ with the line through $\{b_1, b_2\}$. We call the label
$e$ of $x$ {\it odd} (respectively, {\it even}) if $a_1 a_2 a_3$ is a triple and we have $f = a_3$ (respectively,
$b_1 b_2 b_3$ is a triple and we have $f = b_3$). The {\it parity} of a label $e$ is its odd or even status.
\end{defn}

\begin{exa}\label{exa:oddeven}
The labelled Fano plane $$
\{124, \ 235, \ 346, \ 457, \ 156, \ 267, \ 137\}
$$ in Figure \ref{fig:oddeven} is fixed by the $7$-cycle $(1\ 2\ 3\ 4\ 5\ 6\ 7)$, and is sometimes used to illustrate the 
structure constants of the octonions. It has four odd labels, $1$, $3$, $5$, and $7$, witnessed by the triples $$
a_1 a_2 a_3 = 235, \ 124, \ 124, \ \text{and} \ 124
,$$ respectively, and three even labels, $2$, $4$, and $6$, witnessed by the triples $$
b_1 b_2 b_3 = 457, \ 267,\ \text{and}\ 457,
$$ respectively.

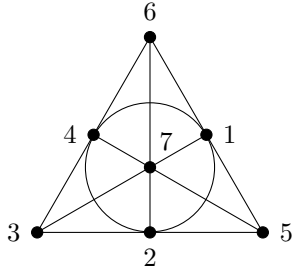
\begin{figure}
\begin{center}
\begin{tikzpicture}[
mydot/.style={
  draw,
  circle,
  fill=black,
  inner sep=1.5pt}
]
\draw
  (0,0) coordinate (A) --
  (3,0) coordinate (B) --
  ($ (A)!.5!(B) ! {sin(60)*2} ! 90:(B) $) coordinate (C) -- cycle;
\coordinate (O) at
  (barycentric cs:A=1,B=1,C=1);
\draw (O) circle [radius=3*1.717/6];
\draw (C) -- ($ (A)!.5!(B) $) coordinate (LC); 
\draw (A) -- ($ (B)!.5!(C) $) coordinate (LA); 
\draw (B) -- ($ (C)!.5!(A) $) coordinate (LB); 
\foreach \Nodo in {A,B,C,O,LC,LA,LB}
  \node[mydot] at (\Nodo) {};    
  \node [left=0.1cm of A] {$3$};
  \node [right=0.1cm of B] {$5$};
  \node [above=0.1cm of C] {$6$};
  \node [right=0.1cm of LA] {$1$};
  \node [left=0.1cm of LB] {$4$};
  \node [below=0.1cm of LC] {$2$};
  \node [above right=0.1cm and 0.001cm of O] {$7$};
\end{tikzpicture}
    \caption{The labelled Fano plane from Example \ref{exa:oddeven}}
\label{fig:oddeven}
\end{center}
\end{figure}
\end{exa}

The next result gives a method to calculate the height of a packing using only the combinatorics of Fano planes,
and proves that the action of the longest element of $S_8$ on the packings gives an antiautomorphism of
the partial order $(\qpx, \leq)$.

\begin{prop}\label{prop:nonrec}
Let $x$ be an $\eight$-labelled Fano plane with basepoint $l$. Let $h(x) = H(x) + (8-l)$, where $H(x)$ is the 
number of odd labels in $x$. Let $z_0= (1\ 8)(2\ 7)(3\ 6)(4\ 5) \in S_8$.
\begin{itemize}
\item[{\rm (i)}]{Every label in $x$ has a well-defined parity.}
\item[{\rm (ii)}]{Every label in the packing $x_0$ is even, and every label in the packing $x_1$ is odd.
We have $h(x_0) = 0$ and $h(x_1) = 14$.}
\item[{\rm (iii)}]{If $s = (i, i+1) \in S_8$ is a simple reflection, then we have $h(sx) = h(x) \pm 1$.}
\item[{\rm (iv)}]{We have $\rootht(x) = h(x)$.}
\item[{\rm (v)}]{Two packings are in the same $A_8$-orbit if and only if their heights are congruent modulo $2$.}
\item[{\rm (vi)}]{For any packing $x \in \qpx$, any reflection $r \in W(D_8)$, and any pure sign change $t \in W(D_8)$,
we have $\rootht(rx) = \rootht(x) + 1 \mod 2$ and $\rootht(tx) = \rootht(x) \mod 2$.}
\item[{\rm (vii)}]{For any packing $x \in \qpx$, we have $\rootht(x) + \rootht(z_0 x) = 14$.}
\item[{\rm (viii)}]{If $x, y \in \qpx$ are packings such that $x \leq y$, then we have $z_0 y \leq z_0 x$.}
\end{itemize}
\end{prop}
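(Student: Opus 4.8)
The plan is to treat the eight parts in the stated order, since each later part leans on the earlier ones. The combinatorial engine for (i) is a single observation about the Fano plane: fixing a label $e$, the three lines through the point $P$ carrying $e$ partition the remaining six points into three pairs, and choosing one point from each pair gives a \emph{transversal}; every line missing $P$ is such a transversal, and two distinct lines meet in a point, so a transversal and its complement cannot both be lines. Counting the four lines off $P$ among the four complementary pairs of transversals shows exactly one member of each pair is a line. Applying this to $(A,B)$, where $A$ is the set of smaller labels and $B$ the set of larger labels in the three pairs, exactly one of $A,B$ is a line. I would then check that the point $f$ of Definition \ref{def:oddeven} always lies in the third pair and equals $a_3$ precisely when $A$ is a line: if $A$ is a line then the line through $b_1,b_2$ is a transversal through $b_1,b_2$ whose third point cannot be $b_3$ (as $B$ is then not a line), so it is $a_3$, whence $f=a_3$. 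Thus ``$e$ odd'' is equivalent to ``$A$ is a line'', a condition visibly independent of the ordering of the pairs, giving (i). For (ii) I would run this criterion on the seven labels of $x_0$ and of $x_1$ and read off $h(x_0)=0+0=0$ and $h(x_1)=7+7=14$.

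The heart of the argument, and the step I expect to be the main obstacle, is (iii). The key reduction is that $s=(i,i+1)$ reverses the order of exactly the values $i$ and $i+1$ and preserves all other comparisons, so in the ``smaller label'' selection a pair changes its chosen point only if its two labels are exactly $\{i,i+1\}$. I would split into two cases. If neither $i$ nor $i+1$ is the basepoint, the labels $i,i+1$ sit on points $P_i,P_{i+1}$ spanning a line $\ell^*$ with third point $P^*$; the only point whose selected set can change is $P^*$, for which the selection on $\ell^*$ switches from $P_i$ to $P_{i+1}$ while the other two selected points $Y,Z$ are unchanged. Since the line $\overline{YZ}$ meets $\ell^*$ in exactly one of $P_i,P_{i+1}$, exactly one of $\{P_i,Y,Z\}$ and $\{P_{i+1},Y,Z\}$ is a line, so $P^*$ always flips parity; hence $H$ changes by $\pm1$ while $8-l$ is unchanged, giving $h(sx)=h(x)\pm1$. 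If instead the basepoint is $i$ or $i+1$, then $s$ merely relabels one point between the consecutive values $i$ and $i+1$; no comparison can flip (nothing lies strictly between $i$ and $i+1$), so $H$ is unchanged while $8-l$ changes by exactly $\pm1$, again yielding $h(sx)=h(x)\pm1$.

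With (iii) in hand I would prove (iv) by sandwiching. Running a reduced word for $x=wx_0$ through (iii) from $h(x_0)=0$ gives $h(x)\le\ell(w)=\rootht(x)$ pointwise. For the reverse I would compare total sums: the generating function $[2]_q[3]_q[5]_q[8]_q$ of Proposition \ref{prop:combo}(iii) gives $\sum_x\rootht(x)=240\cdot 7=1680$, while $\sum_x(8-l)=30\sum_{j=1}^8(8-j)=840$ (each basepoint occurs in $30$ packings by $S_8$-equivariance), and the involution $z_0\colon e\mapsto 9-e$ reverses the parity of every label, since relabelling by $9-e$ exchanges smaller and larger in each pair, so a point is odd in $x$ iff it is even in $z_0x$. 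Hence $H(x)+H(z_0x)=7$ and $\sum_xH(x)=\tfrac12\cdot 7\cdot 240=840$, so $\sum_x h(x)=1680=\sum_x\rootht(x)$; combined with $h\le\rootht$ this forces $h=\rootht$.

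The remaining parts follow quickly. For (v) I would use that $\Stab_{S_8}(x_0)\cong GL(3,2)$ is simple, hence contained in $A_8$, so the sign of any $w$ with $wx_0=x$ is a well-defined $A_8$-orbit invariant equal to $(-1)^{\rootht(x)}$, and the two signs label the two orbits. For (vi) I would reduce a signed transposition to an ordinary transposition via Proposition \ref{prop:signact}(iii), so every reflection flips the parity of $\rootht$, and write any pure sign change as a product of factors $(i,j)\,\overline{(i,j)}$, each a product of two reflections, so it preserves the parity. The parity-reversal property of $z_0$ together with $h=\rootht$ gives $\rootht(x)+\rootht(z_0x)=H(x)+H(z_0x)+(8-l)+(l-1)=7+7=14$, which is (vii). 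Finally (viii) is formal: the Bruhat order is the transitive closure of the relations $a\prec ta$ for transpositions $t$ with $\rootht(a)<\rootht(ta)$, and $z_0$ conjugates $t$ to $t'=z_0tz_0$ with $z_0(ta)=t'(z_0a)$; by (vii) the height inequality is reversed, so each generating cover maps to a cover in the opposite direction, and the order is reversed by $z_0$.
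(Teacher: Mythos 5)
Your proof is correct, and most of it tracks the paper's argument closely: the well-definedness of parity in (i), the two-case analysis of a simple transposition in (iii) (basepoint moved versus a single flipped pair forcing exactly one interior label to change parity), and the treatment of (v)--(viii) are all essentially the paper's own reasoning, with your transversal-counting lemma in (i) being a mild repackaging of the paper's direct intersection argument. The genuine divergence is in (iv). The paper deduces $h=\rootht$ locally: by the quasiparabolic structure (Theorem \ref{thm:summary}) every packing lies on a saturated chain $x_0=y_0<y_1<\cdots<y_{14}=x_1$ in which each step is a simple transposition raising $\rootht$ by $1$; since $h$ changes by $\pm1$ at each step and must climb from $0$ to $14$ in $14$ steps, every step is $+1$ and $h(y_i)=i=\rootht(y_i)$. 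You instead prove the pointwise inequality $h\le\rootht$ and then force equality by a global count, checking that $\sum_x h(x)=840+840=1680$ equals $\sum_x\rootht(x)$ as read off from the generating function $[2]_q[3]_q[5]_q[8]_q$ of Proposition \ref{prop:combo}(iii). Both arguments are sound; yours trades the chain-existence input from quasiparabolic theory for the rank generating function and the orbit count of basepoints, and has the small aesthetic cost of importing the $z_0$-duality $H(x)+H(z_0x)=7$ (which the paper only needs for (vii)) into the proof of (iv). One presentational caveat: your parity criterion ``$e$ is odd iff the set of smaller labels is a line'' should be stated as the content of (i) rather than assumed, but you do in fact derive its equivalence with the $f=a_3$ condition of Definition \ref{def:oddeven}, so nothing is missing.
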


\begin{proof}
In the notation of Definition \ref{def:oddeven}, the triple $\{e, a_3, b_3\}$ must intersect the triple containing
$a_1$ and $a_2$ in a label $f$ different from $e$, because there is a unique triple containing each of $\{e, a_1\}$ and 
$\{e, a_2\}$. Similarly, the line through $b_1$ and $b_2$ intersects $\{e, a_3, b_3\}$ in either $a_3$ or $b_3$.
This point of intersection must also be $f$, because the triples containing $\{a_1, a_2\}$ and $\{b_1, b_2\}$ must
intersect each other. The cases $f = a_3$ and $f = b_3$ are exclusive and exhaustive, which proves (i).

The assertions of the first sentence of (ii) can be proved by an exhaustive check. The assertions of the second
sentence follow from these and the definition of $h(x)$.
 
Suppose that we have $l \in \{i, i+1\}$ in the situation of (iii). The action of $s_i$ on $x$ does not change the
relative orders of the labels, so we have $H(sx) = H(x)$, but the basepoints of $x$ and $sx$ differ by $1$,
so we have $h(sx) = h(x) \pm 1$, as required. 

We may suppose from now on that $l \not\in \{i, i+1\}$. Let $e$ be the label of $x$ such that $\{e, i, i+1\}$ is 
a triple of $x$. The action of $s_i$ changes the parity of the label $e$, but it does not change
the basepoint. However, $s_i$ does not change the relative order of any of the other pairs $\{a_i, b_i\}$ appearing in 
Definition \ref{def:oddeven}, so $s_i$ does not change the parity of any other label. It follows that we
have $h(sx) = h(x) \pm 1$, which completes the proof of (iii).

By Theorem \ref{thm:summary} (i), every $y \in \qpx$ is a term in a sequence $$
x_0 = y_0 < y_1 < \ldots < y_{14} = x_1
,$$ where for each pair $(y_i, y_{i+1})$ there is a simple transposition $(j, j+1)$ such that $y_{i+1} = (j, j+1)y_i$
and $\rootht(y_{i+1}) = \rootht(y_i) + 1$. By (ii) and (iii), we must have $h(y_i) = i$, and this proves (iv).

Part (v) follows from (iii), (iv), and the transitivity of the $S_8$-action.

To prove (vi), recall that any reflection $r$ is conjugate in $W(D_8)$ to a simple reflection. It follows that $r$
has odd length, and the first assertion of (vi) follows from (iii) and (iv). The second assertion immediately reduces
to the case where $t$ is a double sign change. Proposition \ref{prop:signact} (i) proves that any double sign change 
$t_a t_b = (a, b) \overline{(a, b)}$ acts on $x$ in the same way as some double transposition $(a, b)(c, d)$. The
proof of (vi) now follows from (iii) and (iv).

The action of $z_0$ reverses the order of $\eight$. It follows from Definition \ref{def:oddeven} that $z_0$ sends
odd labels of $x$ to even labels of $z_0 x$, and vice versa, which implies that $H(z_0 x) + H(x) = 7$.
If $x$ has basepoint $l$, then $z_0 x$ has basepoint $9-l$. Since $(8-l) + (8-(9-l)) = 7$, it follows that
$h(z_0 x) + h(x) = 14$, and (vii) now follows from (iv).

To prove (viii), it suffices to check the statement for covering relations. Suppose that $r$ is a transposition
such that $rx = y$ and $\rootht(x) < \rootht(y)$. Because $z_0$ is an involution, it follows that $r' = z_0 r z_0$ is
also a transposition, and that $r'(z_0 y) = z_0x$. It follows from (v) that $\rootht(z_0 y) < \rootht(z_0 x)$, 
which proves that $z_0 y < z_0 x$, as required.
\end{proof}

\section{Signed labelled Fano planes}\label{sec:d7}

Some aspects of the packings of $PG(3, 2)$, such as coset equivalence and the kernels of the group actions, 
are easier to understand if we replace the action of $S_8$ on $\qpx$ by the action of the Weyl group 
$W(D_7) \cong 2^6 \rtimes S_7$ of type $D_7$. From this point of view, it is more convenient to parametrize the 
packings by $\seven$-labelled Fano planes with signed edges. We introduce and develop this notation in 
Section \ref{sec:d7}, and use it to calculate the kernels of the group actions in Theorem \ref{thm:faithful}.

Recall from Section \ref{sec:signed} that the group $W(D_8)$ acts on the $35$ pairs of signed monomials in the graded
component $R_4$ of the ring $R$. Now, instead of representing each signed monomial with its positive expression,
we will represent each signed monomial by an expression involving $x_8$; for example, we represent
$-x_{1458}$ by $-x_{1458}$, and we represent $-x_{2357}$ by $x_{1468}$. The collection of monomials $\mu(x)$
associated to the packing $x$ in Definition \ref{def:splitrep} can then be represented as a $\seven$-labelled
Fano plane with signed triples. For example, $-x_{1458}$ corresponds to the negative triple $-145$, and
$x_{1468}$ corresponds to the positive triple $+146$.

\begin{exa}\label{exa:d7sign}
Let $x_1$ be the maximal element in the partial order on packings. We have \begin{align*}
\mu(x_1) &= \{
x_{1234}, \ 
x_{1256}, \ 
x_{1278}, \ 
x_{1357}, \ 
x_{1368}, \ 
x_{1458}, \ 
x_{1467}
\} \\
&= \{
-x_{5678}, \ 
-x_{3478}, \ 
x_{1278}, \ 
-x_{2468}, \ 
x_{1368}, \ 
x_{1458}, \ 
-x_{2358}
\}.\end{align*} Figure \ref{fig:d7sign} shows this information in the form of a signed $\seven$-labelled
Fano plane. Positive triples are shown with solid lines, and negative triples with dashed lines.

\begin{figure}
\begin{center}
\begin{tikzpicture}[
mydot/.style={
  draw,
  circle,
  fill=black,
  inner sep=1.5pt}
]
\draw[dashed]
  (0,0) coordinate (A) --
  (3,0) coordinate (B) --
  ($ (A)!.5!(B) ! {sin(60)*2} ! 90:(B) $) coordinate (C) -- cycle;
\coordinate (O) at
  (barycentric cs:A=1,B=1,C=1);
\draw[dashed] (O) circle [radius=3*1.717/6];
\draw (C) -- ($ (A)!.5!(B) $) coordinate (LC); 
\draw (A) -- ($ (B)!.5!(C) $) coordinate (LA); 
\draw (B) -- ($ (C)!.5!(A) $) coordinate (LB); 
\foreach \Nodo in {A,B,C,O,LC,LA,LB}
  \node[mydot] at (\Nodo) {};    
  \node [left=0.1cm of A] {$5$};
  \node [right=0.1cm of B] {$6$};
  \node [above=0.1cm of C] {$2$};
  \node [right=0.1cm of LA] {$4$};
  \node [left=0.1cm of LB] {$3$};
  \node [below=0.1cm of LC] {$7$};
  \node [above right=0.1cm and 0.001cm of O] {$1$};
\end{tikzpicture}
       \caption{The signed $\seven$-labelled Fano plane corresponding to $x_1$}
\label{fig:d7sign}
\end{center}
\end{figure}
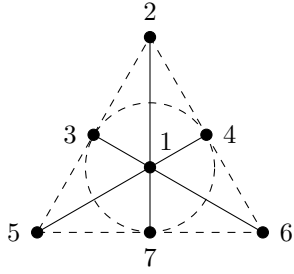
\end{exa}

\begin{defn}\label{def:paschop}
Let $\calf$ be the set of signed $\seven$-labelled Fano planes such that each vertex is incident with an
even number of negative edges. For $1 \leq i \leq 7$ and $F \in \calf$, we define $p_i(F)$ to be the signed 
$\seven$-labelled Fano plane obtained from $F$ by changing the sign of the four edges of $F$ that do not involve 
the label $i$.
\end{defn}

Note that if $F$ is the signed $\seven$-labelled Fano plane corresponding to $x_0$, then $p_1(F)$ is the signed 
$\seven$-labelled Fano plane shown in Figure \ref{fig:d7sign}.

\begin{prop}\label{prop:7label}
Consider the action of the Weyl group $W(D_7)$ on $\qpx$.
\begin{itemize}
\item[{\rm (i)}]{If $F$ is an element of $\calf$, then either $F$ has no negative edges, or we have $F = p_a(F')$,
where $a$ is a label of $F$ and $F' \in \calf$ has no negative edges.}
\item[{\rm (ii)}]{The permutation $w \in S_7 \leq W(D_7)$ acts on signed triples of $F \in \calf$ via $$
w(\pm \{a, b, c\}) = \pm \{ w(a), w(b), w(c) \}
,$$ with no change in sign.}
\item[{\rm (iii)}]{For $1 \leq i < j \leq 7$, we have $$
t_i t_j (\pm \{a, b, c\}) = \begin{cases}
\pm \{a, b, c\} & \text{if $|\{i, j\} \cap \{a, b, c\}|$ is even;}\\
\mp \{a, b, c\} & \text{if $|\{i, j\} \cap \{a, b, c\}|$ is odd.}
\end{cases}
$$}
\item[{\rm (iv)}]{If $\{a, b, c\}$ is a triple of $F$, then $t_a t_b$ negates the four edges not incident with $c$,
and fixes the other three.}
\item[{\rm (v)}]{A signed $\seven$-labelled Fano plane $F$ corresponds to a packing if and only if $F \in \calf$.}
\item[{\rm (vi)}]{Two packings are coset equivalent if and only if the corresponding elements of $\calf$ agree
modulo the signs of their edges.}
\end{itemize}
\end{prop}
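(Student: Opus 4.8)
The plan is to obtain parts (ii)--(iv) directly from the signed-monomial action of $W(D_7)$ on $R_4$, to prove (i) as a self-contained classification of admissible sign patterns, and then to bootstrap (v) and (vi) from these facts together with the transitivity and counting already available.

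First I would dispatch (ii) and (iii). A permutation $w \in S_7$ fixes the index $8$ and acts on the indeterminates without sign, so it sends $\pm x_{abc8}$ to $\pm x_{w(a)w(b)w(c)8}$ and both monomials still contain $x_8$; this is exactly (ii). For (iii), the double sign change $t_it_j$ with $i,j \le 7$ fixes $x_8$ and multiplies $x_{abc8}$ by $(-1)^{|\{i,j\}\cap\{a,b,c\}|}$, since $8 \notin \{i,j\}$, which is the asserted rule. Part (iv) then follows from (iii) by Fano-plane bookkeeping: given a line $\{a,b,c\}$, a line $\ell$ is negated by $t_at_b$ iff $\ell$ meets $\{a,b\}$ in exactly one point. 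The line $\{a,b,c\}$ and the two further lines through $c$ meet $\{a,b\}$ evenly and are fixed, while the two other lines through $a$ and the two other lines through $b$ each meet $\{a,b\}$ in a single point and are negated; these four negated lines are precisely the lines avoiding $c$. (This simultaneously identifies $t_at_b$ with the operator $p_c$ of Definition \ref{def:paschop}.)

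For (i) I would classify the admissible negative-edge sets $N$ intrinsically. The condition that every vertex lie on an even number of negative edges puts $N$ in the kernel $K$ of the point--line incidence map over $\F_2$. Counting incidences, $\sum_{\text{points}}(\#\text{negative edges at that point}) = 3|N|$ is even, forcing $|N|$ even; the cases $|N|\in\{2,6\}$ are excluded by producing a point of odd incidence, and for $|N|=4$ the complementary three lines must be concurrent (a triangle of three lines fails the parity test), so $N$ is the set of four lines missing their common point $a$. Hence $K$ consists of $\emptyset$ together with the seven sets $L_a^c$ of lines avoiding a vertex $a$, an elementary abelian group of order $8$ under symmetric difference; each nonempty element realises $F = p_a(F')$ with $F'$ all-positive, proving (i).

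Turning to (v), I would note that $x \mapsto F$ (the $x_8$-form of $\mu(x)$) is injective, since $\mu(x)$ determines the spreads, and $W(D_7)$-equivariant by (ii) and (iii). The image of $x_0$ is the all-positive plane $F_0 \in \calf$, and $W(D_7)$ preserves $\calf$: permutations preserve the even-incidence condition by (ii), while each $t_it_j$ acts as some $p_c$ by (iv) and so moves $N$ by an element of $K$, keeping $N \in K$. Since $W(D_7)$ acts transitively on $\qpx$ (Theorem \ref{thm:qppack}), the image is a single orbit inside $\calf$; as there are $7!/168 = 30$ labelled Fano planes each carrying exactly $|K| = 8$ admissible patterns, we have $|\calf| = 240 = |\qpx|$, so the injection is onto $\calf$, giving (v). The heart of the matter is (vi), which I would prove by showing that coset equivalence, the orbit relation of $T = \langle t_it_j\rangle$, and ``same unsigned plane'' all coincide. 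By (iii), $T$ preserves the unsigned plane, and by (iv) it contains every $p_c$ (take a line through $c$); since the $L_c^c$ span $K$, $T$ acts transitively on each fibre of the unsigned-plane map. On the algebraic side, Lemma \ref{lem:d8act}(ii) identifies each $t_it_j$ with a weight-two translation of $AG(6,2)\cong\lamm$, and I would check that these already generate the full translation group $\lamp$ (the even vectors supported on $\seven$ exhaust $\lamp$ modulo $U$), so that via $\slm$ the group $T$ acts transitively on each coset-equivalence class, namely the eight cosets of a fixed totally singular subspace of $\lamm$. I expect (vi) to be the main obstacle: the two transitivity statements are easy separately, but matching them hinges on the delicate point that the even sign changes available in $W(D_7)$ generate exactly the lattice $\lamp$ and nothing larger, so that the $T$-orbits have the correct common size $8$ on both the combinatorial and the algebraic side.
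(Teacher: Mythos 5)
Your proposal is correct and follows essentially the same route as the paper's proof: (ii)--(iv) are read off from the signed-monomial action, (i) is a classification of the admissible sign patterns (you use a global parity count where the paper does a local exhaustive check at a vertex with exactly two negative edges), (v) follows from the count $|\calf|=30\times 8=240$, and (vi) from the identification of double sign changes with translations via Lemma \ref{lem:d8act}(ii). The only real divergence is in the converse direction of (vi), where you verify explicitly that the weight-two sign changes available in $W(D_7)$ generate the full translation group $\lamp$ and hence act transitively on each coset-equivalence class, whereas the paper closes the argument more quickly by matching the two partitions into classes of size $8$ by cardinality; both arguments are valid.
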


\begin{proof}
Suppose $F \in \calf$, and that $F$ has a negative edge. It follows that there is some label $b$ that is incident
with precisely two negative edges. If two particular edges incident with $b$ are designated as negative, then an
exhaustive check proves that there are precisely two ways to assign signs to the remaining four edges so as to maintain
the parity conditions of $\calf$, and both of these assignments are of the form given in (i). Conversely, the set of
four negative edges not incident with a fixed vertex $a$ satisfies the parity conditions of $\calf$, proving (i).

Part (ii) is immediate from the definitions. It also follows from the definitions that the single sign change operator 
$t_i$ negates $\pm \{a, b, c\}$ if $i \in \{a, b, c\}$, and $t_i$ fixes $\pm \{a, b, c\}$ otherwise. Part (iii) follows 
from this, and (iv) follows from (iii).

The unsigned $\seven$-labelled Fano planes lie in $\calf$, because they correspond bijectively to the packings with
basepoint $8$. It follows from (i) and (iv) that the other elements of $\calf$ can be obtained from their unsigned
counterparts by applying a suitable double sign change. We conclude that all elements of $\calf$ correspond to packings.
Part (i) implies that $\calf$ has $240$ elements, because it has precisely eight times as many elements as there are 
unsigned $\seven$-labelled Fano planes. It follows that $\calf$ is in bijective correspondence with $\qpx$, proving
(v).

If $F, F' \in \calf$ agree modulo the signs of their edges, then it follows from (i) and (ii) that there is a pure sign
change $w \in W(D_7)$ such that $w(F) = F'$. Lemma \ref{lem:d8act} (ii) then proves that $F$ and $F'$ are coset equivalent.
Each coset equivalence class contains $2^3 = 8$ elements, and each unsigned $\seven$-labelled Fano plane has seven other
signed versions. This establishes the converse direction of (vi) and completes the proof.
\end{proof}

\begin{rmk}\label{rmk:d7sign}
\begin{itemize}
\item[(i)]{In the cases where $F \in \calf$ has four negative edges, the negative edges and the six labels incident with them
form an example of a {\it Pasch configuration}. This is a collection of six points and four lines, such that each point lies 
on two lines, and each line contains three points.}
\item[(ii)]{The signed $\seven$-labelled Fano planes are reminiscent of the {\it oriented} $\seven$-labelled Fano planes that
are used in the mnemonic for the structure constants of the octonions \cite[\S2.1]{baez02}. However, to the best of our
knowledge, there is no canonical way to turn a signed Fano plane into an oriented Fano plane.}
\end{itemize}
\end{rmk}

The centre of the group $W(D_n)$ is trivial if $n$ is odd, and is equal to $\{1, w_0\}$ when $n$ is even
\cite[\S6.3]{humphreys90}. Here, $w_0$ is the unique element of maximal length, which acts on the root 
system as the scalar $-1$ \cite[\S3.19]{humphreys90}. As a signed permutation, $w_0$ acts as an $n$-fold
sign change. 

The central involution $w_0 \in W(D_8)$ acts on the set $\Psi$ by negation, so it acts trivially on the
packings. We will prove that $w_0$ is the unique nontrivial element in the kernel of the action of
$W(D_8)$ on $\qpx$.

\begin{lemma}\label{lem:signkernel}
Let $N$ be the kernel of the action of $W(D_8)$ on the set of packings $\qpx$. If $w \in N$ acts as a pure
sign change when considered as a signed permutation, then we have $w \in Z(W(D_8)) = \{1, w_0\}$.
\end{lemma}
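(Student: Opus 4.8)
The plan is to transfer the problem to the affine space $AG(6,2) \cong \lamm$ and reduce it to a statement about totally singular subspaces. A pure sign change $w$ is determined by the even subset $T \subseteq \eight$ of coordinates whose signs it negates. By Lemma \ref{lem:d8act}(ii), $w$ acts on $\lamm$ as translation by the vector $v_T = \sum_{i \in T}\ep_i \in \lamp$ (taken modulo $U$). By Theorem \ref{thm:corresp}(i) and (iii), each packing corresponds under $\slm$ to a coset $c_0 + S$ (with $c_0 \in \lamm$) of one of the $30$ maximal totally singular subspaces $S \subseteq \lamp$, and translation by $v_T$ carries this coset to $c_0 + v_T + S$. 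Hence $w$ fixes a given packing if and only if $v_T \in S$, and therefore $w$ lies in the kernel $N$ if and only if $v_T$ lies in the intersection of all $30$ subspaces $S$.

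The heart of the argument is then to show that this intersection is trivial. A nonzero vector of $\lamp$ lies in $V_2$ or $V_4$, since $V_0 = \{0\}$; as $Q$ equals $1$ on $V_2$ and $0$ on $V_4$, the vectors of $V_2$ are nonsingular and hence lie in no totally singular subspace at all, while the $35$ vectors of $V_4$ are exactly the nonzero singular vectors. It thus suffices to rule out the singular case. Each of the $30$ subspaces is $3$-dimensional and so contains precisely $7$ nonzero vectors, all lying in $V_4$, giving $30 \cdot 7 = 210$ incident pairs $(S, v)$ with $0 \neq v \in S$. The group $A_8 \cong O_6^+(2)$ acts on $\lamp$ by permuting the $\ep_i$, thereby preserving $Q$, permuting the $30$ subspaces, and acting transitively on $V_4$ (the partitions of $\eight$ into two $4$-element sets). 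A double count then shows that each vector of $V_4$ lies in exactly $210/35 = 6$ of the subspaces; since $6 < 30$, no nonzero singular vector lies in all of them. Combined with the nonsingular case, this gives $\bigcap_S S = \{0\}$.

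It follows that $v_T = 0$ in $V = \F_2^8/U$, which occurs precisely when $T = \emptyset$ or $T = \eight$; the former gives $w = 1$, and the latter gives the $8$-fold sign change $w = w_0 \in Z(W(D_8))$. This is consistent, since $v_\eight$ is the all-ones vector, which is $0$ modulo $U$, so $w_0$ acts by the trivial translation, as it must. The one step needing genuine care is the triviality of $\bigcap_S S$; I expect the double count above to be the cleanest route, although one could instead observe that $\bigcap_S S$ is an $A_8$-submodule of $\lamp$ and invoke the irreducibility of the natural $O_6^+(2)$-module to conclude that it is either $\{0\}$ or all of $\lamp$, the latter being impossible.
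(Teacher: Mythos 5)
Your proof is correct, but it takes a genuinely different route from the paper's. The paper argues entirely on the Fano-plane side: it splits into cases according to whether the pure sign change has Hamming weight $2$, $4$, or $6$, and in each case uses the combinatorial rule of Proposition \ref{prop:signact}~(i) to exhibit (or compute) a specific packing that is moved — e.g.\ for weight $2$ the basepoint changes, for weight $6$ one multiplies by $w_0$ to reduce to weight $2$, and for weight $4$ one does a direct calculation on $x_0$. You instead pass through Theorem \ref{thm:corresp} to the $AG(6,2)$ picture, identify the pure sign change indexed by the even set $T$ with translation by $v_T$, and reduce the lemma to the single geometric statement that the $30$ maximal totally singular subspaces of $\lamp$ intersect trivially, which you establish by the double count $30\cdot 7/35=6<30$ (or, alternatively, by irreducibility of the natural $O_6^+(2)$-module). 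Your argument is uniform — no case split — and exposes the structural reason for the result, at the cost of relying on Theorem \ref{thm:corresp} and on the explicit form of the translation, which Lemma \ref{lem:d8act}~(ii) asserts only qualitatively; you should verify from Definition \ref{def:psi} that flipping the sign of coordinate $i$ moves $i$ across the partition $\{A\mid B\}$ and hence that $t_T$ really is translation by $v_T$ modulo $U$ (the identification of $\beta$ with $-\beta$ makes the case $8\in T$ harmless). The paper's proof is more elementary and self-contained but requires two explicit computations; yours is shorter conceptually and yields the extra fact that each singular point lies on exactly six of the thirty generators of the quadric.
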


\begin{proof}
By the preceding remarks, the element $w_0$ acts trivially on $\qpx$, so it remains to deal with the cases
where the Hamming weight of $w$ is $2$, $4$, or $6$.

Suppose that $w$ has Hamming weight $2$ and that it acts as the double sign change 
$t_it_j = (i, j)\overline{(i, j)}$. Let $x \in \qpx$ be a packing with basepoint $l$ containing a triple 
$\{e, i, j\}$. By Proposition \ref{prop:signact} (i), $w$ acts on $x$ as the permutation $(e, l)(i, j)$. It follows
that the basepoint of $wx$ is $e \ne l$, which is a contradiction to the assumption that $w$ is in the kernel.

The element $w$ cannot have Hamming weight $6$, or $ww_0$ would be an element of $N$ with Hamming weight $2$, 
which we have proved cannot happen.

Finally, suppose that $w$ acts as the quadruple sign change $t_at_bt_ct_d$. Let $x$ be a packing containing 
all four labels $\{a, b, c, d\}$ with the property that $\{a, b, c\}$ is a triple of $x$. To prove that 
$wx \ne x$, it suffices (by relabelling if necessary) to consider the case $\{a, b, c, d\} = \{1, 2, 3, 7\}$ 
and $x = x_0$. Direct calculation using Proposition \ref{prop:signact} (i) shows that the triples of the packing $wx_0$ 
are given by $$
wx_0 = (3 7) \overline{(3 7)} (1 2) \overline{(1 2)} x_0
= \{124, 157, 168, 258, 267, 456, 478\}
,$$ which has a basepoint of $3$ and therefore differs from $x_0$, completing the proof.
\end{proof}

\begin{theorem}\label{thm:faithful}
The kernel of the action of $W = W(D_8)$ on the $240$ packings of $PG(3, 2)$ is the centre of $W$, $Z(W) = \{1, w_0\}$.
When restricted to a proper parabolic subgroup of $W$, such as $S_8$, $W(D_7)$, $W(D_6)$, or $W(D_5)$, the 
action is faithful.
\end{theorem}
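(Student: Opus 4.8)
The plan is to compute the kernel $N$ of the $W(D_8)$-action on $\qpx$ exactly, and then obtain faithfulness on each proper parabolic subgroup by intersecting $N$ with it. First I would introduce the forgetful surjection $\pi\colon W(D_8)\to S_8$ that discards all signs; its kernel $K=\ker\pi$ is the group of pure sign changes, an elementary abelian $2$-group of order $2^7$. Since $w_0$ is the $8$-fold sign change it lies in $K$, and it acts trivially on $\qpx$ by the remarks preceding Lemma~\ref{lem:signkernel}, so $w_0\in N\cap K$. The two structural facts I would then record are: (a) $N\trianglelefteq W(D_8)$, so $\pi(N)$ is a normal subgroup of $S_8$ and hence $\pi(N)\in\{1,A_8,S_8\}$; and (b) $N\cap K=\{1,w_0\}$, which is exactly Lemma~\ref{lem:signkernel} combined with $w_0\in N\cap K$.

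Next I would feed these into the short exact sequence $1\to N\cap K\to N\xrightarrow{\pi}\pi(N)\to 1$, giving $|N|=|N\cap K|\,|\pi(N)|=2\,|\pi(N)|$. Because $W(D_8)$ acts transitively on the $240$ packings (Theorem~\ref{thm:qppack}~(i)), the faithful transitive quotient $W(D_8)/N$ has order divisible by $240$, so $240\mid[W(D_8):N]$. With $|W(D_8)|=2^7\cdot 8!=5160960$, the three candidates $\pi(N)\in\{1,A_8,S_8\}$ give $|N|\in\{2,40320,80640\}$ and indices $\{2580480,128,64\}$ respectively. As $240=2^4\cdot3\cdot5$ divides only $2580480$, both $\pi(N)=A_8$ and $\pi(N)=S_8$ are impossible; concretely, the factor $3$ cannot divide $128$ or $64$. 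Hence $\pi(N)=1$, so $N\subseteq K$ and therefore $N=N\cap K=\{1,w_0\}=Z(W(D_8))$, as claimed.

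Finally, for the faithfulness statement I would note that for any subgroup $H\le W(D_8)$ the kernel of the restricted action on $\qpx$ is $H\cap N=H\cap\{1,w_0\}$, so the restriction is faithful precisely when $w_0\notin H$. For a proper standard parabolic $W_I$ (with $I\subsetneq S$) this always holds: $w_0$ sends every positive root to a negative root, whereas every element of $W_I$ leaves positive at least one positive root, namely any one lying outside the sub-root-system spanned by $I$; such a root exists because $I\subsetneq S$. This covers $S_8\cong W(A_7)$, $W(D_7)$, $W(D_6)$ and $W(D_5)$ uniformly (for $S_8$ one may also simply observe that $w_0$ changes signs while $S_8$ does not). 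The genuinely delicate input is Lemma~\ref{lem:signkernel}, which pins down $N\cap K$; once that is in hand the remainder is a divisibility count, and I expect the only point needing care is confirming that transitivity forces $3\mid[W(D_8):N]$, which is exactly what eliminates the $A_8$ and $S_8$ cases.
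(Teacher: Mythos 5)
Your argument is correct, and it reaches the conclusion by a genuinely different route from the paper. The paper first proves that $W(D_7)$ acts faithfully, using the signed $\seven$-labelled Fano plane machinery of Proposition \ref{prop:7label} together with the faithfulness of $S_7$ on unsigned labelled Fano planes; it then observes that $N_8\cap W(D_7)=1$ forces $|N_8|$ to divide $[W(D_8):W(D_7)]=16$, so that $\pi_8(N_8)$ is a normal $2$-subgroup of $S_8$ and hence trivial, whence $N_8\le H_8$ and Lemma \ref{lem:signkernel} finishes. You instead work with $N$ directly: normality gives $\pi(N)\in\{1,A_8,S_8\}$, the exact sequence gives $|N|=2\,|\pi(N)|$, and transitivity on the $240$ packings forces $240\mid[W(D_8):N]$, which kills the $A_8$ and $S_8$ cases by the factor of $3$; this bypasses Proposition \ref{prop:7label} entirely and needs only Lemma \ref{lem:signkernel}, transitivity, and the classification of normal subgroups of $S_8$ (a fact the paper also invokes). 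Your treatment of the parabolic cases --- reducing to $w_0\notin W_I$ via the standard fact that every $w\in W_I$ fixes the sign of any positive root outside $\Phi_I$ --- is a valid substitute for the paper's appeal to Matsumoto's theorem. What the paper's longer route buys is a self-contained proof that $W(D_7)$ is faithful obtained before the $D_8$ computation (useful as combinatorial insight into the signed Fano plane model), whereas your route is shorter and makes the numerical reason for faithfulness ($3\nmid 128$) completely transparent.
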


\begin{proof}
By \cite[\S3.19]{humphreys90}, there is a reduced expression for the longest element $w_0 \in W(D_n)$ that involves
all the generators $S$. It follows (for example, by Matsumoto's Theorem \cite[Theorem 3.3.1]{geck00}) that every
reduced expression for $w_0$ involves all the generators, and therefore that no proper parabolic subgroup of
$W(D_n)$ can contain the longest element $w_0 \in W(D_n)$.

We have $W(D_n) \cong H_n \rtimes S_n$, where the elementary abelian group $H_n$ of order $2^{n-1}$ consists of
pure sign changes, and $S_n$ consists of permutations \cite[\S2]{humphreys90}. Let $\pi_n : W(D_n) \ra S_n$ be 
the surjective homomorphism with kernel $H_n$.

If $n \geq 5$ then the only proper nontrivial normal subgroup of $S_n$ is $A_n$, so any transitive action of
$S_n$ on a set of size bigger than $2$ must be faithful. In particular, $S_7$ acts faithfully on the set of
unsigned $\seven$-labelled Fano planes. 

Let $N_7$ be the kernel of the action of $W(D_7)$ on $\qpx$, and suppose that $w \in N_7$ is a nonidentity
element. Suppose that $w' = \pi_7(w)$ is a nonidentity element of $S_7$. By the previous paragraph, there exists
an unsigned $\seven$-labelled Fano plane $x$ such that $w' x \ne x$. If we now regard $x$ as a signed 
$\seven$-labelled Fano plane with no negative triples, it follows from Proposition \ref{prop:7label} (ii) and
(vi) that $w x \ne x$, because $w' x$, the underlying unsigned Fano plane of $w x$, differs from $x$. This implies 
that $N_7 \leq H_7$. Lemma \ref{lem:signkernel} now implies that $w$ is the longest element of $W(D_8)$, but this 
element does not lie in $W(D_7)$ by the argument of the first paragraph. This proves that $W(D_7)$ acts faithfully 
on $\qpx$.

Now let $N_8$ be the kernel of the action of $W(D_8)$ on $\qpx$. By the previous paragraph, the intersection
$N_8 \cap W(D_7)$ is trivial. It follows that $|W(D_7)|$ divides $|N_8 W(D_7)| = |N_8| \times |W(D_7)|$, which in 
turn divides $|W(D_8)|$. The index of $W(D_7)$ in $W(D_8)$ is $16$, so $N_8$ is a normal subgroup of $W(D_8)$ of
order dividing $16$. It follows that $\pi_8(N_8)$ is a normal $2$-subgroup of $S_8$, but the only such subgroup
of $S_8$ is the trivial group, so we have $N_8 \leq H_8$. By Lemma \ref{lem:signkernel}, we have 
$N_8 \leq Z(W(D_8))$, which proves the first assertion.

The second assertion follows from the first and the fact that no proper parabolic subgroup of $W(D_8)$ can contain
the longest element $w_0$.
\end{proof}

If we delete the label $7$ and the triples incident it from a signed $\seven$-labelled Fano plane, we obtain a signed
$\six$-labelled Pasch configuration. This gives a convenient notation from the point of view of the Weyl group actions of
types $D_6$ and $D_6 + A_1$.

\begin{exa}\label{exa:d6sign}
If we remove the label $7$ and its incident triples from the signed $\seven$-labelled Fano plane in Example \ref{exa:d7sign},
we are left with the signed $\six$-labelled Fano plane shown in Figure \ref{fig:d6sign}, where again we denote positive 
triples with solid lines and negative triples with dashed lines. This diagram is an encoding of the set of monomials $$
 \{
-x_{2468}, \ 
x_{1368}, \ 
x_{1458}, \ 
-x_{2358}
\}.$$ 
\begin{figure}
\begin{center}
\begin{tikzpicture}[
mydot/.style={
  draw,
  circle,
  fill=black,
  inner sep=1.5pt}
]
\draw[dashed]
  (3,0) coordinate (B) --
  ($ (0,0)!.5!(B) ! {sin(60)*2} ! 90:(B) $) coordinate (C) -- (0,0) coordinate (A);
\coordinate (O) at
  (barycentric cs:A=1,B=1,C=1);
\draw (A) -- ($ (B)!.5!(C) $) coordinate (LA); 
\draw (B) -- ($ (C)!.5!(A) $) coordinate (LB); 
\foreach \Nodo in {A,B,C,O,LA,LB}
  \node[mydot] at (\Nodo) {};    
  \node [left=0.1cm of A] {$5$};
  \node [right=0.1cm of B] {$6$};
  \node [above=0.1cm of C] {$2$};
  \node [right=0.1cm of LA] {$4$};
  \node [left=0.1cm of LB] {$3$};
  \node [above right=0.1cm and 0.001cm of O] {$1$};
\end{tikzpicture}
       \caption{Signed $\six$-labelled Pasch configuration corresponding to $x_1$}
\label{fig:d6sign}
\end{center}
\end{figure}
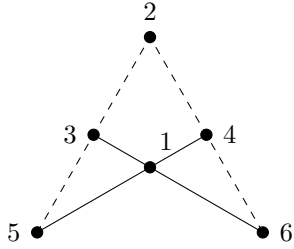
\end{exa}

\begin{rmk}\label{rmk:pasch}
\begin{itemize}
\item[{\rm (i)}]{ The original signed $\seven$-labelled Fano plane can be recovered from the signed $\six$-labelled
Pasch configuration by joining each of the three pairs of non-collinear points to $7$, and assigning signs to the 
triples consistent with the definition of $\calf$.}
\item[{\rm (ii)}]{Every signed $\six$-labelled Fano plane that represents a packing has an even number of negative 
triples. The eight ways to assign signs to the edges subject to this constraint correspond to the eight cosets of 
a maximal totally singular $3$-subspace of $AG(6, 2)$.}
\end{itemize}

\end{rmk}

The Weyl group $W(D_6 + A_1)$ contains three central elements of order $2$, corresponding to the elements
$z_1 = (7, 8)$, $z_2 = \overline{(7, 8)}$, and $z_3 = (7, 8)\overline{(7, 8)}$. The element $z_3$ has the
same effect on a signed $\six$-labelled Pasch configuration as the unique central involution in $W(D_6)$: it
changes the sign of every triple. The element $z_2$ acts on each triple in the Pasch configuration by 
replacing each triple $\{a, b, c\}$ by its complement $\six \backslash \{a, b, c\}$, without changing the
sign; this is a well-known operation on Pasch configurations called a {\it Pasch move}. The element $z_1$
acts replacing each triple by its complement and then changing the sign of the triple; doing this changes 
the height of a packing by $1$, because $(7, 8)$ is a simple transposition.

\begin{exa}\label{exa:paschmove}
Performing the Pasch move $\overline{(7, 8)}$ on the signed $\six$-labelled Fano plane in Example \ref{exa:d6sign} 
results in the monomials $$
 \{
x_{2467}, \ 
-x_{1367}, \ 
-x_{1457}, \ 
x_{2357}
\} = 
 \{
-x_{1358}, \ 
x_{2458}, \ 
x_{2368}, \ 
-x_{1468}
\}
.$$ The resulting signed $\six$-labelled Pasch configuration is shown in Figure \ref{fig:paschmove}. The configurations
in Figures \ref{fig:d6sign} and \ref{fig:paschmove} are dual to each other in the sense that positive (respectively, 
negative) edges in one configuration correspond to triangles with an even (respectively, odd) number of negative sides
in the other configuration.

\begin{figure}
\begin{center}
\begin{tikzpicture}[
mydot/.style={
  circle,
  fill=black,
  inner sep=1.5pt}
]
\draw[dashed]
  (3,0) coordinate (B) --
  ($ (0,0)!.5!(B) ! {sin(60)*2} ! 90:(B) $) coordinate (C) -- (0,0) coordinate (A);
\coordinate (O) at
  (barycentric cs:A=1,B=1,C=1);
\draw (A) -- ($ (B)!.5!(C) $) coordinate (LA); 
\draw (B) -- ($ (C)!.5!(A) $) coordinate (LB); 
\foreach \Nodo in {A,B,C,O,LA,LB}
  \node[mydot] at (\Nodo) {};    
  \node [left=0.1cm of A] {$6$};
  \node [right=0.1cm of B] {$5$};
  \node [above=0.1cm of C] {$1$};
  \node [right=0.1cm of LA] {$3$};
  \node [left=0.1cm of LB] {$4$};
  \node [above right=0.1cm and 0.001cm of O] {$2$};
\end{tikzpicture}
       \caption{The signed $\six$-labelled Pasch configuration of Example \ref{exa:paschmove}}
\label{fig:paschmove}
\end{center}
\end{figure}
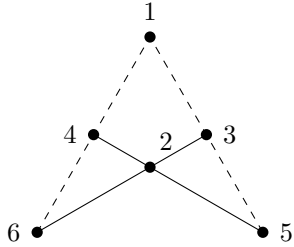
\end{exa}

\begin{rmk}\label{rmk:cubeedge}
The action of $W(D_6)$ on the packings contains, as a substructure, the action of $S_6$ on the $30$
unsigned Pasch configurations. This action is equivalent to the action of $S_6$ on the $30$ rotationally
distinct ways to colour the six opposite pairs edges of a cube with distinct colours, where $S_6$ acts by 
permuting the colours. The six points of the Pasch configuration correspond to the six opposite pairs of
edges, the four triples correspond to the four opposite pairs of vertices, and the three labels of a
triple specify which three colours meet at each vertex.

This action is not equivalent to the action of $S_6$ on the $30$ rotationally distinct face colourings of
a cube by six colours, even up to applying outer automorphisms of $S_6$. Although both actions have a point 
stabilizer that is abstractly isomorphic to $S_4$, the stabilizer of a face colouring contains $4$-cycles,
but the stabilizer of an edge colouring consists entirely of even permutations.

The unsigned Pasch configurations correspond to an interval in $(\qpx, \leq)$ that contains the minimal
element, and this makes the $30$ orbits of edge-coloured cubes into a partially ordered set. The $S_6$-action
extends to an $S_7$-action on the same set, corresponding to the action of $S_7$ on the $30$ unsigned
$\seven$-labelled Fano planes. The signed transposition $\overline{(7, 8)}$ acts on an unsigned Pasch 
configuration as a Pasch move, and this commutes with the action of $S_6$.
\end{rmk}

\section{Concluding remarks}\label{sec:conc}

A great deal is known about the combinatorics of the packings of $PG(3, 2)$. We will not attempt to summarize 
this here; instead, we recommend \cite{hirschfeld85}, \cite{mathon97overlarge}, \cite{mathon97seven}, and 
\cite{polster99} as starting points for further reading. We conclude by describing some properties of the 
packings of $PG(3, 2)$ that may not be familiar to experts on design theory or finite geometry. Most of 
these properties relate to the partial order on the packings.

\begin{rmk}\label{rmk:topology}
The poset $(\qpx, \leq)$ is interesting from the point of view of combinatorial topology. It follows from 
the results of \cite[\S6]{rains13} that the poset $\qpx$ is {\it Eulerian}, which means that every 
nontrivial closed interval in the poset contains the same number of elements of odd rank as even rank.
Furthermore, each interval $[x, y] \subseteq \qpx$ is lexicographically shellable, and if 
$d = \rootht(y) - \rootht(x) = d+2 \geq 2$, then the order complex of $[x, y]$ is homeomorphic to 
a $d$-sphere. The {\it order complex} of $[x, y]$ is the simplicial complex formed by the chains
in the open interval $(x, y)$, where the word ``chain'' has both its combinatorial and its topological
meaning.
\end{rmk}

\begin{rmk}\label{rmk:hecke}
The Weyl groups appearing in this paper have $q$-analogues known as Hecke algebras. Using the partial 
order $(\qpx, \leq)$, one can define actions of the Hecke algebras on the span of the $240$ packings, using
the formula given in \cite[Theorem 7.1]{rains13}.
\end{rmk}

\begin{rmk}\label{rmk:schmidt}
It is not a complete coincidence that the number of packings of $PG(3, 2)$ agrees with the number of
roots of type $E_8$. The {\it orthogonality graph} $\Gamma_1$ of the $E_8$ root system has vertices given
by the $120$ positive roots of type $E_8$, with two vertices being adjacent if they are orthogonal.
Following \cite[\S6.3]{gx5}, we can create a graph $\Gamma_2$ with vertices labelled by the $120$ elements 
in an $A_8$-orbit of $\qp_\Psi$, with adjacency given by disjointness. Although the graphs $\Gamma_1$ and 
$\Gamma_2$ are not isomorphic, Schmidt \cite{schmidt24} proved they are {\it quantum isomorphic} in the 
sense of \cite{atserias19}. The graph $\Gamma_2$, which is denoted by $G_2$ in 
\cite[Table 2.2]{mathon97overlarge}, may also be constructed directly from the packings, by taking
the vertices to be the elements of one $A_8$-orbit of packings, with two vertices being adjacent if they
have no common spread. It follows from Theorem \ref{thm:faithful} that the group $\text{Aut}(G_2)$ of 
order $32 \times 8!$ listed in \cite[Table 2.2]{mathon97overlarge} is $W'/Z(W)$, where $W \cong W(D_8)$.
Both the graphs $\Gamma_1$ and $\Gamma_2$ are strongly regular with parameters $(120, 63, 30, 36)$.
\end{rmk}

\begin{rmk}\label{rmk:residues}
The height function for $(\qpx, \leq)$ has a combinatorial interpretation: it is the number of ``residues"
associated to each element $\calr \in \qp_\Psi$. A {\it residue} of $\calr = \{\be_1, \be_2, \ldots, \be_8\}$
with respect to $\Psi$ is an element $\gamma \in \Psi$ with the property that $s_{\be_i}(\gamma)$ is a positive root 
for each reflection $s_{\be_i}$ corresponding to $\be_i \in \calr$. Residues are introduced and developed 
in \cite{gx6}.

We may also compute the height of a packing corresponding to an $\eight$-labelled Fano plane with basepoint $l$ 
by counting its residues with respect to the set of roots $\Psi' = \allroots_{2, 1} \ \dot\cup \ \allroots_{2, 3}$. It 
can be shown that an element of $\allroots_{2, 3}$ is a residue with respect to $\Psi'$ if and only if it corresponds 
to a basepoint $l' > l$, and that an element of $\allroots_{2, 1}$ is a residue with respect to $\Psi'$ if and only 
if it corresponds to a triple of non-collinear points where each point has a higher label than that of the point 
collinear with the other two. For example, the packing $x_0$ has no residues with respect to $\Psi'$, and the packing 
$x_1$ has $14$ residues with respect to $\Psi'$, given by $$
\{2, 3, 4, 5, 6, 7, 8\} \ \cup \ \{378, 468, 478, 567, 568, 578, 678\}
.$$
\end{rmk}

\begin{rmk}\label{rmk:games}
Because $x_0$ is the minimal element of $(\qpx, \leq)$, it follows that $x_0$ is the only 
element of $\qpx$ that has no residues with respect to $\Psi$. In the context of the $W(D_7)$-action, this information 
can be interpreted as the winning strategy for a two-player game, as follows.

It follows from Example \ref{exa:thcthn} that the packing $x_0$ is identified with the maximally separated 
subset $y_0$ of $\lamp$ given by 
\begin{align*}
\slp(y_0) = \{&
\{1278\ |\ 3456\}, \ 
\{1368\ |\ 2457\}, \ 
\{1458\ |\ 2367\}, \ 
\{2358\ |\ 1467\}, \ \\
& \{2468\ |\ 1357\}, \ 
\{3478\ |\ 1256\}, \ 
\{5678\ |\ 1234\}, \ 
\{12345678\ |\ \}\}
\end{align*}

For each of the eight partitions, we can take the half of the partition that does not contain $8$ and 
interpret it as an arrangement of counters on a horizontal strip with seven positions. Each position can 
hold at most one counter, and the total number of counters is even, so there are $64$ legal game 
positions. A legal move in the game consists of either (a) moving a counter to an empty space somewhere 
strictly to its left, or (b) removing any two counters. The winner is the player who removes the last two counters.

The eight positions shown in Figure \ref{fig:kernel}, which correspond to the eight partitions of $\slp(y_0)$, 
are the {\it kernel} of the game in the sense of \cite[\S51.7.3]{vonneumann07}. The kernel is the set of 
winning positions (assuming optimal play) for the previous player to move. Let $\calr = \{\be_1, \ldots, \be_8\}$
be the orthogonal subset of $\Psi$ that corresponds to $x_0$. If the game position corresponds to an element 
$\gamma \in \Psi \backslash \calr$, then it follows (because $\gamma$ is not a residue of $\calr$) that there exists at 
least one $\be_i \in \calr$ such that $s_{\be_i}(\gamma)$ is negative. The rules then ensure that it is possible to 
move from position $\gamma$ to position $\be_i$ in a single turn. Playing any such move constitutes a winning 
strategy, and playing any other move will allow the opponent to win.

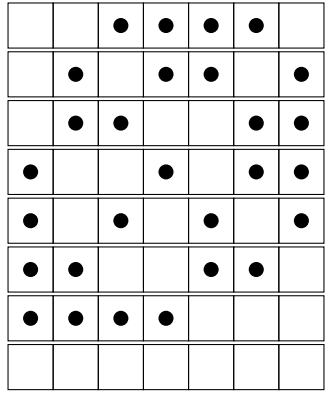
\begin{figure} 
\begin{center}
\begin{tikzpicture}
  \def\s{0.6} 
  \def\dotradius{0.1} 
  \foreach \i in {0,1,...,6} {
    \draw (\i*\s, 0) rectangle ++(\s, \s);
}
\foreach \i in {2,3,4,5} {
      \fill (\i*\s + 0.5*\s, 0.5*\s) circle (\dotradius);
  }
\end{tikzpicture}

\begin{tikzpicture}
  \def\s{0.6} 
  \def\dotradius{0.1} 
  \foreach \i in {0,1,...,6} {
    \draw (\i*\s, 0) rectangle ++(\s, \s);
}
\foreach \i in {1,3,4,6} {
      \fill (\i*\s + 0.5*\s, 0.5*\s) circle (\dotradius);
  }
\end{tikzpicture}

\begin{tikzpicture}
  \def\s{0.6} 
  \def\dotradius{0.1} 
  \foreach \i in {0,1,...,6} {
    \draw (\i*\s, 0) rectangle ++(\s, \s);
}
\foreach \i in {1,2,5,6} {
      \fill (\i*\s + 0.5*\s, 0.5*\s) circle (\dotradius);
  }
\end{tikzpicture}

\begin{tikzpicture}
  \def\s{0.6} 
  \def\dotradius{0.1} 
  \foreach \i in {0,1,...,6} {
    \draw (\i*\s, 0) rectangle ++(\s, \s);
}
\foreach \i in {0,3,5,6} {
      \fill (\i*\s + 0.5*\s, 0.5*\s) circle (\dotradius);
  }
\end{tikzpicture}

\begin{tikzpicture}
  \def\s{0.6} 
  \def\dotradius{0.1} 
  \foreach \i in {0,1,...,6} {
    \draw (\i*\s, 0) rectangle ++(\s, \s);
}
\foreach \i in {0,2,4,6} {
      \fill (\i*\s + 0.5*\s, 0.5*\s) circle (\dotradius);
  }
\end{tikzpicture}

\begin{tikzpicture}
  \def\s{0.6} 
  \def\dotradius{0.1} 
  \foreach \i in {0,1,...,6} {
    \draw (\i*\s, 0) rectangle ++(\s, \s);
}
\foreach \i in {0,1,4,5} {
      \fill (\i*\s + 0.5*\s, 0.5*\s) circle (\dotradius);
  }
\end{tikzpicture}

\begin{tikzpicture}
  \def\s{0.6} 
  \def\dotradius{0.1} 
  \foreach \i in {0,1,...,6} {
    \draw (\i*\s, 0) rectangle ++(\s, \s);
}
\foreach \i in {0,1,2,3} {
      \fill (\i*\s + 0.5*\s, 0.5*\s) circle (\dotradius);
  }
\end{tikzpicture}

\begin{tikzpicture}
  \def\s{0.6} 
  \def\dotradius{0.1} 
  \foreach \i in {0,1,...,6} {
    \draw (\i*\s, 0) rectangle ++(\s, \s);
}
\end{tikzpicture}
       \caption{Winning positions in the game of Remark \ref{rmk:games}}
\label{fig:kernel}
\end{center}
\end{figure}

A more concise way to describe this game is that the game positions are the weights of a $64$-dimensional half-spin 
representation for a simple Lie algebra of type $D_7$, a legal move consists of adding a positive root of type $D_7$
to produce another weight, and the winner is the first player to reach the highest weight.
\end{rmk}



\bibliographystyle{plain}
\bibliography{the240.bib}

@book{geck00,
  title={Characters of finite Coxeter groups and Iwahori--Hecke algebras},
  author={Geck, Meinolf and Pfeiffer, G{\"o}tz},
  year={2000},
  publisher={Oxford University Press}
}

@book{humphreys90,
  title={Reflection groups and {C}oxeter groups},
  author={Humphreys, James E.},
  number={29},
  series={Cambridge Studies in Advanced Mathematics},
  year={1990},
  publisher={Cambridge University Press}
}

@book{conway85,
  title={$\mathbb{ATLAS}$ of finite groups},
  author={Conway, J.H. and Curtis, R.T. and Norton, S.P. and Parker, R.A. and Wilson, R.A.},
  year={1985},
  publisher={Oxford University Press, Eynsham}
}

@article{rains13,
  title={Deformations of permutation representations of {C}oxeter groups},
  author={Rains, Eric M. and Vazirani, Monica J.},
  journal={Journal of Algebraic Combinatorics},
  volume={37},
  number={3},
  pages={455--502},
  year={2013},
  publisher={Springer}
}

@article{gx5,
  title={Orthogonal roots, {M}acdonald representations, and quasiparabolic sets},
  author={Green, R.M. and Xu, Tianyuan},
journal={Forum of Mathematics, Sigma},
note={(Published online; {\tt arXiv:2409.01948})},
year={2025}
}

@article{atserias19,
  title={Quantum and non-signalling graph isomorphisms},
  author={Atserias, Albert and Man{\v{c}}inska, Laura and Roberson, David E. and {\v{S}}{\'a}mal, Robert and Severini, Simone and Varvitsiotis, Antonios},
  journal={Journal of Combinatorial Theory, Series B},
  volume={136},
  pages={289--328},
  year={2019},
  publisher={Elsevier}
}

@article{schmidt24,
  title={Quantum isomorphic strongly regular graphs from the ${E}_8$ root system},
  author={Schmidt, Simon},
  journal={Algebraic Combinatorics},
  volume={7},
  number={2},
  pages={515--528},
  year={2024}
}

@book{vonneumann07,
  title={Theory of games and economic behavior: 60th anniversary commemorative edition},
  author={von Neumann, John and Morgenstern, Oskar},
  year={2007},
  publisher={Princeton University Press}
}

@article{baez02,
  title={The octonions},
  author={Baez, John C.},
  journal={Bulletin of the American Mathematical Society},
  volume={39},
  number={2},
  pages={145--205},
  year={2002}
}

@inproceedings{magaud22,
  title={Spreads and Packings of ${P}{G}(3, 2)$, Formally!},
  author={Magaud, Nicolas},
  booktitle={Automated Deduction in Geometry (ADG 2021)},
  year={2021}
}

@article{conwell1910,
  title={The 3-space ${P}{G}(3,2)$ and its group},
  author={Conwell, George M.},
  journal={Annals of Mathematics},
  volume={11},
  number={2},
  pages={60--76},
  year={1910}
}

@article{cole1922,
  title={Kirkman parades},
  author={Cole, F.N.},
  journal={Bulletin of the American Mathematical Society},
  volume={28},
  number={9},
  pages={435--437},
  year={1922}
}

@article{polster99,
  title={Yea why try her raw wet hat: A tour of the smallest projective space},
  author={Polster, Burkard},
  journal={The Mathematical Intelligencer},
  volume={21},
  number={2},
  pages={38--43},
  year={1999},
  publisher={Springer-Verlag New York}
}

@article{hall80,
  title={On identifying ${P}{G}(3, 2)$ and the complete 3-design on seven points},
  author={Hall, J.I.},
  journal={Annals of Discrete Mathematics},
  volume={7},
  pages={131--141},
  year={1980},
  publisher={Elsevier}
}

@article{wagner61,
  title={On collineation groups of projective spaces. {I}},
  author={Wagner, A.},
  journal={Mathematische Zeitschrift},
  volume={76},
  pages={411--426},
  year={1961},
  publisher={Springer}
}

@book{manivel01,
  title={Symmetric functions, {S}chubert polynomials and degeneracy loci},
  author={Manivel, Laurent},
  series={SMF/AMS Texts and Monographs},
  number={6},
  year={2001},
  publisher={American Mathematical Society}
}

@article{laisant1888,
  title={Sur la num{\'e}ration factorielle, application aux permutations},
  author={Laisant, C.-A.},
  journal={Bulletin de la Soci{\'e}t{\'e} Math{\'e}matique de France},
  volume={16},
  pages={176--183},
  year={1888}
}

@unpublished{gx6,
  title={Orthogonal roots, quantum {H}afnians, and generalized {R}othe diagrams},
  author={Green, R.M. and Xu, Tianyuan},
  year={2025},
  note={{\tt arXiv:2510.25041}}
}

@book{hirschfeld85,
  title={Finite projective spaces of three dimensions},
  author={Hirschfeld, J.W.P.},
  publisher={Oxford University Press},
  series={Oxford Mathematical Monographs},
  year={1985}
}

@article{mathon97seven,
  title={Partitions of sets of designs on seven, eight and nine points},
  author={Mathon, Rudolf and Street, Anne Penfold},
  journal={Journal of Statistical Planning and Inference},
  volume={58},
  number={1},
  pages={135--150},
  year={1997},
  publisher={Elsevier}
}

@article{mathon97overlarge,
  title={Overlarge sets and partial geometries},
  author={Mathon, Rudolf and Street, Anne Penfold},
  journal={Journal of Geometry},
  volume={60},
  number={1-2},
  pages={85--104},
  year={1997},
  publisher={Springer}
}

@book{brouwer22,
  title={Strongly regular graphs},
  author={Brouwer, A.E. and Van Maldeghem, H.},
  series={Encyclopedia of Mathematics and its Applications},
  number={182},
  year={2022},
  publisher={Cambridge University Press}
}

@unpublished{bolognini25,
  title={The {L}ehmer complex of a {B}ruhat interval},
  author={Bolognini, Davide and Sentinelli, Paolo},
  note={{\tt arXiv:2501.03037}},
  year={2025}
}

@article{huang15,
  title={On endomorphisms of alternating forms graph},
  author={Huang, Li-Ping and Huang, Jin-Qian and Zhao, Kang},
  journal={Discrete Mathematics},
  volume={338},
  number={3},
  pages={110--121},
  year={2015},
  publisher={Elsevier}
}

@book{polster98,
  title={A geometrical picture book},
  author={Polster, Burkard},
  year={1998},
  publisher={Springer Science \& Business Media}
}

\end{document}